\numberwithin{equation}{section}
\def\Q{\mathbb Q}
\def\Z{\mathbb Z}
\def\C{\mathbb C}
\newcommand{\Zz}{\mathbb{Z}}
\newcommand{\Cc}{\mathbb{C}}
\newcommand{\Ss}{\mathbb{S}}
\renewcommand{\epsilon}{\varepsilon}
\newcommand{\sign}{\operatorname{sign}}
\newcommand{\inte}{\operatorname{int}}
\newcommand{\lk}{\operatorname{\ell\mathit{k}}}
\newtheorem{thm}{Theorem}[section]
\newtheorem{propo}[thm]{Proposition}
\newtheorem{lemma}[thm]{Lemma}
\newtheorem{add}[thm]{Addendum}
\newtheorem{cor}[thm]{Corollary}
\theoremstyle{definition}
\newtheorem{rmk}[thm]{Remark}
\newtheorem{de}[thm]{Definition}
\newtheorem{ex}[thm]{Example}
\let\TN=T
\def\iTN{\mathring{\TN}}
\def\CT{\mathcal{T}}
\def\torus{\mathring{\mathcal{T}}}
\def\Log{\operatorname{Log}}
\def\ind{\operatorname{ind}}
\def\const{\operatorname{const}}
\def\sg{\operatorname{sg}}
\def\Ker{\operatorname{Ker}}
\def\trace{\operatorname{trace}}
\def\nl{\operatorname{null}}
\let\onto\twoheadrightarrow
\let\into\hookrightarrow
\def\RR{\mathbb{R}}
\def\ZZ{\mathbb{Z}}
\def\HH{\mathcal{H}}
\def\Zm{\ZZ/m}
\def\Zn{\ZZ/n}
\def\1{^{-1}}
\let\ARC\bar
\let\ARC\mathbf
\def\kk{\ARC{k}}
\def\ll{\ARC{l}}
\def\ee{\ARC{e}}
\def\ff{\ARC{f}}
\def\cc{\ARC{c}}
\def\aa{\ARC{a}}
\def\tomega{\tilde\omega}
\def\vrho{{\vect\rho}}
\let\defect\delta
\def\tlambda{\lambda^{\fam0\!w}}
\let\storus\tau
\def\btorus{\tilde\storus}
\def\KK{\mathcal K}
\def\DD{\mathcal D}
\def\vlk{\operatorname{\,\overline{\!\lk}}}
\def\iif{\<\,{\cdot}\,,\,{\cdot}\,\>}
\def\iref#1{(\ref{#1})}
\def\vect{}
\def\Dg:{\endgraf{\bf Dg:}\enspace\ignorespaces}
\def\3{\color{red}}
\def\4{\color{magenta}}
\def\5{\color{cyan}}
\def\7{\color{blue}}
\def\9{\color{green}}
\title{The signature of a splice}
\author{Alex Degtyarev}
\address{%
Department of Mathematics\\
Bilkent University\\
06800 Ankara, Turkey}
\email{degt@fen.bilkent.edu.tr}
\author{Vincent Florens}
\address{%
Laboratoire de Math\'{e}matiques et leurs applications, UMR CNRS 5142\\
Universit\'{e} de Pau et des Pays de l'Adour\\
Avenue de l'Universit\'{e}\\
 BP 1155 64013 Pau Cedex, France
}
\email{vincent.florens@univ-pau.fr}
\author{Ana G.\ Lecuona}
\address{%
Aix Marseille Universit\'{e}, CNRS, Centrale Marseille, I2M, UMR 7373, 13453 Marseille, France}
\email{ana.lecuona@univ-amu.fr}
\begin{document}

\begin{abstract}
We study the behavior of the signature of  colored links \cite{F,CF}   under the splice operation. We extend the construction to colored links in
integral homology spheres and show that the signature is almost additive,
with a correction term independent of the links.
We interpret this correction term as the signature of a
generalized Hopf link and give a simple closed formula to compute it.
\end{abstract}

\maketitle

\let\thefootnote\relax \footnote{The first author was partially supported by the JSPS grant L-15517 and T\"{U}B\.ITAK grant 114F325, the second  by the ANR Project Interlow
 JCJC-0097-01 and the third by Spanish GEOR MTM2011-22435. }

\section{Introduction}

The \emph{splice} of two links is an operation defined by Eisenbud and Neumann
in~\cite{EN}, which generalizes several other operations on links such as connected sum, cabling, and
disjoint union. The precise definition is given in Section~\ref{s:set-up}
(see Definition~\ref{def:splice}),
but the rough idea is as follows: the splice
of two links $K'\cup L'\subset\Ss'$ and $K''\cup L''\subset\Ss''$
along the
distinguished
components $K'$ and $K''$ is the link
$L'\cup L''$ in the $3$-manifold~$\Ss$
obtained by an appropriate gluing of the exteriors
of~$K'$ and~$K''$.
There has been much interest in understanding
the behavior of various
link invariants under the splice
operation. For example, the genus and the fiberability of a link are
additive, in a suitable sense, under splicing \cite{EN}. The behavior of the
Conway polynomial has been studied in \cite{C}, and more recently the
relation between the $L$-spaces in Heegaard--Floer homology and splicing has been
addressed in \cite{HL}.
The goal of this paper is   to obtain a similar (non-)additivity statement for
the multivariate signature of oriented colored links. As a consequence, we
show that the conventional univariate
Levine--Tristram signature of a splice depends on
the \emph{multivariate} signatures of the summands.

In Section~\ref{s:defs} we define the \emph{signature} of a colored link
in an integral homology sphere.
This is a natural generalisation of the multivariate
 extension of the Levine-Tristram signature
 of a link in the $3$-sphere, considered in
 \cite{F,CF}.
The principal result of the paper is Theorem~\ref{t:main'},
expressing the
signature of the splice of two links in terms of the signatures of the
summands.
We show that the signature is almost additive: there is a defect, but it
depends only on some combinatorial data of the links (linking numbers),
and not on the links themselves.
Geometrically, this defect term appears as the multivariate
signature of a certain generalized Hopf link, which is computed in
Theorem~\ref{t:hopf}.
At the end of Section~\ref{s:results},
we discuss a few applications of
Theorem~\ref{t:main'} and relate it to some previously known results: namely,
we compute the signature of a satellite knot (see Section~\ref{s:satellite}
and
Theorem~\ref{cor:satellite}) and that of an iterated torus link (see
Section~\ref{s:torus} and Theorem~\ref{cor:torus}).
More precisely, we reduce the computation to the signature of cables over the unknot.
 We also show that the multivariate signature of a link can be computed by means of
the conventional Levine--Tristram signature of an auxiliary link
(see Section~\ref{s.univariate} and Theorem~\ref{th.many-one}).

The paper is organized as follows. Section~\ref{s:results} is devoted to the detailed statement of main
 results, and the computation of the defect. In Section~\ref{s:prel}, we introduce the
 necessary  background material on twisted intersection forms and construct
 the signature of colored links in integral homology spheres.
The proofs of the main theorems are carried out in Section~\ref{s:proofs} and Section~\ref{s:Hopf}, where the signature of the generalized Hopf links is computed.

\subsection*{Acknowledgements}
We would like to thank S.~Orevkov, who brought the problem to our
attention. We are also grateful to the anonymous referees of this paper who
corrected a mistake in the original version of Corollary~\ref{tristram}
and a sign in Theorems~\ref{t:main'} and~\ref{t:hopf};
Example~\ref{ex.referee} was also suggested by a referee.
This work was partially
completed during the first and third authors' visits to
the University of Pau, supported by
 the CNRS,
 and the first author's visit to the Abdus Salam International
Centre for Theoretical Physics.

\section{Principal results}\label{s:results}

\subsection{The set-up}\label{s:set-up}
 A \emph{$\mu$-colored link}
is an oriented link~$L$ in an integral homology sphere~$\Ss$ equipped with a
surjective function $\pi_0(L)\onto\{1,\ldots,\mu\}$, referred to as the
\emph{coloring}.
The union of the components of~$L$ given the same color $i=1,\ldots,\mu$ is
denoted by $L_i$.

The \emph{signature} of a $\mu$-colored link~$L$ is a
certain $\Zz$-valued function $\sigma_L$ defined on the \emph{character
torus}
\begin{equation}
\CT^\mu:=\bigl\{(\omega_{1},\dots,\omega_{\mu})\in(S^1)^\mu\subset\Cc^\mu\bigm|
 \omega_{j}=\exp(2\pi i\theta_{j}),\ \theta_{j}\in\Q\bigr\},
\label{eq:torus}
\end{equation}
see Definition~\ref{sign} below for details.
We let $\CT^0:=\{1\}\in\Cc$.
Note that $\CT^\mu$ is an abelian group. If $\mu=1$,
 the link $L$ is monochrome and $\sigma_L$
 coincides with   the restriction  (to rational points)  of the Levine--Tristram signature \cite{b:Tri}
(whose definition in terms of Seifert form
extends naturally
to links in homology spheres).
 Given a character $\omega\in\CT^\mu$ and a vector $\lambda\in\Z^\mu$, we
use the common notation
$\omega^\lambda:=\prod_{i=1}^\mu\omega_i^{\lambda_i}$.

Often, the components of~$L$ are split naturally into two groups,
$L=L'\cup L''$, on which the coloring takes, respectively, $\mu'$ and $\mu''$
values, $\mu'+\mu''=\mu$. In this case, we regard $\sigma_L$ as a function of
two ``vector'' arguments $(\omega',\omega'')\in\CT^{\mu'}\times\CT^{\mu''}$.
We use this notation freely, hoping that each time its precise meaning is
clear from the context.

Clearly, in the definition of colored link, the precise set of colors is not
very important; sometimes, we also admit the color~$0$. As a special case, we
define a \emph{$(1,\mu)$-colored link}
$$
K\cup L=K\cup L_1\cup\ldots\cup L_\mu
$$
as a $(1+\mu)$-colored link
in which $K$ is the only component given the distinguished
color~$0$.
Here,  we assume $K$ connected;
this component, considered distinguished, plays a special role in a number
of operations.

In the following definition, for a $(1,\mu^*)$-colored link
$K^*\cup L^*\subset\Ss^*$, $*=\prime$ or $\prime\prime$, we denote by
$T^*\subset\Ss^*$ a
small tubular neighborhood of~$K^*$ disjoint from~$L^*$ and let
$m^*,\ell^*\subset\partial T^*$ be, respectively, its meridian and longitude.
(The latter is well defined as $\Ss^*$ is a homology sphere.)

\begin{de}\label{def:splice}
Given two $(1,\mu^*)$-colored links $K^*\cup L^*\subset\Ss^*$,
$*=\prime$ or $\prime\prime$, their \emph{splice}
is the $(\mu'+\mu'')$-colored link $L'\cup L''$ in the integral
homology sphere
$$
\Ss:=(\Ss'\smallsetminus\inte\TN')\cup_\varphi(\Ss''\smallsetminus\inte\TN''),
$$
where the gluing homeomorphism $\varphi\colon\partial\TN'\to\partial\TN''$
takes~$m'$ and $\ell'$ to~$\ell''$ and~$m''$, respectively.
\end{de}

\subsection{The signature formula}\label{s:notation}

Given a list (vector, etc.) $a_1,\ldots,a_i,\ldots,a_n$, the notation
$a_1,\ldots,\hat a_i,\ldots,a_n$ designates that the $i$-th
element (component, etc.) has been removed.
The complex conjugation is denoted by
$\eta\mapsto\bar\eta$.
The same notation applies to
the elements of the character torus~$\CT^\mu$, where we have
$\bar\omega=\omega\1$.

The \emph{linking number} of two disjoint oriented circles~$K$, $L$ in an
integral homology sphere~$\Ss$ is denoted by
$\lk_{\Ss}(K,L)$,
with $\Ss$ omitted whenever understood.
For a $(1,\mu)$-colored link $K\cup L$, we also define the \emph{linking
vector} $\vlk(K,L)=(\lambda_1,\ldots,\lambda_\mu)\in\Zz^\mu$, where
$\lambda_i:=\lk(K,L_i)$.

The \emph{index} of a real number $x$ is
 defined via $\ind(x):=\lfloor x\rfloor-\lfloor-x\rfloor\in\ZZ$.
The \emph{$\Log$-function}
$\Log\colon\CT^1\to[0,1)$ sends $\exp(2\pi it)$
to $t\in[0,1)$.
This
function extends to $\Log\colon\CT^\mu\to[0,\mu)$ via
$\Log\vect\omega=\sum_{i=1}^\mu\Log\omega_i$;
in other
words, we specialize each argument to the interval $[0,1)$ and add the
arguments \emph{as real numbers}
(rather than elements of $\CT^1$)
afterwards.
For any integral vector $\vect\lambda\in\Zz^\mu$, $\mu\ge0$, we define the \emph{defect function}
\begin{align*}
\textstyle
\defect_\lambda \colon\CT^\mu&\longrightarrow\Z \\
 \vect\omega&\longmapsto\textstyle
 \ind\bigl(\sum_{i=1}^\mu \lambda_i \Log\omega_i\bigr)-\sum_{i=1}^\mu \lambda_i \ind(\Log\omega_i).
\end{align*}
For short, if $\lambda_i=1$ for all $i$, we simply denote the defect $\defect$, and omit the subscript. The reader is referred to Figure~\ref{fig:delta} for a few examples of the defect function on $\CT^{2}$.

\begin{figure}\centering
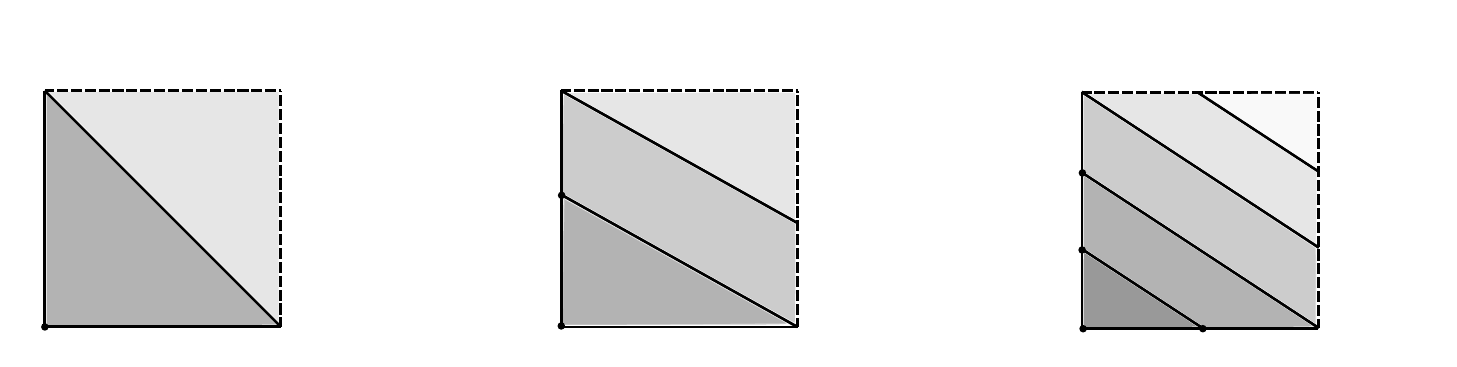
\caption{The values of three defect functions for $\omega\in\CT^{2}$. The defect is constant on the shaded regions and on the interior of the segments dividing the squares. The values of the defect in the extremal cases, $\omega_{1}=1$ or $\omega_{2}=1$, are given by the numbers on the left and bottom of the squares respectively.}
\label{fig:delta}
\end{figure}

The following statement is the principal result of the paper.

\begin{thm}\label{t:main'}
For $*=\prime$ or $\prime\prime$, consider a $(1,\mu^*)$-colored link
$K^*\cup L^*\subset\Ss^*$, and let $L\subset\Ss$ be the splice of the two
links.
For characters $\omega^*\in\CT^{\mu^*}$,
introduce the notation
$$
\lambda^*:=\vlk(K^*,L^*)\in\Zz^{\mu^*},\qquad
 \upsilon^*:=(\omega^*)^{\lambda^*}\in\CT^1.
$$
Then, assuming that $(\upsilon',\upsilon'')\ne(1,1)$, one
has
$$
\sigma_L(\vect\omega',\vect\omega'')=
 \sigma_{K'\cup L'}(\upsilon'',\vect\omega')+
 \sigma_{K''\cup L''}(\upsilon',\vect\omega'')
 +\defect_{\lambda'}(\vect\omega')\defect_{\lambda''}(\vect\omega'').
$$
\end{thm}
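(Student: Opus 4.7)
The plan is to realize the signatures on both sides as signatures of twisted intersection forms on appropriate 4-manifolds, decompose the splice 4-manifold along the splicing torus, and then apply a Novikov--Wall type additivity formula for twisted signatures. The key topological fact underlying the character bookkeeping is that under the gluing $\varphi$ of Definition~\ref{def:splice} the meridian $m'$ of $K'$ is identified with the longitude $\ell''$ of $K''$; since a longitude of $K^{*}$ carries the character value $(\omega^{*})^{\lambda^{*}}=\upsilon^{*}$ in the abelian cover determined by $\omega^{*}$, the character on $L=L'\cup L''$ in $\Ss$ extends across the splicing torus precisely when one assigns to the distinguished component $K^{*}$ of $K^{*}\cup L^{*}$ the value $\upsilon^{**}$ of the character coming from the opposite side. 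This explains the ``crossed'' arguments in the right-hand side of the theorem.

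First, for $*=\prime,\prime\prime$ I would choose a 4-manifold $W^{*}$ with boundary $\Ss^{*}$ together with an abelian cover compatible with $(\upsilon^{**},\omega^{*})$, realizing $\sigma_{K^{*}\cup L^{*}}(\upsilon^{**},\omega^{*})$ as the signature of the induced twisted intersection form (this is exactly the 4-dimensional description of $\sigma$ built in Section~\ref{s:prel}). Glue these manifolds to form $W:=W'\cup W''$ along a collar of the splicing torus $\partial\TN'\simeq\partial\TN''$; by the matching of characters just discussed, the local systems on the two pieces assemble into a local system on $W$ whose restriction to $\Ss=\partial W$ is the one computing $\sigma_{L}(\omega',\omega'')$. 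Hence $\sigma_{L}(\omega',\omega'')$ equals the twisted signature of $W$.

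Second, apply Wall's non-additivity formula (in its twisted version over the relevant local system) to the decomposition $W=W'\cup W''$. This yields
\[
 \sigma(W)=\sigma(W')+\sigma(W'')+\tau,
\]
where $\tau$ is a Maslov-type triple index in the twisted first homology of the splicing torus, determined by three Lagrangian subspaces: the two kernels coming from the meridional/longitudinal fillings of $T'$, $T''$ in each side, and the kernel determined by the gluing $\varphi$. Because this correction involves only the torus and the characters $\upsilon',\upsilon''$, it is visibly independent of the links $L',L''$. The assumption $(\upsilon',\upsilon'')\neq(1,1)$ is precisely what makes the relevant twisted homology of the torus two-dimensional, so that the Lagrangians are lines and the Maslov index is well-defined; without it, one would have to absorb lower-dimensional contributions into nullities.

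Finally, since $\tau$ depends only on the torus data, it must equal the signature of a two-colored link in $S^{3}$ whose complement carries the same local system: this is exactly the generalized Hopf link attached to the splice diagram with multiplicities $\lambda',\lambda''$. Invoking Theorem~\ref{t:hopf} then identifies $\tau$ with $\defect_{\lambda'}(\omega')\defect_{\lambda''}(\omega'')$, completing the argument. The main obstacle I expect is the explicit identification of the Wall triple index with the combinatorial defect product: one must choose a consistent basis for the twisted $H_{1}$ of the torus, trace through the three Lagrangians associated with $m'$, $\ell'$, $m''$, $\ell''$ (keeping track of the integer multiplicities $\lambda',\lambda''$), and match the resulting rank-1 Maslov computation with the formula for $\defect_{\lambda^{*}}$. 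Packaging this as the signature of the generalized Hopf link, via Theorem~\ref{t:hopf}, is what makes this clean, and is the reason the Hopf computation is isolated as an independent result.
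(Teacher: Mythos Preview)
Your identification of the defect with the Wall--Maslov correction is backwards. By Lemma~\ref{lem:torus}, a \emph{nontrivial} character on a $2$-torus kills the twisted $H_1$; since the meridian and longitude of the splicing torus carry the values $\upsilon',\upsilon''$, the hypothesis $(\upsilon',\upsilon'')\neq(1,1)$ forces $H_1^\chi(\partial T')=0$, not $\dim=2$ as you assert. Consequently the Wall non-additivity term \emph{vanishes} under the hypothesis, and $\defect_{\lambda'}(\omega')\defect_{\lambda''}(\omega'')$ cannot be extracted as a Maslov index of Lagrangians in that space. (It is precisely when $(\upsilon',\upsilon'')=(1,1)$ that the twisted torus homology becomes $2$-dimensional and a genuine Wall defect appears; this is the excluded case treated separately in Addendum~\ref{add:L=0} and in~\cite{DFL}.) Your gluing ``along a collar of the splicing torus'' is also too vague to yield a $4$-manifold with boundary~$\Ss$: the two solid tori $T'$, $T''$ are glued with the meridian/longitude swap, so naively identifying them gives an $S^3$ at the seam, not the splice.

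In the paper's argument the defect enters from a different place. One takes $(N',D'\cup F')$ with $D'$ a disk bounding~$K'$ (Lemma~\ref{l:disc}), excises a $4$-ball $B'\cong D'\times B^2$, and glues $N'\smallsetminus B'$ to all of~$N''$ along a solid torus. The $m'$ transverse punctures that $F'$ leaves on~$\partial B'$ must be capped in~$N''$ by $m'$ parallel copies of a disk bounding~$K''$; this amounts to an $(m',0)$-cabling of~$K''$. By Lemma~\ref{cable} (which in turn rests on Lemma~\ref{lem.new} and Theorem~\ref{t:hopf}), passing from $K''$ to its cable shifts the signature by exactly $\defect_{\lambda'}(\omega')\defect_{\lambda''}(\omega'')$, this being the twisted signature of the generalized Hopf link sitting inside the ball~$B''\subset N''$. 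Wall's theorem is invoked twice in the proof, but in both applications the hypothesis makes the correction zero; its role is to guarantee exact additivity across the solid-torus gluings, not to produce the defect.
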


\begin{rmk}
Eisenbud and Neumann \cite[Theorem 5.2]{EN} showed that the Alexander polynomial is multiplicative under the splice. For a $\mu$-colored link $L$, we denote $\Delta_L(t_1,\dots,t_\mu)$ the Alexander polynomial of $L$.
Similar to
Theorem \ref{t:main'},
 let $t^* = \prod_{i=1}^{\mu^*} (t^*_i)^{\lambda*_i}$.
One has
 $$
 \Delta_{L} (t'_1,\dots,t'_{\mu'},t''_1,\dots,t''_{\mu''})= \Delta_{K' \cup L'}(t'',t'_1,\dots,t'_{\mu'}) \cdot \Delta_{K''\cup L''}(t',t''_1,\dots,t''_{\mu''}),
 $$
 unless  $\mu'=0$ (ie. $L'=K'$ is a knot) and $\lambda''=0$, in which case
 $$
 \Delta_{L} (t''_1,\dots,t''_{\mu''})= \Delta_{L'' \setminus K''}(t''_1,\dots,t''_{\mu''}).
 $$
 Note that this formula were refined by Cimasoni \cite{C} for the Conway potential function.
Moreover, in relation with the signature of a colored link, one may consider the \emph{nullity}, related to the rank of the twisted first homology of the link complement. This nullity is also additive under the splice operation, in the suitable sense. Detailed statements can be found in \cite{DFL}.
\end{rmk}

\begin{ex}
Consider two copies $K' \cup L'$ and $K'' \cup L''$ of the (1,1)-colored generalized Hopf link $H_{1,2}$, see Section \ref{hopf}, where $K'$ and $K''$ are the single
components. Then, $L=L'\cup L''=H_{2,2}$ is a (1,1)-colored link, and for $\omega\in\CT^1\smallsetminus\{\pm 1\}$, we show by using C-complexes that
 $$\sigma_{L}(\omega, \omega)= \sigma_{K' \cup L'}(\omega^2,\omega) +
 \sigma_{K'' \cup L''}(\omega^2,\omega) + \delta_{(2)}(\omega) \delta_{(2)}(\omega)=
 0 + 0 + \delta_{(2)}(\omega) \delta_{(2)}(\omega).$$
This illustrates trivially that a defect appears.
\end{ex}

\begin{ex}\label{ex.referee}
For the reader convenience we add the following example. Notice the use of the formula in Theorem~\ref{t:main'} when $\omega_{i}=1$ (cf. Remark~\ref{r:one}). Let $K'\cup L'$ be the (2,4)-torus link and $K''\cup L''$ be the (4,2)-cable over the unknot with the core retained (cf. Section~\ref{s:torus}). Then, the splice of these two links along the components $K'$ and $K''$ is the (3,6)-torus link, which we shall denote $L$.

\begin{figure}\centering
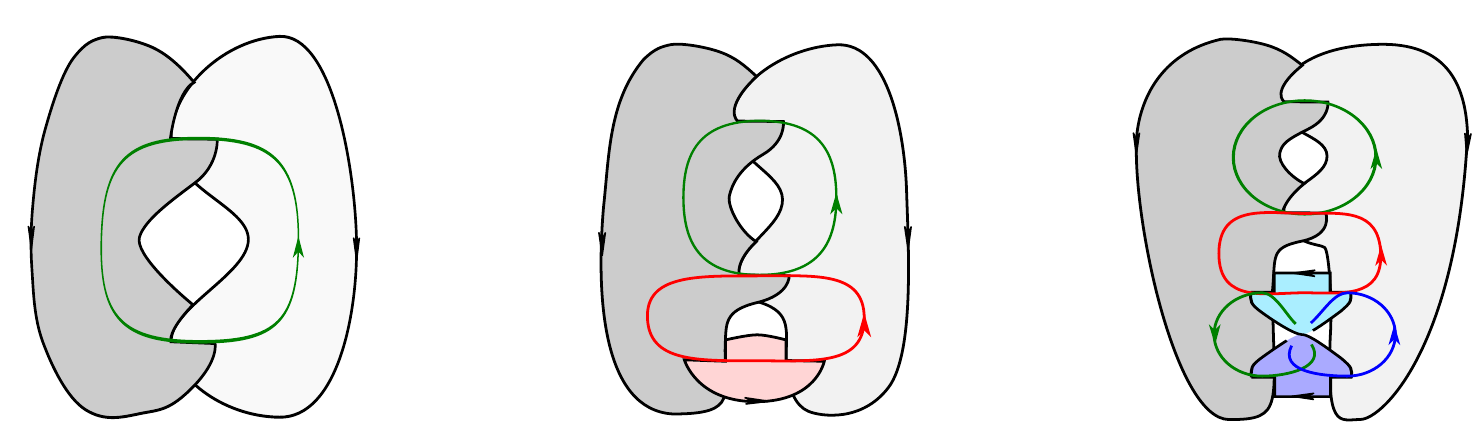
\caption{The leftmost link is the (2,4)-torus link, depicted as the boundary of a C-complex with rank 1 first homology. In the middle, the (4,2)-cable over the unknot with the core retained, bounding a rank 2 C-complex. The last diagram is the splice of the two preceding ones along $K'$ and $K''$. It represents the (3,6)-torus link.}
\label{f:complex}
\end{figure}

In the notation of Theorem~\ref{t:main'}, we
have $\lambda'=2$ and $\lambda''=(1,1)$.
For the $C$-complexes bounded by these three links one can take those
depicted in Figure~\ref{f:complex}.
To simplify the resulting Hermitian matrices~$H$, we re-denote by
$t_0,t_1,\ldots$ their arguments (in the order listed) and,
for an index set~$I$, introduce the shortcut
$\pi_I:=1+\prod_{i\in I}(-t_i)$. Then
\begin{gather*}
   H_{K'\cup L'}(\xi',\omega')=-\bar\pi_0\bar\pi_1\pi_{01},\\
   H_{K''\cup L''}(\xi'',\omega_{1}'',\omega_{2}'')=
     \bar\pi_0\bar\pi_1\bar\pi_2
     \begin{pmatrix}
       -\pi_0\pi_{12} & t_1t_2\pi_0 \\
       \pi_0 & -\pi_{012}
     \end{pmatrix},\\
   H_{L'\cup L''}(\omega',\omega_{1}'',\omega_{2}'')=
     \bar\pi_0\bar\pi_1\bar\pi_2
     \begin{pmatrix}
       -\pi_0\pi_{12} & t_1t_2\pi_0 & 0 & 0 \\
       \pi_0 & -\pi_{012} & t_0t_2\pi_1 & t_0\pi_2 \\
       0 & \pi_1 & -\pi_1\pi_{02} & -t_0\pi_1\pi_2 \\
       0 & t_1\pi_2 & \pi_1\pi_2 & -\pi_2\pi_{01}
     \end{pmatrix},
\end{gather*}
so that, up to units and factors of the form $\pi_i$, $i=0,1,\ldots$, the Alexander polynomials are
\begin{equation*}
   \Delta_{K'\cup L'}=\pi_{01},\qquad
   \Delta_{K''\cup L''}=t_0t_1^2t_2^2-1,\qquad
   \Delta_{L'\cup L''}=\pi_{012}(t_0t_1t_2+1)^2.
\end{equation*}
The computation of the signature of these matrices is straightforward: on the
respective \emph{open} tori, they are the piecewise constant functions given
by the following tables:

\vskip2mm
\begin{center}
\begin{tabular}{| c| ccccc| }
 \hline
$\Log\xi'+\Log\omega'$ &  & $1/2$ &  & $3/2$ &   \\
 \hline 
$\sigma_{K'\cup L'}(\xi',\omega')$  & $1$ & $0$ & $-1$ & $0$ & $1$   \\ 
 \hline
 \end{tabular}
\end{center}

\begin{center}
\begin{tabular}{| c| ccccccccc| }
 \hline
$\Log\xi''+2\Log\omega''$ &  & $1$ &  & $2$ & & $3$ & & $4$ &   \\
 \hline 
$\sigma_{K''\cup L''}(\xi'',\omega'')$  & $2$ & $1$ & $0$ & $-1$ & $-2$ & $-1$ & $0$ & $1$ & $2$   \\ 
 \hline
 \end{tabular}
\end{center}

\begin{center}
\begin{tabular}{| c| ccccccccc| }
 \hline
$\Log\omega'+\Log\omega''$ &  & $1/2$ &  & $1$ & & $2$ & & $5/2$ &   \\
 \hline 
$\sigma_{L'\cup L''}(\omega',\omega'')$  & $4$ & $2$ & $0$ & $-1$ & $-2$ & $-1$ & $0$ & $2$ & $4$   \\ 
 \hline
 \end{tabular}
\end{center}
\vskip2mm
\noindent
Note, however, that $L'$ is the unknot
and $L''$ is homeomorphic to $K'\cup L'$; hence,
\begin{equation*}
  \sigma_{K'\cup L'}(1,\omega')=0,\qquad
  \sigma_{K''\cup L''}(1,\omega'')=\sigma_{K'\cup L'}(\omega_1'',\omega_2'').
\end{equation*}
Now, it is immediate that the identity
$$
\sigma_L(\omega',\omega_1'',\omega_2'')=
 \sigma_{K'\cup L'}(\omega_1''\omega_2'',\vect\omega')+
 \sigma_{K''\cup L''}(\omega^{\prime2},\omega_1'',\omega_2'')
 +\defect_{(2)}(\omega')\defect_{(1,1)}(\omega_1'',\omega_2'')
$$
given by Theorem~\ref{t:main'} holds whenever
$\omega^{\prime2}\ne1$ or $\omega_1''\omega_2''\ne1$.
(It suffices to compare the values at all
triples of 8-th roots of unity.) If $\omega^{\prime2}=\omega_1''\omega_2''=1$,
we obtain an
extra discrepancy of~$1$; this phenomenon will be explained in~\cite{DFL}.
\end{ex}

As an immediate consequence of Theorem \ref{t:main'}, we see that
the Levine--Tristram signature of a splice cannot be expressed
in terms of
the Levine--Tristram signature
of its summands:
in general, the multivariate
extension is required.

\begin{cor} \label{tristram}
Let $L$ be the
splice of
$(1,1)$-colored links $K' \cup L'$ and $K'' \cup L''$,
and denote
$\lambda'=\lk(K',L')$ and $\lambda''=\lk(K'',L'')$.   Consider $L$ as a $1$-colored link. Then, for
 a character $\xi \in \CT^{1}$ such that
$\xi^{\operatorname{g.c.d.}(\lambda',\lambda'')}\ne1$,
one has
$$ \sigma_L(\xi)= \sigma_{K'\cup L'}(\xi^{\lambda''},\xi) +  \sigma_{K''\cup L''}(\xi^{\lambda'},\xi) - \lambda' \lambda'' + \delta_{\lambda'}(\xi)\delta_{\lambda''}(\xi) ,$$
where $\sigma_L(\xi)$ is the Levine--Tristram signature of  $L$.
\end{cor}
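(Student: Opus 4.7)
The plan is to apply Theorem~\ref{t:main'} with $\mu'=\mu''=1$ and $\omega'=\omega''=\xi$, and then convert the resulting bivariate signature of the splice $L=L'\cup L''$ (viewed as a $(1,1)$-colored link) into its univariate Levine--Tristram signature.

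In this specialized setting the linking vectors $\lambda',\lambda''$ are integers and the auxiliary characters of Theorem~\ref{t:main'} become $\upsilon'=\xi^{\lambda'}$, $\upsilon''=\xi^{\lambda''}$. The hypothesis $(\upsilon',\upsilon'')\ne(1,1)$ reads $\xi^{\lambda'}\ne 1$ or $\xi^{\lambda''}\ne 1$, and this is in turn equivalent to $\xi^{\gcd(\lambda',\lambda'')}\ne 1$, since the common roots of $\xi^{\lambda'}=1$ and $\xi^{\lambda''}=1$ are precisely the $\gcd(\lambda',\lambda'')$-th roots of unity. Under this hypothesis, Theorem~\ref{t:main'} yields
$$
\sigma_{L}^{\mathrm{bi}}(\xi,\xi)=\sigma_{K'\cup L'}(\xi^{\lambda''},\xi)+\sigma_{K''\cup L''}(\xi^{\lambda'},\xi)+\defect_{\lambda'}(\xi)\defect_{\lambda''}(\xi),
$$
where $\sigma_L^{\mathrm{bi}}$ denotes the signature of $L$ regarded as a $(1,1)$-colored link, with $L'$ and $L''$ carrying distinct colors.

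The only remaining task is to relate $\sigma_L^{\mathrm{bi}}(\xi,\xi)$ to the univariate Levine--Tristram signature $\sigma_L(\xi)$. I would use the color-collapsing identity
$$
\sigma_L(\xi)=\sigma_L^{\mathrm{bi}}(\xi,\xi)-\lk_\Ss(L',L''),\qquad\xi\ne 1,
$$
which compares a C-complex Hermitian form for the $(1,1)$-coloring of $L$ with a Seifert form for the same link viewed monochromatically. Combined with the classical additivity formula $\lk_\Ss(L',L'')=\lambda'\lambda''$ for linking numbers under splicing, this converts the previous display into the statement of the corollary. The main obstacle is a clean justification of the color-collapsing identity: while the splice linking-number formula is standard, the precise correction term between the bivariate signature at the diagonal and the monochrome Levine--Tristram signature must be extracted from the C-complex machinery of Section~\ref{s:prel}, with attention to the correct sign and the excluded locus $\xi=1$. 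I expect it to emerge either as a direct consequence of that machinery or as a specialization of Theorem~\ref{th.many-one}.
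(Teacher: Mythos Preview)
Your proposal is correct and follows essentially the same route as the paper's own proof: apply Theorem~\ref{t:main'} to the $(1,1)$-colored link $L'\cup L''$ at the diagonal character $(\xi,\xi)$, then pass to the monochrome signature and use the splice linking-number formula $\lk(L',L'')=\lambda'\lambda''$ from \cite{EN}. The color-collapsing identity you identify as the only remaining obstacle is precisely Proposition~\ref{prop:coloring}, so no additional work with $C$-complexes or Theorem~\ref{th.many-one} is needed.
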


\begin{proof}
Consider the $2$-coloring on $L$ given by the splitting $L' \cup L''$.
We have
$\sigma_L(\xi,\xi)= \sigma_{K'\cup L'}(\xi^{\lambda''},\xi) +  \sigma_{K''\cup L''}(\xi^{\lambda'},\xi)+ \delta_{\lambda'}(\xi)\delta_{\lambda''}(\xi)$
by Theorem \ref{t:main'}.
On the other hand,
$\sigma_L(\xi)=\sigma_L(\xi,\xi) - \lk(L',L'')$, see Proposition \ref{prop:coloring}. By \cite[Proposition 1.2]{EN}, $\lk(L',L'')=\lambda' \lambda''$.
\end{proof}

 Theorem \ref{t:main'} is proved in Section~\ref{proof:main}.
In the special case $L'=\varnothing$, it takes the
following stronger form (we do not require that $\upsilon''\ne1$);
it is proved in Section~\ref{proof:L=0}.

\begin{add}\label{add:L=0}
Let $L\subset\Ss$ be the splice of a $(1,0)$-colored link
$K'\subset\Ss'$ and a $(1,\mu'')$-colored link $K''\cup L''\subset\Ss''$,
and let $\lambda'':=\vlk(K'',L'')$.
Then, for any character $\vect\omega\in\CT^{\mu''}$, one has
$$
\sigma_L(\vect\omega)=\sigma_{K'}
\big(\omega^{\lambda''}\big)
 +\sigma_{L''}(\vect\omega).
$$
\end{add}

\begin{rmk}
The assumption $(\upsilon',\upsilon'')\neq (1,1)$ in Theorem~\ref{t:main'} is
essential. If $\upsilon'=\upsilon''=1$, the expression for the signature
acquires an extra correction term, which can be proved to take values in $[\![ -2, 2 ]\!]$.
In many cases, this term can be computed algorithmically, and simple examples
show that typically it does not vanish. Indeed, consider two copies of the Whitehead link $K' \cup L'$ and $K'' \cup L''$. If $\omega=e^{i \pi /3}$, then $\sigma_{L}(\omega,\omega)= -1$, but $\sigma_{K' \cup L'}(1,\omega)+\sigma_{K'' \cup L''}(1,\omega)+ \delta(1)=0$ and there is a non-zero extra term.
(Addendum~\ref{add:L=0} states that the extra term does vanish whenever one
of the links~$L'$, $L''$ is empty.)
The general computation of this extra term, related to \emph{linkage} invariants
(see, e.g., \cite{b:Mu}), is addressed in a forthcoming paper \cite{DFL}.
\end{rmk}

\begin{rmk}
We
expect that the conclusion of Theorem~\ref{t:main'} would still hold without
the assumption that the characters should be rational. In fact, all ingredients of
the proof would work once recast to the language of local systems, and the main
difficulty is the very definition of the signature in homology spheres,
 where the link does not need to bound a surface and the approach
of~\cite{CF} does not apply.
(If
all links are in~$S^3$, an alternative proof can be given in terms of
$C$-complexes.) This issue will also be addressed in \cite{DFL}.
\end{rmk}

\subsection{The generalized Hopf link}
\label{hopf}
A \emph{generalized Hopf link} is the link $H_{m,n}\subset S^3$ obtained from
the ordinary positive Hopf link $H_{1,1}=V\cup U$ by replacing
its components~$V$
and~$U$ with, respectively, $m$ and $n$ parallel copies. This link is
naturally $(m+n)$-colored; its signature, which plays a special role in
the paper is given by Theorem~\ref{t:hopf}
below.
Observe the similarity to the correction term in Theorem~\ref{t:main'};
a posteriori, Theorem~\ref{t:hopf} can be interpreted as a special
case of Theorem~\ref{t:main'}, using
 the identity $\sigma_{H_{1,n}}\equiv0$
(which is easily proved independently)
and the fact
that $H_{m,n}$ is the splice of $H_{1,m}$ and $H_{1,n}$.
However, the Hopf links and their signatures are used
essentially
in the proof of Theorem~\ref{t:main'}.

\begin{thm}\label{t:hopf}
For any character
$(\vect v,\vect u)\in\CT^m\times\CT^n$, one has
$\sigma_{H_{m,n}}(\vect v,\vect u )=\defect(\vect v)\defect(\vect u)$.
\end{thm}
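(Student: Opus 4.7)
My plan is to prove Theorem~\ref{t:hopf} by direct computation via a natural C-complex, exploiting the bipartite structure of the generalized Hopf link.

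\textbf{Step 1 (natural C-complex).} I realize $H_{m,n}$ on the Clifford torus $T^2 = \partial(B^2 \times B^2) \cap S^3$, with the $V$-components as $m$ parallel meridians of one solid torus $V^\star$ and the $U$-components as $n$ parallel meridians of the complementary solid torus $U^\star$. Each $V_i$ bounds a meridian disk $D_i \subset V^\star$ and each $U_j$ bounds a meridian disk $E_j \subset U^\star$. After a small isotopy, $\Sigma := \bigcup_i D_i \cup \bigcup_j E_j$ becomes a C-complex with exactly $mn$ clasp arcs (one per pair $(D_i, E_j)$), all of the same sign, and no clasps within the $V$-family or within the $U$-family.

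\textbf{Step 2 (homology, tensor decomposition).} An Euler characteristic count gives $H_1(\Sigma;\Zz) \cong \Zz^{(m-1)(n-1)}$. A geometric basis is indexed by pairs $(i,j) \in \{1,\ldots,m-1\} \times \{1,\ldots,n-1\}$: the loop $\gamma_{ij}$ traverses the boundary of the rectangle whose corners are the clasps $(i,j), (i,n), (m,j), (m,n)$. This induces a canonical Künneth-type decomposition $H_1(\Sigma) \cong \widetilde H_0(\bigcup D_i) \otimes \widetilde H_0(\bigcup E_j) \cong \Zz^{m-1} \otimes \Zz^{n-1}$, reflecting the bipartite clasp pattern.

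\textbf{Step 3 (factorization of the Hermitian form).} Feeding $\Sigma$ into the Cimasoni--Florens formula yields the Hermitian matrix $H(\vect v, \vect u) = \sum_{\epsilon} \prod_i(1-\bar v_i^{\epsilon_i})\prod_j(1-\bar u_j^{\epsilon_j})\,\alpha^\epsilon$, where $\epsilon$ ranges over $\{\pm\}^{m+n}$ and $\alpha^\epsilon$ is the pushoff pairing. The key point is that, because every clasp is bipartite and pushoffs along the $V$-side and $U$-side are independent, each $\alpha^\epsilon$ factors as $\alpha_V^{\epsilon^V} \otimes \alpha_U^{\epsilon^U}$ with respect to the decomposition of Step~2. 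Summing, the Hermitian matrix factors as
$$
H(\vect v,\vect u) = A(\vect v) \otimes B(\vect u),
$$
with $A(\vect v)$ Hermitian of size $m-1$ and $B(\vect u)$ Hermitian of size $n-1$.

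\textbf{Step 4 (signature of the factors, the main obstacle).} I next identify $\sigma(A(\vect v)) = \delta(\vect v)$ and $\sigma(B(\vect u)) = \delta(\vect u)$. The intended route is induction on $m$ (respectively $n$): the base case $m=2$ is a scalar comparison verified by unwinding the definitions of $\ind$ and $\Log$, while the inductive step relates $A$ for $m$ components to $A$ for $m-1$ by a rank-one modification and tracks how the signature changes as $\vect v$ crosses the hyperplanes $\{v_i = 1\}$ and $\{\prod v_i = 1\}$ on $\CT^m$. This is the genuinely delicate part, because the defect $\delta$ is piecewise constant with corrections on codimension-one strata, and the signature of $A(\vect v)$ must be shown to jump exactly on the same loci and by the same integers.

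\textbf{Step 5 (conclusion).} For Hermitian $A,B$ the eigenvalues of $A \otimes B$ are the products of eigenvalues of $A$ and $B$, so $\sigma(A \otimes B) = \sigma(A)\sigma(B)$. Combining this with Step~4 gives $\sigma_{H_{m,n}}(\vect v, \vect u) = \delta(\vect v)\,\delta(\vect u)$, as claimed. As a sanity check, when $m=1$ or $n=1$ the tensor factor has size $0$, consistent with the identity $\sigma_{H_{1,n}} \equiv 0$ mentioned in the text.
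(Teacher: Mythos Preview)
Your strategy—a natural C-complex and a product factorization—matches the paper's in spirit, but two steps are genuine gaps rather than routine details. Step~3 asserts without proof that each multivariate Seifert form $\alpha^\epsilon$ factors as $\alpha_V^{\epsilon^V}\otimes\alpha_U^{\epsilon^U}$ in your chosen basis; your basis (using $D_m,E_n$ as references) breaks the bipartite symmetry, and the claim needs an explicit linking computation to justify. Step~4 is the more serious gap: you flag it as ``the genuinely delicate part'' and then only sketch an inductive plan. The ``rank-one modification'' relating $A$ for $m$ components to $m-1$ is never specified, and matching the wall-crossing of $\sigma(A(\vect v))$ with that of $\delta(\vect v)$ across all the strata of $\CT^m$ is essentially the content of the theorem. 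You also do not address characters with some $v_i=1$ or $u_j=1$, where the C-complex formula does not apply.

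The paper avoids both difficulties by first reducing to the \emph{bivariate} signature on the bi-diagonal: a semicontinuity argument together with the nullity computation (Proposition~\ref{prop.nullity}) shows that the bi-diagonal meets every component of every stratum, so by Corollary~\ref{c:coloring} it suffices to compute the bicolored signature $\tilde\sigma(\eta,\zeta)$. In that setting there are only four Seifert forms $\theta^{\epsilon\delta}$; working with a redundant generating set $\{\alpha_{ij}\}_{(i,j)\in G}$ on which $G=\Zz/m\times\Zz/n$ acts by index shifts, all four are diagonalized simultaneously by the characters of~$G$, yielding explicit eigenvalues $\lambda(\eta,\xi_m^i)\lambda(\zeta,\bar\xi_n^j)$ already in product form. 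The factor signature $\sigma_k(x)=\ind(k\Log x)-k$ is then read off directly from the sign changes of the elementary function $\lambda(x,\,\cdot\,)$, with no induction required. This Fourier diagonalization replaces your Steps~3--4 entirely and is where the actual work in the paper lies.
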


Certainly, Theorem~\ref{t:hopf} computes as well the signature of a
generalized Hopf link equipped with an arbitrary coloring and orientation of
components. First, one can recolor the link by assigning a separate color to
each component (cf.\ Proposition~\ref{prop:coloring} below).
Then,
one can reverse
the orientation of each negative component~$L_i$;
obviously, this operation corresponds to the substitution
$\omega_i\mapsto\bar\omega_i$.
For example, the orientation
of the original link
can be described in terms of a pair of
vectors,
viz. the linking vector $\nu\in\{\pm1\}^m$ of
the $V$-part of $H_{m,n}$ with the $U$-component of the original Hopf
link~$H_{1,1}$ and the linking vector $\lambda\in\{\pm1\}^n$ of the $U$-part with the
$V$-component. Then, {\em assuming that any two linked components of~$H_{m,n}$ are
given distinct colors}, we have
\begin{equation}\label{eq.Hopf.generalized}
  \sigma_{H_{m,n}}(\vect v, \vect u)=\delta_\nu(\vect v)\delta_\lambda(\vect u).
\end{equation}

For future references, we state a few simple properties of the defect
function $\defect$ and, hence, of the signature $\sigma_{H_{m,n}}$.
All proofs are immediate.

\begin{lemma}\label{lem:defect}
The defect function $\defect\colon\CT^\mu\to\Z$ has the following properties:
\begin{enumerate}

\item\label{defect:0}
$\defect(1)=0$;
$\defect\equiv0$ if $\mu=0$ or~$1$;
\item\label{defect:-}
$\defect(\bar\omega)=-\defect(\omega)$ for all $\omega\in\CT^\mu$;
\item\label{defect:sym}
$\defect$ is preserved by the coordinatewise action of the symmetric
group~$S_\mu$;
\item\label{defect:one}
$\defect$ commutes with the coordinate embeddings $\CT^\mu\into\CT^{\mu+1}$,
$\omega\mapsto(\omega,1)$;
\item\label{defect:conj}
$\defect$ commutes with the embeddings $\CT^\mu\into\CT^{\mu+2}$,
$\omega\mapsto(\omega,\eta,\bar\eta)$ for any
$\eta\in\CT^1$.
\end{enumerate}
\end{lemma}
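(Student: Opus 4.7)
The plan is to reduce each of the five claims to a direct calculation after introducing a convenient reformulation. Write $s:=\sum_{i=1}^\mu\Log\omega_i\in[0,\mu)$ and let $N$ denote the number of indices $i$ with $\omega_i\ne1$. Since $\Log\omega_i\in[0,1)$, a trivial check gives $\ind(\Log\omega_i)=0$ when $\omega_i=1$ and $\ind(\Log\omega_i)=1$ otherwise, so that $\defect(\omega)=\ind(s)-N$. With this reformulation each property becomes a one-line verification.

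I would dispatch \iref{defect:0}, \iref{defect:sym}, and \iref{defect:one} first, as they are purely formal: all $\Log\omega_i$ vanish in \iref{defect:0}; both $s$ and $N$ are visibly symmetric functions of the coordinates for \iref{defect:sym}; and appending a coordinate equal to $1$ changes neither $s$ nor $N$ for \iref{defect:one}. For \iref{defect:conj} I would split into the trivial case $\eta=1$ and the case $\eta\ne1$; in the latter the pair $(\eta,\bar\eta)$ contributes $+1$ to $s$ and $+2$ to $N$, and a short floor-function check gives $\ind(s+1)=\ind(s)+2$, so the defect is unchanged.

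The main item, \iref{defect:-}, I would reduce to the elementary identity $\ind(N-s)=2N-\ind(s)$, which holds because $N\in\Z$ and hence $\lfloor N-s\rfloor=N+\lfloor -s\rfloor$ and $\lfloor s-N\rfloor=\lfloor s\rfloor-N$. The other ingredient is how $\Log$ interacts with complex conjugation: $\Log\bar\omega_i=1-\Log\omega_i$ when $\omega_i\ne1$ and $\Log\bar\omega_i=0$ otherwise. Together these give $\sum_i\Log\bar\omega_i=N-s$ with the same count $N$ of nontrivial coordinates, whence $\defect(\bar\omega)=\ind(N-s)-N=-\defect(\omega)$.

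No genuine obstacle is anticipated; the one place where a careless argument could slip is the asymmetric behaviour of $\Log$ at $\omega_i=1$, which forbids writing $\Log\bar\omega=-\Log\omega$ globally and is precisely what produces the $+N$ shift in \iref{defect:-} and the $+1$ shift in \iref{defect:conj}.
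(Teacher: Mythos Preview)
Your proposal is correct and is exactly the kind of direct verification the paper has in mind; the paper itself merely states ``All proofs are immediate'' and gives no further argument. Your reformulation $\defect(\omega)=\ind(s)-N$ is a clean way to package the check, and each of the five computations is accurate, including the careful handling of the asymmetry of $\Log$ at~$1$ in items~\iref{defect:-} and~\iref{defect:conj}.
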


\subsection{Satellite knots}\label{s:satellite}

As was first observed in \cite{EN}, the splice operation generalizes many classical
 link constructions: connected sum, disjoint union and satellites among
 others.

Our first application is
Litherland's formula for the Levine--Tristram signature of a satellite knot, which
 is a particular case of Addendum~\ref{add:L=0}.

Recall that an embedding of a
 solid torus in~$S^3$ into another solid torus in another copy of~$S^3$ is
 called
 \emph{faithful} if the image of a canonical longitude
 of the first solid torus is a canonical longitude of the second one.
 Let $V$ be an unknotted solid torus in $S^3$, and let~$k$ be a knot in the
interior of~$V$, with algebraic winding number $q$, i.e., $[k]$ is $q$
times the class of the core in $H_1(V)$.
Given any knot $K\subset S^{3}$,
the \emph{satellite knot} $K^*$ is defined as the image $f(k)$ under a
faithful embedding $f:V \rightarrow S^3$ sending the core of $V$ to $K$.

The isotopy class $K^*$ depends of course on the embedding $f$ (and even its concordance class, see \cite{Li2}). Nevertheless, its Levine--Tristram signature is determined by the
 signatures of the constituent knots and the winding number:

\begin{thm}[cf.~{\cite[Theorem~2]{Li1}}]\label{cor:satellite}
In the notation above,
the Levine--Tristram signatures of $k$, $K$ and $K^*$ are related via
$$
\sigma_{K^*}(\omega) = \sigma_K(\omega^q) + \sigma_k(\omega),\quad
 \omega\in\CT^1.
$$
\end{thm}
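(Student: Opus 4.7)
The plan is to realize the satellite construction as an instance of splicing, then invoke Addendum~\ref{add:L=0}, which is tailor-made for the case $L'=\varnothing$.

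First I would set up the two summands. Take $\Ss'=S^3$ with the $(1,0)$-colored link $K':=K$ (so $L'=\varnothing$). For the second summand, let $\Ss''=S^3$, let $c\subset S^3\setminus V$ denote the core of the complementary solid torus, and take $K''\cup L'':=c\cup k$ as a $(1,1)$-colored link with distinguished component $K''=c$. The linking vector is then $\lambda''=\lk(c,k)\in\Zz$. Since $[k]=q\,[\text{core of }V]$ in $H_1(V)$, the knot $k$ meets a meridional disk of $V$ algebraically $q$ times, and this intersection number coincides with $\lk(c,k)$; hence $\lambda''=q$.

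Next I would identify the splice with $K^*\subset S^3$. By Definition~\ref{def:splice}, one removes a tubular neighborhood of $K$ from $\Ss'$ and one of $c$ from $\Ss''$, and glues the remaining pieces by a homeomorphism that sends the meridian $m'$ of $K$ to the longitude $\ell''$ of $c$ and $\ell'$ to $m''$. But $S^3\setminus\inte N(c)=V$, and since $V$ and $N(c)$ are complementary unknotted solid tori in $S^3$, on their common boundary the longitude of $c$ is a meridian of $V$ and the meridian of $c$ is a longitude of $V$. Therefore the splice gluing $\partial N(K)\to\partial V$ sends meridian to meridian and longitude to longitude, which is precisely the prescription for a faithful embedding $V\hookrightarrow S^3$ whose core maps to $K$. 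That embedding carries $k$ to $K^*$, so the splice is exactly $K^*\subset S^3$.

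Applying Addendum~\ref{add:L=0} with $\lambda''=q$ now gives
$$
\sigma_{K^*}(\omega)=\sigma_{K'}\bigl(\omega^{q}\bigr)+\sigma_{L''}(\omega)=\sigma_K(\omega^q)+\sigma_k(\omega),
$$
which is the desired identity. The only delicate point is the meridian/longitude bookkeeping used to identify the splice gluing with the defining gluing of the satellite; everything else is a direct substitution into Addendum~\ref{add:L=0}. Crucially, the Addendum imposes no restriction like $(\upsilon',\upsilon'')\ne(1,1)$, so the formula holds for every $\omega\in\CT^1$ with no exceptional values.
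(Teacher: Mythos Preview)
Your proof is correct and follows essentially the same approach as the paper: realize the satellite as the splice of $K\cup\varnothing$ and $C\cup k$ (where $C$ is the core of $S^3\smallsetminus V$), then apply Addendum~\ref{add:L=0} with $\lambda''=\lk(C,k)=q$. You simply supply more detail on the meridian/longitude identification and on why $\lk(C,k)=q$, which the paper states without elaboration.
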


\begin{proof}
Let $C$ be the core of the solid torus $S^3\smallsetminus V$. The satellite
$K^*$ can be written as the splice of $K\cup\varnothing$ and $C\cup k$.
By Addendum~\ref{add:L=0}, we have
$$
\sigma_{K^*}(\omega) = \sigma_K(\omega^\lambda) + \sigma_{k}(\omega).
$$
where $\lambda:=\lk(C,k)$.
By assumption, $\lk(C,k)=q$, and the statement follows.
\end{proof}

\subsection{Iterated torus links}\label{s:torus}

Our next
 application
is another special case of Theorem~\ref{t:main'},
which provides an inductive formula for the signatures of
iterated torus links.
In particular, this class of links contains the algebraic ones,
i.e., the
links of isolated singularities of complex curves in
$\C^{2}$. Note that partial results on the equivariant signatures of the monodromy were obtained by Neumann \cite{N}. 

  Iterated torus links are obtained from an unknot by a sequence of cabling operations (and maybe, reversing
 the orientation
of some of the components).
In order to define the cabling operations (we follow the exposition in  \cite{EN}),
consider two coprime
integers $p$ and $q$ (in particular, if one of them is $0$, the other is
$\pm1$),
a positive integer $d$, a $(1,\mu')$-colored link $K'\cup L'\subset S^{3}$,
 and a small tubular neighbourhood $T'$ of $K'$ disjoint from $L'$.
Let $m,l$ be the meridian and longitude of $K'$, and $K'(p,q)$ be the
 oriented simple closed curve in $\partial T'$ homologous to   $pl+qm$.
More generally, let $d K'(p,q)$ be the disjoint union of  $d$ parallel copies of
 $K'(p,q)$ in $\partial T'$.
We say that the  link
$L= L' \cup dK'(p,q) - K'$ (resp. $L=L' \cup dK'(p,q)$) is obtained from $K'\cup L'$ by a \emph{$(dp,dq)$-cabling with the
core removed} (resp. \emph{retained}).

Let $H_{1,1}=V \cup U$ be the ordinary Hopf link.
The link  $V \cup dU(p,q)$
can be regarded as either $(1,d)$-colored or $(1,1)$-colored.
We denote the corresponding multivariate and bivariate signature functions by
$\storus_{dp,dq}$ and $\btorus_{dp,dq}$, respectively.
Note that, by Proposition~\ref{prop:coloring} below,
$$
\btorus_{dp,dq}(\vect v,\vect u)=\storus_{dp,dq}(\vect v,\vect u,\ldots,\vect u)
 -\tfrac12d(d-1)pq.
$$
In the case of core-removing,
the link~$L$ obtained by the cabling
is nothing
but the splice of $K'\cup L'$ and $V \cup dU(p,q)$.
(Similarly, in the core-retaining case, $L$ is the splice of $K' \cup L'$ and
 $V \cup U \cup dU(p,q)$.)
Hence, the following statement is an immediate consequence of
Theorem~\ref{t:main'}.

\begin{thm}\label{cor:torus}
Let $L$ be obtained from a $(1,\mu')$-colored link $K'\cup L'$ by a
$(dp,dq)$-cabling with the core removed.
For a character $\omega:=(\omega',\omega'')\in\CT^{\mu'}\times\CT^d$, let
$$
\vect\lambda':=\vlk(K',L'),\quad
\vect\lambda'':=(p,\dots,p)\in\Z^{d},\quad\text{and}\quad
\upsilon^*:=(\omega^*)^{\lambda^*},\
 \text{$*=\prime$ or $\prime\prime$}.
$$
Then, assuming that $(\upsilon',\upsilon'')\ne(1,1)$, one has
$$
\sigma_{L}(\vect\omega)=
 \sigma_{K'\cup L'}(\upsilon'',\vect\omega') +
 \storus_{dp,dq}(\upsilon',\vect\omega'')
 +\defect_{\lambda'}(\vect\omega')\defect_{\lambda''}(\vect\omega'').
$$
\end{thm}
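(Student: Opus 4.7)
The plan is to apply Theorem~\ref{t:main'} directly to the splice decomposition of~$L$ given in Section~\ref{s:torus}. The first step is to make the splice presentation explicit: by the construction recalled just before the theorem, the core-removed cabling $L = L' \cup dK'(p,q) - K'$ is obtained by removing a tubular neighborhood~$T'$ of $K'$ in $S^3$ and gluing in the exterior of the distinguished component~$V$ in the link $V \cup dU(p,q) \subset S^3$, where the $d$ parallels of $U(p,q)$ sit in the boundary torus of a tubular neighborhood of $V$. This is precisely the splice of the $(1,\mu')$-colored link $K' \cup L'$ with the $(1,d)$-colored link $V \cup dU(p,q)$ along the distinguished components $K'$ and~$V$.

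The second step is to compute the relevant linking vectors. For the $*=\prime$ side we have $\lambda' = \vlk(K',L')$ by definition. For the $*=\prime\prime$ side we need $\vlk(V,dU(p,q)) \in \Z^d$. Since $V$ is an unknot in~$S^3$ and each component of $dU(p,q)$ is a curve on the boundary of a tubular neighborhood of~$V$ representing the homology class $pl + qm$ (with $l$ the canonical longitude, which bounds in $S^3 \smallsetminus V$, and $m$ a meridian), each such parallel contributes linking number $p$ with~$V$. Hence $\lambda'' = (p,\dots,p) \in \Z^d$, matching the theorem statement.

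The third step is to match the signature notation. The signature of $V \cup dU(p,q)$ viewed as a $(1,d)$-colored link with $V$ carrying the distinguished color is by definition $\storus_{dp,dq}$. Therefore, substituting $\sigma_{K'' \cup L''} = \storus_{dp,dq}$ and $\lambda'' = (p,\dots,p)$ into the formula of Theorem~\ref{t:main'}, and noting that the hypothesis $(\upsilon',\upsilon'') \ne (1,1)$ is assumed, we obtain
\[
\sigma_{L}(\vect\omega) = \sigma_{K'\cup L'}(\upsilon'',\vect\omega') + \storus_{dp,dq}(\upsilon',\vect\omega'') + \defect_{\lambda'}(\vect\omega')\defect_{\lambda''}(\vect\omega''),
\]
which is the desired identity. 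The only potential obstacle is the verification in the second step, but it reduces to the standard calculation of linking numbers on the boundary of a tubular neighborhood of an unknot in~$S^3$, so there is nothing further to prove beyond invoking Theorem~\ref{t:main'}.
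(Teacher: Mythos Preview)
Your proof is correct and follows exactly the paper's approach: the paper states that Theorem~\ref{cor:torus} is an immediate consequence of Theorem~\ref{t:main'} applied to the splice decomposition $L = (K'\cup L') \,\#\, (V\cup dU(p,q))$, and you have simply spelled out the linking-vector verification. One small slip: the curve $U(p,q)$ lies on $\partial T(U)$ (not $\partial T(V)$) and represents $p\,l_U + q\,m_U$ there; under the Hopf identification $l_U = m_V$, $m_U = l_V$, this does give $\lk(V,U(p,q)) = p$ as you claim, but your phrase ``$l$ the canonical longitude, which bounds in $S^3\smallsetminus V$'' describes $l_V$, for which $p\,l_V + q\,m_V$ would link~$V$ with coefficient~$q$, not~$p$.
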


With the evident modifications, this last corollary can be adapted to give a
formula for a $(dp,dq)$-cabling with the core retained.

The
Levine--Tristram signature of the torus link $U(p,q)$
(which coincides with $\btorus_{p,q}(1,\zeta)$ in our notation)
was computed by Hirzebruch.
  For the reader's convenience,
we cite this
result in the next lemma.
Unfortunately, we do not know
any more general statement.
	
	\begin{lemma}[see \cite{B}]
	Let $M=\{1,\ldots,p-1\}\times \{1,\ldots,q-1\} $
and let $0 < \theta \leq \frac{1}{2}$.
Consider
\begin{gather*}
 a = \# \{(i,j)\in M \,|\, \theta <
(i/p)+(j/q)
 < \theta+1 \},\\
 n = \# \{(i,j)\in M\, |\,
(i/p)+(j/q)
 =\theta \text{ or }
(i/p)+(j/q)
 =\theta+1\},\\
 b = |M|-a-n.
\end{gather*}
Then one has
$\btorus_{p,q}(1,\zeta)=b-a$ for
$\zeta= \exp(2i \pi \theta)$.

	\end{lemma}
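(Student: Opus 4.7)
The plan is to compute $\btorus_{p,q}(1,\zeta)=\sigma_{U(p,q)}(\zeta)$ by realising the torus link $U(p,q)$ as the link of the Brieskorn singularity $f(z_{1},z_{2})=z_{1}^{p}+z_{2}^{q}=0$ at the origin and analysing the twisted intersection form on the associated Milnor fibre, following the classical route going back to Brieskorn, Hirzebruch and Pham.

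First, I recall the Pham--Brieskorn description of the Milnor fibre $F_{p,q}=\{f=\epsilon\}\cap B^{4}$, which is a Seifert surface for $U(p,q)$. Its first homology $H_{1}(F_{p,q};\Cc)$ admits a basis $\{e_{i,j}\}_{(i,j)\in M}$ that simultaneously diagonalises the monodromy $h_{*}$ and is well adapted to the Seifert pairing: the monodromy acts on $e_{i,j}$ by the scalar $\zeta_{p}^{i}\zeta_{q}^{j}$, where $\zeta_{p}:=\exp(2\pi i/p)$ and $\zeta_{q}:=\exp(2\pi i/q)$. In particular, $\dim H_{1}(F_{p,q};\Cc)=(p-1)(q-1)=|M|$, and the basis is parametrised by the lattice $M$ appearing in the statement.

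Second, I use the standard expression of the Levine--Tristram signature as the signature of the Hermitian form $(1-\omega)V+(1-\bar\omega)V^{\mathrm{T}}$ associated with a Seifert matrix $V$. Diagonalising in the Pham--Brieskorn basis reduces the computation to summing the signs of $(p-1)(q-1)$ one-dimensional Hermitian blocks. For $\omega=\exp(2\pi i\theta)$ with $0<\theta\le \tfrac{1}{2}$, a short trigonometric manipulation shows that the block attached to $(i,j)\in M$ is definite of sign
$$
-\sign\Bigl(\sin(\pi\theta)\,\sin\bigl(\pi(\theta-i/p-j/q)\bigr)\Bigr)
$$
whenever this quantity is nonzero, and is degenerate otherwise.

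Third, since $\sin(\pi\theta)>0$ in the admissible range for $\theta$, the block indexed by $(i,j)$ contributes $-1$ exactly when $\sin(\pi(\theta-i/p-j/q))>0$, i.e.\ when $i/p+j/q$ lies in the open interval $(\theta,\theta+1)$ modulo $\Zz$; as $0<i/p+j/q<2$, this is precisely the condition defining $a$. It contributes $+1$ on the complementary open set, giving $b$, and $0$ on the measure-zero locus $i/p+j/q\in\{\theta,\theta+1\}$, giving $n$. Summing over $M$ yields $\btorus_{p,q}(1,\zeta)=b-a$. The main obstacle is the explicit identification of the sign of each one-dimensional block with the membership condition defining $a$ and $b$; everything else is standard once the Pham--Brieskorn basis is in place.
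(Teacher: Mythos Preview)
The paper does not prove this lemma; it is quoted from the literature (the citation~[B] is Brieskorn), so there is no ``paper's own proof'' to compare against. Your approach via the Milnor fibre of $z_1^{p}+z_2^{q}$ and the Pham basis is exactly the classical route leading to this formula, so in spirit your proposal matches the intended reference.

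That said, your third paragraph contains two sign slips that happen to cancel. Test your displayed formula on the trefoil ($p=2$, $q=3$, $\theta=\tfrac12$, $(i,j)=(1,1)$, $x:=i/p+j/q=5/6$): it gives $-\sign\bigl(\sin(\pi/2)\sin(-\pi/3)\bigr)=+1$, whereas the block must contribute $-1$ since $\sigma(T_{2,3})=-2$. Likewise, the implication ``$\sin(\pi(\theta-x))>0$ iff $x\in(\theta,\theta+1)$'' is false: for $x\in(\theta,\theta+1)$ one has $\theta-x\in(-1,0)$ and the sine is negative. The correct block sign is $+\sign\bigl(\sin(\pi\theta)\sin(\pi(\theta-x))\bigr)$ (equivalently, keep your minus sign but replace $\theta-x$ by $x-\theta$), after which the interpretation becomes straightforward.

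A second point: the Pham basis diagonalises the monodromy, not the Seifert form. To justify that $(1-\omega)V+(1-\bar\omega)V^{\mathrm T}$ is block-diagonal in this basis you need an extra argument---either the tensor-product structure of the Seifert form for a Brieskorn link, or the observation that distinct monodromy eigenspaces are orthogonal for the Hermitianised form. This is standard but should not be skipped.
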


\subsection{Multivariate vs.\ univariate signature}\label{s.univariate}

 The last application is the computation of
the
multivariate signature of a link in
terms of the Levine--Tristram signature of an auxiliary link.
(One obvious application is the case where the latter auxiliary link is
algebraic, so that its Seifert form can be computed in terms of the
 variation map $H_1(F,\partial F)\to H_1(F)$ in the homology of its Milnor fiber~$F$, see \cite{AGV}.)
 This result is similar to   \cite[Theorem 6.22]{F} by the second author and is related to the computation of signature invariants of $3$-manifolds by Gilmer,
 see  \cite[Theorem 3.6]{G}.

Let $L=L_1\cup\ldots\cup L_\mu$ be a $\mu$-colored link. For simplicity, we
assume that
 the coloring is maximal, i.e., each component of~$L$ is given
a separate color.
Let $[\lambda_{ij}]$ be the linking matrix of~$L$, i.e.,
$\lambda_{ij}=\lk(L_i,L_j)$ for $i\ne j$ and $\lambda_{ii}=0$.

Consider a character $\omega\in\CT^\mu$ and assume that
$\omega_i=\xi^{n_i}$, where $\xi:=\exp(2\pi i/n)$, for some integers $n>0$
and $0<n_i<n$.
(In particular, all $\omega_i\ne1$.)
For $i=1,\ldots,\mu$, denote
\begin{itemize}
\item
$\tlambda_i:=\sum_{j=1}^\mu n_j\lambda_{ij}$,
the weighted linking number of~$L_i$ and $L\smallsetminus L_i$;
\item
$\upsilon_i:=  \prod_{j=1}^\mu\omega_j^{\smash{\lambda_{ij}}}
 =\xi^{\smash{\tlambda_{i}}}$, where $\lambda_i$ is the
 $i$-th row of $[\lambda_{ij}]$.

\end{itemize}

Fix an integral vector
$p:=(p_1,\ldots,p_\mu)\in\Z^\mu$ and consider the \emph{monochrome} link
$\bar L:=\bar L_p(\omega)$ obtained from~$L$ by the $(n_i,n_ip_i)$-cabling
along the component~$L_i$ for each $i=1,\ldots,\mu$.
In other words, each component $L_i$ of~$L$ is regarded
$n_i$-fold, and it is replaced with $n_i$ ``simple'' components,
possibly linked (if $p_i\ne0$).

\begin{thm}\label{th.many-one}
In the notation above, one has the identity
$$
\sigma_L(\omega)=
 \sigma_{\bar L}(\xi)
 -\sum_{i=1}^\mu\btorus_{n_i,n_ip_i}(\upsilon_i,\xi)
 +\sum_{i=1}^\mu(n_i-1)\ind(\tlambda_i/n)
 +\sum_{1\le i<j\le\mu}\lambda_{ij}.
$$
\end{thm}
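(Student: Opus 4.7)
I would prove Theorem~\ref{th.many-one} by iterated application of Theorem~\ref{cor:torus}. Put $L^{(0)} := L$ and, for $k = 1, \ldots, \mu$, let $L^{(k)}$ be obtained from $L^{(k-1)}$ by the $(n_k, n_k p_k)$-cabling with the core removed along the component corresponding to $L_k$; then $L^{(\mu)} = \bar L$, equipped with the maximal coloring. Throughout I work with the character on $L^{(k)}$ that assigns $\xi$ to each parallel copy coming from a previously cabled $L_j$ ($j \le k$) and $\xi^{n_j}$ to every yet-uncabled $L_j$ ($j > k$); the evaluation on $L^{(\mu)}$ is thus $(\xi,\ldots,\xi)$, while on $L^{(0)} = L$ it is the original $\omega$.

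At step $k$, Theorem~\ref{cor:torus} applies with cable parameters $(d, p, q) = (n_k, 1, p_k)$ and $\lambda'' = (1, \ldots, 1) \in \Z^{n_k}$, yielding
\[
\sigma_{L^{(k)}} - \sigma_{L^{(k-1)}} = \storus_{n_k, n_k p_k}(\xi^{\tlambda_k}, \xi, \ldots, \xi) + \defect_{\lambda'_k}\!(\,\cdot\,)\,\defect(\xi, \ldots, \xi)
\]
at the inductive characters, because the distinguished product $\upsilon'' = \xi^{n_k} = \omega_k$ exactly restores the character on $L_k$ and the secondary product equals $\upsilon' = \xi^{\tlambda_k}$. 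The hypothesis $\upsilon''\ne 1$ is automatic since $0 < n_k < n$. A direct computation from the definition, using $\ind(m/n) = 1$ for $0 < m < n$, yields $\defect(\xi, \ldots, \xi) = 1 - n_k$ on $\CT^{n_k}$ and $\defect_{\lambda'_k}\!(\,\cdot\,) = \ind(\tlambda_k/n) - \sum_{j < k} n_j \lambda_{kj} - \sum_{j > k} \lambda_{kj}$.

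Telescoping the $\mu$ identities expresses $\sigma_{L^{(\mu)}}(\xi, \ldots, \xi)$ as $\sigma_L(\omega)$ plus the accumulated $\storus$ and defect cross-terms. I then trade $\storus$ for $\btorus$ via the identity $\storus_{n, np}(v, u, \ldots, u) = \btorus_{n, np}(v, u) + \tfrac{1}{2} n(n-1) p$ stated after Theorem~\ref{cor:torus}, and apply Proposition~\ref{prop:coloring} to convert the multivariate $\sigma_{L^{(\mu)}}(\xi, \ldots, \xi)$ into $\sigma_{\bar L}(\xi)$, subtracting the sum of pairwise linking numbers in $\bar L$. These split as $\binom{n_k}{2} p_k$ within each block of $n_k$ parallel copies of $L_k$ (which cancels exactly the $\tfrac{1}{2} n_k(n_k - 1) p_k$ offset from the $\storus \to \btorus$ conversion) and $n_i n_j \lambda_{ij}$ across distinct blocks. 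After combining the off-diagonal linking corrections with the cross-defect contributions $-(n_k - 1) \sum_{j \ne k}$ via the identity $n_i n_j - (n_i n_j - 1) = 1$, one is left with the residual $\sum_{i < j} \lambda_{ij}$, while the remaining defect contributions $-(n_k - 1) \ind(\tlambda_k/n)$ assemble into $\sum_i (n_i - 1) \ind(\tlambda_i/n)$.

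The main obstacle is the final arithmetic step: three sources of corrections—the $\btorus$-versus-$\storus$ offsets, the pairwise-linking corrections from Proposition~\ref{prop:coloring}, and the cross-block defect contributions—must cancel exactly to yield the compact sum $\sum_{i<j} \lambda_{ij}$. Each ingredient is elementary, but tracking signs, summation ranges, and the block structure of the linking numbers in $\bar L$ requires careful bookkeeping.
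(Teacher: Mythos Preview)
Your proposal is correct and follows essentially the same route as the paper: iterated application of the splice formula along each component, computation of the two defect factors as $1-n_k$ and $\ind(\tlambda_k/n)-\sum_{j<k}n_j\lambda_{kj}-\sum_{j>k}\lambda_{kj}$, and a final appeal to Proposition~\ref{prop:coloring} to pass to the monochrome link. The only cosmetic difference is that the paper keeps the intermediate links $\mu$-colored (all $n_j$ copies of $L_j$ sharing color~$j$), so that Theorem~\ref{t:main'} yields $\btorus$ directly and the within-block linking correction never appears, whereas you carry the maximal coloring, obtain $\storus$ via Theorem~\ref{cor:torus}, and then cancel the $\tfrac12 n_k(n_k-1)p_k$ offset against the within-block part of the Proposition~\ref{prop:coloring} correction; the cross-block arithmetic is identical in both versions.
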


\begin{cor}\label{cor:p=0}
If $p=0$, the second term in Theorem~\ref{th.many-one} vanishes and one has
$$
\sigma_L(\omega)=
 \sigma_{\bar L}(\xi)
 +\sum_{i=1}^\mu(n_i-1)\ind(\tlambda_i/n)
 +\sum_{1\le i<j\le\mu}\lambda_{ij}.
$$
For small values of~$\mu$, this identity simplifies even further:
\begin{enumerate}
\item\label{mu=1}
if $\mu=1$, then
$\sigma_L(\omega)=\sigma_{\bar L}(\xi)$;
\item\label{mu=2}
if $\mu=2$ and $|\lambda_{12}|\le1$, then
$\sigma_L(\omega)=\sigma_{\bar L}(\xi)+(n_1+n_2-1)\lambda_{12}$.
\end{enumerate}
\end{cor}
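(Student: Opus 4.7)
The plan is to derive Corollary~\ref{cor:p=0} as a direct specialization of Theorem~\ref{th.many-one} to $p=0$, followed by a short bookkeeping argument in the two small cases. The first task is to show that, when every $p_i=0$, each summand $\btorus_{n_i,n_ip_i}(\upsilon_i,\xi)=\btorus_{n_i,0}(\upsilon_i,\xi)$ vanishes. For $(p,q)=(1,0)$, which is coprime in the convention of Section~\ref{s:torus}, the $(n_i,0)$-cabling replaces $L_i$ by $n_i$ parallel longitudinal copies, and the auxiliary torus link $V\cup n_iU(1,0)$ entering the definition of $\storus_{n_i,0}$ is precisely the generalized Hopf link $H_{1,n_i}$. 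Invoking the identity $\sigma_{H_{1,n}}\equiv 0$ recorded in Section~\ref{hopf}, together with the defining relation $\btorus_{dp,dq}(\vect v,\vect u)=\storus_{dp,dq}(\vect v,\vect u,\ldots,\vect u)-\tfrac12 d(d-1)pq$ (whose correction term also vanishes because $pq=0$), one obtains $\btorus_{n_i,0}\equiv 0$. The second sum in Theorem~\ref{th.many-one} therefore drops out, which establishes the displayed identity for general $\mu$.

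For $\mu=1$, the linking matrix is the $1\times 1$ zero matrix, so $\tlambda_1=n_1\lambda_{11}=0$ and $\ind(0)=0$; the sum indexed by $1\le i<j\le\mu$ is empty. Both correction terms vanish, giving $\sigma_L(\omega)=\sigma_{\bar L}(\xi)$. For $\mu=2$ with $|\lambda_{12}|\le 1$, one computes $\tlambda_1=n_2\lambda_{12}$ and $\tlambda_2=n_1\lambda_{12}$; since $0<n_1,n_2<n$ and $\lambda_{12}\in\{-1,0,1\}$, each ratio $\tlambda_i/n$ lies in the open interval $(-1,1)$. From the definition $\ind(x)=\lfloor x\rfloor-\lfloor -x\rfloor$, a direct case check yields $\ind(\tlambda_i/n)=\lambda_{12}$ in all three cases. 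The two remaining correction sums then combine as
$$
(n_1-1)\lambda_{12}+(n_2-1)\lambda_{12}+\lambda_{12}=(n_1+n_2-1)\lambda_{12},
$$
which matches the claim.

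The only substantive step in this argument is the vanishing $\btorus_{n_i,0}\equiv 0$, which rests on the already-recorded identity $\sigma_{H_{1,n}}\equiv 0$ and the absence of the recoloring correction in the degenerate case $pq=0$; once this is in place, the rest is a routine evaluation of $\ind$ on rationals of small numerator. The only pitfall to watch for is to confirm that the coprimality hypothesis built into the definition of $(dp,dq)$-cabling is satisfied for $(p,q)=(1,0)$, which the paper explicitly allows.
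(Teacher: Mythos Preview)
Your argument is correct and follows essentially the same route as the paper's proof: identify $V\cup n_iU(1,0)$ with $H_{1,n_i}$ and use the vanishing of its signature (Theorem~\ref{t:hopf} together with Lemma~\ref{lem:defect}\iref{defect:0}) to kill the second sum, then observe that $\ind(\lambda_{12}n_j/n)=\lambda_{12}$ when $|\lambda_{12}|\le1$ and $0<n_j<n$. Your treatment is a bit more explicit about the recoloring correction $\tfrac12 d(d-1)pq$ and the coprimality of $(1,0)$, but these are exactly the points the paper leaves implicit.
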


\begin{proof}[Proof of Corollary~\ref{cor:p=0}]
If $p_i=0$, then  $V \cup U(n_i,0)=H_{1,n_i}$ is a generalized Hopf link;
its signature vanishes due to Theorem~\ref{t:hopf}
and Lemma~\ref{lem:defect}\iref{defect:0}.
The only other statement that needs proof is item~\ref{mu=2}, where
we have $\ind(\lambda_{12}n_i/n)=\lambda_{12}$
whenever $|\lambda_{12}|\le1$ and $0<n_i<n$, $i=1,2$.
\end{proof}

\begin{ex}
Let $L=H_{1,1}$ be the ordinary Hopf link, so that $\sigma_L\equiv0$ by
Theorem~\ref{t:hopf}
and Lemma~\ref{lem:defect}\iref{defect:0}.
On the other hand, taking $p=0$, we obtain $\bar L=H_{n_1,n_2}$;
by Theorem~\ref{t:hopf} and Proposition \ref{prop:coloring},
we get $\sigma_{\bar L}(\xi)=(1-n_1)(1-n_2)-n_1n_2$,
which agrees with Corollary~\ref{cor:p=0}\iref{mu=2}.
\end{ex}

\begin{proof}[Proof of Theorem~\ref{th.many-one}]
Denote $L[0]:=L$ and, for $i=1,\ldots,\mu$, let~$L[i]$ be the link
obtained from $L[i-1]$ by the $(n_i,n_ip_i)$-cabling along
the component~$L_i$. Each link $L[i]$ is naturally $\mu$-colored; we assign
to this link the character
$\omega[i]:=(\xi,\ldots,\xi,\omega_{i+1},\ldots,\omega_\mu)$.
In this notation, $\bar L$ is the monochrome version of $L[\mu]$ and,
by Proposition~\ref{prop:coloring},
\begin{equation}
\sigma_{\bar L}(\xi)=\sigma_{L[\mu]}(\omega[\mu])
 -\sum_{1\le i<j\le\mu}n_in_j\lambda_{ij}.
\label{eq.uni}
\end{equation}
Introduce the following characters:
\begin{itemize}
\item
$\tomega'[i]:=(\xi,\ldots,\xi)\in\CT^{n_i}$;
\item
$\tomega''[i]$, obtained from $\omega$ by
replacing each
$\omega_j$ with $n_j|\lambda_{ij}|$
copies of $\xi^{\smash{\sg\lambda_{ij}}}$, if $j\le i$, or
$|\lambda_{ij}|$ copies of $\omega_j^{\smash{\sg\lambda_{ij}}}$, if $j>i$;
\item
$\tomega[i]$, obtained from $\omega$ by
replacing each $\omega_j$ with
$n_j|\lambda_{ij}|$ copies of $\xi^{\smash{\sg\lambda_{ij}}}$.
\end{itemize}
 By definition, $L[i]$ is the splice of $L[i-1]$ and {$V \cup {n_i}U(1,p_i)$.
Then Theorem~\ref{t:main'}  applies and, for each $i=1,\ldots,\mu$,
\begin{equation}
\sigma_{L[i]}(\omega[i])=
 \sigma_{L[i-1]}(\omega[i-1])
 +\btorus_{n_i,n_ip_i}(\upsilon_i,\xi)
 +\defect(\tomega'[i])\defect(\tomega''[i]).
\label{eq.step}
\end{equation}
We have $\Log\tomega'[i]=n_i/n$; since $0<n_i<n$, this implies
\begin{equation}
\defect(\tomega'[i])=1-n_i.
\label{eq.prime}
\end{equation}
  One can show that
$\defect(\tomega''[i])=\defect(\tomega[i])-\sum_{j=i+1}^{\mu}(1-n_j)\lambda_{ij}$.
Indeed, $\tomega[i]$ is obtained from $\tomega''[i]$ by
$|\lambda_{ij}|$ operations of
replacement of
a single copy of $\omega_j^{\smash{\sg\lambda_{ij}}}$
with
copies of $\xi^{\smash{\sg\lambda_{ij}}}$
for all $j>i$; as in~\eqref{eq.prime},
one such operation increases the value of~$\defect$ by $(1-n_j)\sg \lambda_{ij}$.
The character $\tomega[i]$ has all entries equal to~$\xi$ or $\bar\xi$,
with the exponent sum equal to $\tlambda_i$.
Using Lemma~\ref{lem:defect}\iref{defect:conj} and~\iref{defect:sym} to
cancel the pairs $\xi$, $\bar\xi$, we get
$\defect(\tomega[i])=\ind(\tlambda_i/n)-\tlambda_i$;
hence,
\begin{equation}
\defect(\tomega''[i])=\ind(\tlambda_i/n)
 -\sum_{j=1}^{i-1} n_j\lambda_{ij}
 -\sum_{j=i+1}^{\mu}\lambda_{ij}.
\label{eq.2prime}
\end{equation}
Applying~\eqref{eq.step} inductively and taking into account~\eqref{eq.prime}
and~\eqref{eq.2prime}, we arrive at
$$
\sigma_{L[\mu]}(\omega[\mu])=
 \sigma_L(\omega)
 +\sum_{i=1}^\mu\btorus_{n_i,n_ip_i}(\upsilon_i,\xi)
 -\sum_{i=1}^\mu(n_i-1)\ind(\tlambda_i/n)
 +\sum_{1\le i<j\le\mu}(n_in_j-1)\lambda_{ij},
$$
and the statement of the theorem follows from~\eqref{eq.uni}.}
\end{proof}

\section{Signature of a link in a homology sphere}\label{s:prel}

In the early sixties Trotter introduced a numerical knot invariant called the
signature \cite{b:Tr}, which was subsequently extended to links by Murasugi
\cite{b:Mu}. This invariant was generalized to a function
 (defined via
Seifert forms) on
$S^1\subset\Cc$ by Levine and Tristram \cite{b:Tri,b:Le}.
 It was then
reinterpreted in terms of  coverings and intersection forms of $4$-manifolds
by Viro \cite{Viro,V2}. Our definition of the signature of a colored link follows
Viro's approach and the $G$-signature theorem, see also \cite{GLM, F}.

\subsection{Twisted signature and additivity}\label{s:twisted}

We start with recalling the definition and some properties of the twisted
signature of a $4$-manifold.

Let $N$ be a compact smooth oriented $4$-manifold   with boundary and $G$ a finite abelian
group.
Fix a covering $N^G\to N$, possibly
ramified, with
$G$ the group of deck transformations.
If the covering is ramified, we assume that the ramification locus~$F$ is a
union of smooth compact surfaces $F_i\subset N$ such that
\begin{enumerate}
\item\label{F.1}
$\partial F_i=F_i\cap\partial N$;
\item\label{F.2}
each surface~$F_i$ is transversal to~$\partial N$, and
\item\label{F.3}
distinct surfaces intersect transversally, at double points,
and away from~$\partial N$.
\end{enumerate}
Items~\iref{F.1} and~\iref{F.2} above mean that
each component $F_i$ of $F$ is a properly embedded surface. For short,
a compact surface $F\subset N$ satisfying all conditions
\iref{F.1}--\iref{F.3} will be called \emph{properly immersed}.
Under these assumptions, $N^G$ is an oriented rational homology manifold and
we have a well-defined Hermitian intersection form
$$
\iif\colon H_2(N^G;\Cc)\otimes H_2(N^G;\Cc)\to\Cc.
$$
Regard the homology groups $H_*(N^G;\Cc)$ as $\Cc[G]$-modules
and consider
the form
$$
\varphi\colon H_2(N^G;\Cc)\otimes H_2(N^G;\Cc)\to\Cc[G],\quad
 \varphi(x,y):=\sum_{g \in G}\<x,gy\>g.
$$
Since $G$ is
abelian,
this form is sesquilinear, i.e.,
$\varphi(g_{1}x,g_{2}y)=g_{1}g_{2}^{-1}\varphi(x,y)$ for all
$g_{1},g_{2}\in G$.

Any multiplicative character $\chi\colon G\to\Cc^*$ induces a homomorphism
$\Cc[G]\to\Cc$ of algebras with involution ($zg\mapsto\bar zg\1$ in $\C[G]$ is
mapped to $\eta\mapsto\bar\eta$ in~$\C$).
This makes~$\C$ a
$\C[G]$-module,
and we can consider the \emph{twisted homology}
$$
H^\chi_*(N,F):=H_*(N^G;\C)\otimes_{\C[G]}\C.
$$
In this notation, the ramification locus~$F$ is omitted whenever it is empty
or understood.
The form~$\varphi$ above induces a $\C$-valued Hermitian form~$\varphi^\chi$
on $H^\chi_2(N,F)$; explicitly, the latter is given by
$$
\varphi^\chi(x\otimes z_{1},y\otimes z_{2})
 =z_{1}\bar z_{2}\sum_{g\in G}\<x,gy\>\chi(g).
$$

We will denote by $\sign(N)$ the ordinary signature of the
$4$-manifold $N$,
i.e., that of the
form $\iif$ on $H_{2}(N)$.
The \emph{twisted signature}, denoted by $\sign^\chi(N,F)$, is the signature
of the above Hermitian form~$\varphi^\chi$.

\begin{rmk}\label{rem:G}
One can easily see that the twisted homology $H_*^\chi(N,F)$ and twisted
signature $\sign^\chi(N,F)$ are independent of the group~$G$ used in the
construction: they only depend on the pair $(N,F)$ and the multiplicative
character $\chi\colon H_1(N\smallsetminus F)\to\C^*$, which must be assumed
of finite order. In particular, we can always take for~$G$ the ``smallest'' cyclic
group, viz.\ the image of~$\chi$.
Indeed, there is an obvious canonical isomorphism between $H_*^\chi(N,F)$ and
the $\chi$-equitypical summand
$$
V_*^\chi(G):=
 \bigl\{x\in H_*(N^G;\Cc)\bigm|gx=\chi(g)x\text{ for all $g\in G$}\bigr\},
$$
and the form $\varphi^\chi$ is
$|G|$-times the restriction to
$V^\chi(G)$ of the ordinary intersection index form $\iif$.
Now, if $G$ is replaced with a larger group $G'\onto G$, the transfer
map induces an
isomorphism $V_*^\chi(G)\to V_*^\chi(G')$, multiplying the intersection index form
by another positive factor $[G':G]$; hence, the signature is preserved.
\end{rmk}

Of particular
interest
is
the behavior of the signature under the gluing of
manifolds. Recall that, by Novikov's additivity, if $N_{1}$ and $N_{2}$ are two
$4$-manifolds such that $\partial N_{1}=-\partial N_{2}$ and
$N=N_{1}\cup_{\partial}N_{2}$, then the ordinary and the twisted signatures
of $N$ satisfy
$$
\sign(N)=\sign(N_{1})+\sign(N_{2})\quad\text{and}\quad
 \sign^\chi(N,F)=\sign^\chi(N_1,F_1)+\sign^\chi(N_2,F_2).
$$
Of course, in
the twisted version we assume that the ramification loci $F_1$ and $F_2$
match along the boundary, $F=F_1\cup_\partial F_2$, and the characters
on~$N_1$, $N_2$
are the restrictions of
a character on~$N$.
If $N_{1}$ and $N_{2}$ are glued along a part of their boundaries only, the
above equalities may fail. This situation was completely studied by Wall in
\cite{b:Wa}. For our purposes we only need a particular case of Wall's
theorem, which we state below. The result is given in terms of ordinary
signatures, but, as mentioned by Wall at the end of his paper, the same
conclusion holds if we consider twisted signatures.
\begin{thm}[see~\cite{b:Wa}] \label{additivity}
Suppose that $\partial N_1 \simeq M_1 \cup M_0$ and
$\partial N_2\simeq M_2\cup -M_0$, where $M_0,M_1$ and $M_2$ are
$3$-manifolds glued along their
common boundary. Let $N:=N_1\cup_{M_0}N_2$ and $X:=\partial M_0= \partial
M_1= \partial M_2$. Consider the $\Cc$-vector spaces
$A_i:=\Ker[H_1(X;\C)\rightarrow H_1(M_i;\C)]$,
$i=0,1,2$, and let
$$
K(A_0,A_1,A_2):=\frac{A_0\cap(A_1 + A_2)}{(A_0\cap A_1)+(A_0\cap A_2)}.
$$
If $K(A_0,A_1,A_2)$ is trivial, then we have $\sign(N)=\sign(N_{1})+\sign(N_{2}).$
\end{thm}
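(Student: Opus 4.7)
The plan is to deduce this statement from Wall's general non-additivity formula for the signature under gluing along a piece of the boundary, and to observe that under our hypothesis the correction term automatically vanishes. Wall's theorem asserts that
$$
\sign(N) = \sign(N_1) + \sign(N_2) + \sigma(\Psi),
$$
where $\Psi$ is a Hermitian form canonically attached to the triple $(A_0, A_1, A_2)$ and, crucially, defined precisely on the quotient vector space $K := K(A_0, A_1, A_2)$. Once this identity is available, the hypothesis $K = 0$ makes $\Psi$ a form on the zero vector space, so $\sigma(\Psi) = 0$ and additivity follows.

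To set up Wall's form $\Psi$, I would first recall that the intersection pairing makes $H_1(X;\C)$ a symplectic vector space (since $X$ is a closed oriented surface) and that each $A_i$ is a Lagrangian subspace of this symplectic space, by the half-lives-half-dies principle applied to the compact 3-manifold $M_i$ with boundary $X$. For classes $[x], [y] \in K$ represented by $x, y \in A_0 \cap (A_1 + A_2)$, one decomposes $y = y_1 + y_2$ with $y_i \in A_i$ and sets
$$
\Psi([x], [y]) := \<x, y_2\>_X,
$$
where $\<\,\cdot\,,\,\cdot\,\>_X$ is the intersection pairing on $H_1(X;\C)$. A short check shows that alternative decompositions change $\Psi$ only by contributions in $(A_0 \cap A_1) + (A_0 \cap A_2)$, hence $\Psi$ descends to a well-defined form on $K$, and the Lagrangian property of the $A_i$ makes it Hermitian.

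The heart of the matter is Wall's signature identity, which I would establish via a Mayer--Vietoris analysis of the gluing $N = N_1 \cup_{M_0} N_2$: one expresses $H_2(N;\C)$ in terms of $H_2(N_i;\C)$ together with boundary cycles supported along the intermediate $3$-manifolds $M_0, M_1, M_2$ meeting at $X$, and then identifies the intersection form on $N$ as the orthogonal sum of the two intersection forms on the pieces plus an explicit extra block coming from cycles that cross $M_0$ non-trivially. A careful computation shows that this extra block is the form $\Psi$, so its signature is exactly the signature defect. For the twisted version asserted at the end of the theorem, the same Mayer--Vietoris argument applies verbatim to the $\chi$-equitypical summand of the homology of the associated cyclic ramified cover.

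The main obstacle is, unsurprisingly, Wall's identity itself: establishing well-definedness and Hermiticity of $\Psi$ and matching the extra block in the intersection form with $\Psi$ require delicate bookkeeping in the Mayer--Vietoris sequence. Since this is classical and carried out in full generality in \cite{b:Wa}, I would content myself with citing that result; the particular case we need then reduces to the trivial observation that the signature of the zero form on the zero space is zero, immediately yielding $\sign(N) = \sign(N_1) + \sign(N_2)$ and its twisted analogue.
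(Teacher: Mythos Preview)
Your proposal is correct and matches the paper's treatment exactly: the paper does not give an independent proof of this statement but simply cites Wall~\cite{b:Wa}, whose non-additivity theorem provides the correction term $\sigma(\Psi)$ on $K(A_0,A_1,A_2)$, which vanishes trivially when $K=0$. Your exposition is in fact more detailed than the paper's, which merely states the result and refers the reader to Wall.
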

\begin{rmk}\label{r:wall}
Note that the additivity in Theorem~\ref{additivity} holds if at least two of
$A_0,A_1,A_2$ are equal. Moreover, Wall shows in his article that the vector
space $K(A_0,A_1,A_2)$ is independent of the order of the $A_{i}$'s. When
working with twisted signatures, we shall use the notation
$A_i^\chi:=\Ker[H_1^\chi(X)\to H_1^\chi(M_i)]$,
$i=0,1,2$.
\end{rmk}

\subsection{The signature of a link}\label{s:defs}

Let $L$ be a $\mu$-colored link in an integral homology sphere $\Ss$.
By Alexander duality,
the group
$H_1(\Ss\smallsetminus L)$ is generated by the meridians of the components of
$L$. We shall denote by  $m_{i}^k$ the meridians of the components of
the sublink $L_i$ of $L$ of color $i=1,\ldots,\mu$.

Let $\Zz^\mu$ be the free multiplicative group generated by $t_1,\dots,t_\mu$.
The coloring on $L$
gives rise to a homomorphism
$c:H_1(\Ss\smallsetminus L) \to \Zz^\mu$,
$m_i^k\mapsto t_i$, $i=1,\ldots,\mu$.
We consider multiplicative characters
$H_1(\Ss\smallsetminus L)\to\C^*$
 that \emph{respect the coloring}, i.e.,
 factor
 through~$c$.
They are determined by their values on the
 generators~$t_i$,
and the group of
such characters
can be identified with $\CT^{\mu}$. Through this identification,  the character $\omega \in \CT^\mu$ assigns the meridians of the components of the sublink $L_i$ to $\omega_i$.
With a certain abuse of the language, we will shortly speak about the character $\omega$ on $L$ and say that $\omega$ assigns $\omega_i$
 to (each component of) $L_i$.

The next proposition asserts that
$\omega\colon H_1(\Ss\smallsetminus L)\to\C^*$ extends to a finite order character
$\omega\colon H_1(N\smallsetminus F)\to\C^*$ (also denoted by the same
letter~$\omega$),
where $N$ is a $4$-manifold
bounded by~$\Ss$ and $F\subset N$ is a certain
properly
immersed
surface.

\begin{propo} \label{p:bounding}
Let $L$ be a $\mu$-colored link in an integral homology sphere~$\Ss$.
Then, there exists a compact smooth oriented $4$-manifold $N$ and
 an oriented properly immersed
surface $F=F_1\cup\ldots\cup F_\mu$ in $N$
such that
\begin{itemize}
\item
$\partial N=\Ss$ and $\partial F_i=L_i$ for $i=1,\ldots,\mu$,
\item
the group
$H_1(N\smallsetminus F) \simeq \ZZ^\mu$ is
freely
generated
by the meridians $\bar{m}_i$ of~$F_i$, and
\item
one has $[F_i,\partial F_i]=0$ in $H_2(N,\partial N)$.
\end{itemize}
As a consequence, any character $\omega\in\CT^\mu$
extends to a unique character
$$
\omega\colon H_1(N \smallsetminus F)\to\C^*,\quad
 \bar m_{i}\mapsto\omega_i.
$$
\end{propo}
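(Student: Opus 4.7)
The plan is to build $N$ and $F$ in three stages, arranging both the topology of $N$ and the homology classes of the $F_i$ so that a Thom/Lefschetz duality calculation yields $H_1(N\smallsetminus F)\cong\Z^\mu$.

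First, I would choose a compact oriented smooth $4$-manifold $N$ with $\partial N=\Ss$; such an $N$ exists because the oriented cobordism group $\Omega_3^{SO}$ vanishes. A sequence of interior $1$-surgeries (on embedded circles in $\inte N$) kills $\pi_1(N)$ without touching the boundary, so I may assume $N$ is simply connected. In particular $H_1(N)=0$, and since $H_1(\Ss)=0$ as well, the long exact sequence of the pair $(N,\partial N)$ gives the surjection $H_2(N)\onto H_2(N,\partial N)$ that will be used later.

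Next, for each color $i=1,\ldots,\mu$, the sublink $L_i$ is nullhomologous in the homology sphere $\Ss$ and hence bounds a connected oriented Seifert surface $\Sigma_i\subset\Ss$ (tubing components together if necessary). Pushing $\Sigma_i$ slightly into $N$ along a collar of $\partial N$ gives a properly embedded oriented surface $F_i^{0}\subset N$ with $\partial F_i^{0}=L_i$; a generic perturbation makes distinct $F_i^{0}$ and $F_j^{0}$ transverse, so conditions \iref{F.1}--\iref{F.3} are met. By the surjectivity in the previous step, each relative class $[F_i^{0},\partial F_i^{0}]$ is the image of $[\Sigma_i']$ for some closed oriented surface $\Sigma_i'\subset\inte N$. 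An oriented connect sum of $F_i^{0}$ with $-\Sigma_i'$ along a thin tube, chosen in the complement of the other $F_j^{0}$ and away from double points, yields the final $F_i$, still properly immersed, with $\partial F_i=L_i$ and now $[F_i,\partial F_i]=0$ in $H_2(N,\partial N)$.

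The last step is the computation of $H_1(N\smallsetminus F)$ from the long exact sequence of the pair $(N,N\smallsetminus F)$. Excision to a regular neighborhood $T=\bigcup T_i$ of $F$ and the Thom isomorphism on each disk-bundle piece identify $H_2(N,N\smallsetminus F)\cong\Z^\mu$, with one generator per connected $F_i$; the connecting operator sends this generator to the corresponding meridian $\bar m_i$. Under the same identification the map $H_2(N)\to H_2(N,N\smallsetminus F)$ sends $[\Sigma]$ to $([\Sigma]\cdot F_i)_{i=1}^{\mu}$, which by Lefschetz duality is the pairing with $[F_i,\partial F_i]\in H_2(N,\partial N)$; the latter classes vanish by construction, so this map is zero. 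Combined with $H_1(N)=0$, this yields $H_1(N\smallsetminus F)\cong\Z^\mu$, freely generated by $\bar m_1,\ldots,\bar m_\mu$, and any character $\omega\in\CT^\mu$ extends uniquely by $\bar m_i\mapsto\omega_i$, automatically of finite order since each $\omega_i$ is a root of unity.

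The hard part is the duality identification in this last step: the simple-connectivity of $N$ (to kill the image in $H_1(N)$) and the vanishing of $[F_i,\partial F_i]$ in $H_2(N,\partial N)$ (to kill the connecting map) are both essential, which is the reason for the preparations in the earlier stages. The isolated double points of $F$ contribute transverse pairs of meridians in a local chart but do not increase the rank of $H_2(N,N\smallsetminus F)$ above $\mu$, so they do not disturb the global identification.
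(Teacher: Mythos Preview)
Your argument is correct and follows a genuinely different route from the paper's. The paper constructs~$N$ explicitly: it takes a $\pm1$-framed, algebraically split surgery presentation of~$\Ss$ (invoking a result of Matveev), slides the link so that each $L_i$ has linking number~$0$ with every surgery curve, and lets the $F_i$ be Seifert surfaces inside the $0$-handle~$B^4$. The vanishing of $[F_i,\partial F_i]$ then comes for free from the linking condition, and the computation of $H_1(N\smallsetminus F)$ is done via the cohomology sequence of $(N,N\smallsetminus F)$ together with Alexander and Lefschetz duality (with an extra pass over~$\mathbb F_p$ to rule out torsion). Your approach is more conceptual: any simply connected filling works, and the homology exact sequence of $(N,N\smallsetminus F)$ gives $H_1(N\smallsetminus F)\cong\Z^\mu$ directly, with no separate torsion check needed. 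The paper's explicit handlebody construction, on the other hand, is reused later (Lemma~\ref{l:disc}) to arrange that the distinguished component~$K$ bounds an actual disk in~$N$, which your abstract filling does not immediately provide.

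One small remark: your tubing step is in fact unnecessary. Since each $F_i^{0}$ is obtained by pushing a Seifert surface $\Sigma_i\subset\partial N$ into a collar, the relative cycle $F_i^{0}$ is homologous rel~$\partial N$ to $\Sigma_i$, which lies entirely in~$\partial N$; hence $[F_i^{0},\partial F_i^{0}]=0$ in $H_2(N,\partial N)$ already, and you may take $\Sigma_i'=\varnothing$.
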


 For short, as for characters on links, we will speak about the character $\omega$ on $F$ and say that $\omega$ assigns $\omega_i$
 to the component $F_i$.

We postpone the proof of this statement till Section~\ref{proof:bounding}.

Now, we are ready to define the main object of
study in this paper.

\begin{de}\label{sign}
The \emph{signature} of a $\mu$-colored link $L\subset\Ss$ is the map
$$
\begin{array}{ccll}
\sigma_L\colon  & \CT^\mu & \longrightarrow &\Zz \\
 & \vect\omega & \longmapsto & \sign^{\vect\omega}(N,F) - \sign(N),
\end{array}
$$
where $N$ and~$F$ are as in Proposition~\ref{p:bounding}.
\end{de}

The signature of a $\mu$-colored link in $\Ss$ is related to
invariants previously
defined by Gilmer \cite{G}, Smolinski \cite{S}, Levine \cite{Le}
and the first author \cite{F}. The interested reader can find detailed
history in \cite{CF}. In the case
where $\Ss=S^3$,
the
signature considered in this paper coincides with the signature defined by
Cimasoni--Florens \cite{CF} for $\omega\in\CT$ with
$\omega_i\neq1$ for all $i=1, \dots, \mu$.
In our present work we shall deal also with the
case $\omega_{i}=1$. The following remark should be clear from the definition
of the signature of a colored link.

\begin{rmk}\label{r:one}
Let $L$ be a $\mu$-colored link in $\Ss$,
and let $\vect\omega \in \CT^\mu$ be a vector such that $\omega_{i}=1$.
Then, the following equality
holds:
$$
\sigma_L(\dots, 1, \dots)=
\sigma_{L_1\cup\ldots\cup\hat{L}_i\cup\ldots\cup L_{\mu}}(\dots,\hat1,\dots).
$$
\end{rmk}

Another important observation is the fact
that the coloring of the link is essential: it is not enough to
merely assign a value of a character to each component of the link.
More precisely,
we have the following relation (whose proof for $S^3$ found in~\cite{CF}
extends to integral homology spheres almost literally:
the
extra term is due to the perturbation of the union $F_\mu\cup F_{\mu+1}$ of two
components of the ramification locus into a single surface).

\begin{propo}[{see~\cite[Proposition~2.5]{CF}}]\label{prop:coloring}
Let $L:=L_1\cup\ldots\cup L_{\mu+1}$ be a $(\mu+1)$-colored link, and
consider the $\mu$-colored link $L':=L'_1\cup\ldots\cup L'_\mu$ defined via
$L'_i=L_i$ for $i<\mu$ and $L'_\mu=L_\mu\cup L_{\mu+1}$. Then, for any
character $\vect\omega\in\CT^\mu$, one has
$$
\sigma_{L'}(\vect\omega)=
 \sigma_L(\omega_1,\ldots,\omega_\mu,\omega_\mu)-\lk(L_\mu,L_{\mu+1}).
$$
\end{propo}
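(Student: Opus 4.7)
The plan is to adapt the argument of Cimasoni--Florens \cite[Prop.~2.5]{CF} to our homology-sphere setting, the only new feature being that the ramification surfaces $F_\mu$ and $F_{\mu+1}$ may have transverse double points which must be smoothed before they can be merged into a single component.

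First, I would pick a bounding pair $(N,F)$ for the $(\mu+1)$-colored link $L$ as provided by Proposition~\ref{p:bounding}, so that $F=F_1\cup\ldots\cup F_{\mu+1}$ is properly immersed with $\partial F_i=L_i$, and use it to compute $\sigma_L(\omega_1,\ldots,\omega_\mu,\omega_\mu)$. To compute $\sigma_{L'}$, I need a second properly immersed surface $F'\subset N$ whose $\mu$-th component has boundary $L_\mu\cup L_{\mu+1}$. The natural candidate is to take $F'_i=F_i$ for $i<\mu$ and to form $F'_\mu$ from $F_\mu\cup F_{\mu+1}$; the obstruction is precisely the finite set $F_\mu\cap F_{\mu+1}$ of transverse double points. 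Since both $F_\mu$ and $F_{\mu+1}$ are null-homologous in $H_2(N,\partial N)$, the algebraic count of these double points equals the intersection number $[F_\mu]\cdot [F_{\mu+1}]$, which in turn coincides with $\lk_\Ss(L_\mu,L_{\mu+1})$ (for instance, by pushing one of the surfaces off the boundary and using the standard relation between intersection and linking numbers in a homology ball).

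Next, I would smooth each intersection point $p\in F_\mu\cap F_{\mu+1}$ in a local model $D^2\times D^2$ where the two sheets appear as $D^2\times\{0\}$ and $\{0\}\times D^2$, replacing them by a single embedded annulus whose isotopy type is dictated by $\sign(p)$. The resulting $F'=F_1\cup\ldots\cup F_{\mu-1}\cup F'_\mu$ is properly immersed, has the required boundary, and, after this surgery, the meridians of the two originally distinct components become identified so that the character $(\omega_1,\ldots,\omega_\mu)\in\CT^\mu$ on $L'$ extends uniquely to a character on $H_1(N\setminus F')$ pulling back to the character $(\omega_1,\ldots,\omega_\mu,\omega_\mu)$ on $H_1(N\setminus F)$ under the collapse map. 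Therefore both signatures $\sign^{\vect\omega}(N,F)$ and $\sign^{\vect\omega'}(N,F')$ can be compared inside the same $4$-manifold with the same underlying ordinary signature $\sign(N)$.

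The key computation is then purely local, one model per intersection point $p$. Excising a small neighborhood of $p$ and computing the twisted signature of the complement versus that of the smoothed model, one finds that smoothing a single positive (resp.\ negative) double point changes the twisted signature by $-1$ (resp.\ $+1$), independently of the value $\omega_\mu\ne0$; summing over all intersection points gives
\[
\sign^{\vect\omega'}(N,F')-\sign^{\vect\omega}(N,F)=-\lk(L_\mu,L_{\mu+1}),
\]
which, after subtracting $\sign(N)$ from both sides, yields the stated formula. The main obstacle is this local model calculation: one must verify that the change in the equitypical part $V^\chi_2$ of the ramified cover under a single intersection-point smoothing contributes exactly $\pm1$ to the twisted signature, with a sign matching the intersection sign. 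This is a direct computation on a $D^2\times D^2$ model with its cyclic cover, entirely parallel to the one in \cite{CF}; the homology-sphere ambient space plays no role beyond ensuring that the total algebraic count of double points equals $\lk(L_\mu,L_{\mu+1})$.
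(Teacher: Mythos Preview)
Your approach is correct and is precisely what the paper has in mind: the paper does not give a detailed proof but only remarks that the argument of \cite[Proposition~2.5]{CF} ``extends to integral homology spheres almost literally,'' with the extra term arising ``due to the perturbation of the union $F_\mu\cup F_{\mu+1}$ of two components of the ramification locus into a single surface.'' Your smoothing of the transverse double points and the local $\pm1$ contribution per intersection point is exactly this perturbation; the only caveat is that your phrasing ``$[F_\mu]\cdot[F_{\mu+1}]$'' is infelicitous (these relative classes vanish), though the conclusion that the algebraic count of double points equals $\lk(L_\mu,L_{\mu+1})$ is correct once one uses the null-homology condition from Proposition~\ref{p:bounding} to compare with a pushed-in Seifert surface.
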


\begin{cor}\label{c:coloring}
The multivariate signature of a generalized Hopf link $H_{m,n}$
does not depend on the coloring, provided that
linked components are given distinct colors.
\end{cor}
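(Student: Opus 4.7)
The plan is to reduce any admissible coloring, i.e., one in which linked components receive distinct colors, to the maximal coloring in which each component of $H_{m,n}$ gets its own color, using Proposition~\ref{prop:coloring} iteratively, and to verify that all correction terms vanish.

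First I would observe that the hypothesis of the corollary forces each color class to consist entirely of components lying on the same side of $H_{m,n}$. Indeed, any two components on opposite sides are linked with linking number $\pm 1$, so by hypothesis they must be colored differently. Two components on the same side, on the other hand, are parallel copies on the boundary of a tubular neighborhood of the core; since the longitudes used in the construction of $H_{m,n}$ are the canonical (Seifert) longitudes of the unknot, any two such parallel copies are unlinked in $S^3$.

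Next I would invoke Proposition~\ref{prop:coloring} repeatedly: starting from the maximal coloring and merging pairs of color classes on the same side one at a time, each step introduces a correction $-\lk(L_i,L_j)=0$ by the preceding observation. Hence the signature associated to any admissible coloring, evaluated at a character $\omega$, equals the signature associated to the maximal coloring, evaluated at the componentwise character obtained by giving each component the value that $\omega$ assigns to its color class.

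In particular, combining this with formula~\eqref{eq.Hopf.generalized}, one obtains for any admissible coloring the closed expression $\sigma_{H_{m,n}}=\defect_\nu(\vect v)\,\defect_\lambda(\vect u)$, where $\vect v$ and $\vect u$ are the componentwise character values on the $V$-side and the $U$-side respectively. This depends only on the componentwise character and not on the partition of components into color classes, which is precisely the content of the corollary. The only genuine content of the argument is the vanishing of same-side linking numbers; everything else is routine bookkeeping with Proposition~\ref{prop:coloring} and Lemma~\ref{lem:defect}\iref{defect:sym}.
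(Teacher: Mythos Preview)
Your argument is correct and is precisely the one the paper intends: the corollary is stated without proof as an immediate consequence of Proposition~\ref{prop:coloring}, and the only thing to check is that same-side components of $H_{m,n}$ have linking number~$0$, which you do. The final paragraph invoking~\eqref{eq.Hopf.generalized} is superfluous and mildly circular (in the paper that formula is derived \emph{from} the recoloring step you are proving), but it is only an ``in particular'' remark and does not affect the validity of your proof.
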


In particular,
Proposition \ref{prop:coloring}
provides a relation between
the restriction
of the multivariate signature of a colored link
to the diagonal in $\CT^\mu$ and
the Levine--Tristram signature of the underlying monochrome link.

As asserted in the following proposition, the signature of a colored link is
well defined,
i.e.,
independent of the pair $(N,F)$ chosen to compute
it. This is a consequence of Novikov's additivity and the $G$-signature
theorem.

\begin{propo} \label{invariance}
For all $\vect\omega \in \CT^{\mu}$,
the signature of $(\Ss,L)$ at $\vect\omega$
$$ \sigma_L(\vect\omega)=  \sign^{\vect\omega}(N,F) - \sign(N)   $$
does not depend on the pair $(N,F)$.
\end{propo}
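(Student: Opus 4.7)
The plan is to compare two arbitrary bounding pairs $(N_1, F_1)$ and $(N_2, F_2)$ by closing them up. Form the closed oriented $4$-manifold $W := N_1 \cup_\Ss (-N_2)$ containing the closed properly immersed surface $F := F_1 \cup_L (-F_2)$. Proposition~\ref{p:bounding} ensures that $\omega$ extends uniquely to a finite-order character on each $H_1(N_i \smallsetminus F_i)$, and since both extensions agree on the meridians of $L$ they assemble into a well-defined character on $H_1(W \smallsetminus F)$. Novikov's additivity, applied to both the ordinary and twisted signatures introduced in Section~\ref{s:twisted}, yields
$$
  \sign(W) = \sign(N_1) - \sign(N_2), \qquad
  \sign^\omega(W, F) = \sign^\omega(N_1, F_1) - \sign^\omega(N_2, F_2),
$$
so the difference of the two candidate values of $\sigma_L(\omega)$ is precisely $\sign^\omega(W, F) - \sign(W)$. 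The problem reduces to showing that this closed-manifold defect vanishes.

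Next I would establish the componentwise null-homology $[F_i] = 0$ in $H_2(W;\Q)$ for every color $i$, where $F_i \subset W$ denotes the closed surface obtained by gluing the color-$i$ pieces $F_i^{(1)} \subset N_1$ and $-F_i^{(2)} \subset N_2$ along $L_i$. By excision, $H_2(W,\Ss) \cong H_2(N_1,\Ss) \oplus H_2(N_2,\Ss)$, and the class $[F_i]$ maps to $([F_i^{(1)}, L_i], -[F_i^{(2)}, L_i])$; both entries vanish by the third bullet of Proposition~\ref{p:bounding}. Since $\Ss$ is a homology sphere, $H_2(\Ss) = 0$, so the map $H_2(W) \to H_2(W,\Ss)$ is injective and $[F_i] = 0$. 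It follows that $[F_i] \cdot [F_j] = 0$ in $W$ for all $i, j$. Note that each $F_i$ is actually embedded in $W$, its two halves meeting only along $L_i$; only pairs $F_i, F_j$ with $i \ne j$ may intersect transversally at double points.

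The final step is the $G$-signature theorem applied to the cyclic branched cover $W^G \to W$ determined by $\omega$: the defect $\sign^\omega(W, F) - \sign(W)$ is expressed as a $\Q$-linear combination of the intersection numbers $[F_i]\cdot[F_j]$, with coefficients depending only on the character values $\omega_i$. Since every such intersection number vanishes, the defect vanishes as well, completing the proof. The main technical care will be in justifying this formula when the branch locus has transverse double points between components of distinct colors; a clean workaround is to blow up $W$ at each such double point, observe that the resulting corrections to $\sign(W)$ and to $\sign^\omega(W, F)$ are intrinsic to the local two-disk/two-disk model and are both proportional to the local intersection index, so that their difference cancels in the defect, thereby reducing the statement to the classical $G$-signature formula for an \emph{embedded} null-homologous branch surface, whose right-hand side is manifestly zero.
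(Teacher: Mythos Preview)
Your overall architecture matches the paper exactly: glue the two bounding pairs to a closed $(W,F)$, invoke Novikov additivity to reduce to $\sign^\omega(W,F)=\sign(W)$, observe $[F_i]=0$ from the relative vanishing in Proposition~\ref{p:bounding}, and finish with the $G$-signature theorem. The null-homology argument you give is the same as the paper's.

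The gap is in your last paragraph, the blow-up workaround. Blowing up a transverse double point of $F_i\cap F_j$ does separate the proper transforms $\tilde F_i$ and $\tilde F_j$, but the exceptional sphere $E$ carries the character value $\omega_i\omega_j$ on its meridian. Unless $\omega_i\omega_j=1$, the sphere $E$ must itself be adjoined to the branch locus, and it meets each of $\tilde F_i,\tilde F_j$ transversally once. So you have traded one double point for two and have \emph{not} reduced to an embedded branch surface; iterating only pushes the problem further. Moreover, the proper transforms are no longer null-homologous ($[\tilde F_i]^2=-d_i$, the geometric count of blow-ups on $F_i$), so even if you could reach the embedded case the ``right-hand side is manifestly zero'' claim fails. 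The asserted cancellation of the local corrections is therefore not established.

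The paper avoids this entirely by a different choice of group. Rather than the cyclic image of~$\omega$, it takes the full abelian group $G=C_{q_1}\times\cdots\times C_{q_\mu}$, where $q_i$ is the order of~$\omega_i$. The point (stated explicitly in the proof) is that with this $G$ the branched cover $W^G$ is a \emph{smooth} closed manifold: over a double point of $F_i\cap F_j$ the local model is $\C^2$ with the product action of $C_{q_i}\times C_{q_j}$, whose quotient is again smooth. One can then apply the Atiyah--Singer $G$-signature theorem directly and analyze the fixed-point set of each $g\in G$: if $g$ lies in a single factor $C_{q_i}$ the fixed set is $F_i$ and the contribution is proportional to $[F_i]^2=0$; if $g$ lies in exactly two factors the fixed set is $F_i\cap F_j$ and the contribution is proportional to $[F_i]\cdot[F_j]=0$; otherwise the fixed set is empty. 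A short character-sum identity (equation~\eqref{eq:s-s} in the paper) then converts the vanishing of all $\sign(g,W)$, $g\ne1$, into $\sign^\omega(W,F)=\sign(W)$. Replacing your blow-up paragraph with this argument closes the gap.
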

\begin{proof}
Given two pairs $(N',F')$ and $(N'',F'')$ as in Proposition~\ref{p:bounding},
consider $W:=N'\cup_\partial{-N''}$ and $F:=F'\cup_\partial{-F''}\subset W$.
By Novikov's additivity, the statement of the proposition would follow if we
show that $\sign^\omega(W,F)=\sign W$.

To compute the twisted signature, we can use the group
$G:=C_{q_1}\times\dots\times C_{q_\mu}$,
where $q_i$ is the order of $\omega_i$, $i=1,\dots,\mu$, see
Remark~\ref{rem:G}. Crucial is the fact that, under the assumptions on
$(W,F)$, this group results in a smooth closed manifold~$W^G$.

Consider the equitypical decomposition of the $\C[G]$-module
\begin{equation}
H:=H_2(W^G;\C)=\bigoplus_\rho V^\rho,
\label{eq:V-rho}
\end{equation}
where $\rho$ runs over all multiplicative characters $G\to\C^*$.
Since the intersection index form $\iif$ is $G$-invariant, this decomposition
is orthogonal. Denote by $\sign V^\rho$ the signature of the restriction of
the form to~$V^\rho$. By Remark~\ref{rem:G}, we have
$\sign^\omega(W,F)=\sign V^\omega$.

The argument below is a slight generalization of~\cite{Ro}
(see also \cite[Lemma 2.1]{CG}).

Each space $V^\rho$ can further be decomposed (not canonically) into the
orthogonal sum of two subspaces $V^\rho_+$ and $V^\rho_-$ with, respectively,
positive and negative definite restriction of $\iif$.
Summation over all characters gives us a $G$-invariant decomposition
$H=H_+\oplus H_-$. Recall that the \emph{$G$-signature} of an element
$g\in G$ is
$\sign(g,W):=\trace g_*|_{H_+}-\trace g_*|_{H_-}\in\C$.
It is well defined; in fact, using~\eqref{eq:V-rho}, we have
$$
\sign(g,W)=\sum_\rho\rho(g)\sign V^\rho.
$$
Multiplying this by $\bar\omega(g)$ and summing up over all $g\in G$,
we arrive at
$$
|G|\sign V^\omega=\sum_{g\in G}\bar\omega(g)\sign(g,W)
 =|G|\sign V^1+\sum_{g\ne1}(\bar\omega(g)-1)\sign(g,W).
$$
(Recall that irreducible characters are orthogonal. For the second equality,
we use the identity $\sum_g\sign(g,W)=|G|\sign V^1$, $g\in G$, which is
the first equality with $\omega\equiv1$.) By the usual transfer argument,
$\sign V^1=\sign W$. Summarizing, we conclude that
\begin{equation}
\sign^\omega(W,F)-\sign(W)=\frac1{|G|}\sum_{g\ne1}(\bar\omega(g)-1)\sign(g,W)
\label{eq:s-s}
\end{equation}
is a linear combination of the $g$-signatures $\sign(g,W)$ with $g\in G$ and
$g\ne1$.

Since $W$ is a smooth manifold, we can use the $G$-signature
theorem~\cite{AS,Go}, which expresses the $g$-signature $\sign(g,W)$ in terms of
the fixed point set of~$g$.
We use repeatedly the fact that each surface~$F_i$ is connected and the
covering is ''uniform'' along~$F_i$; hence, the extra factor appearing in the
$G$-signature theorem depends on the element $g\in G$ only and does not
depend on a particular component of the fixed point set.

If $1\ne g\in C_{q_i}$ lies in one of the factors of~$G$, its fixed point set
is $F_i$ and $\sign(g,W)$ is a multiple of $[F_i]^2$.
By Proposition~\ref{p:bounding},
$[F_i^*,\partial F_i^*]=0\in H_2(N^*,\partial N^*)$ for $*=\prime$ or
$\prime\prime$; hence, $[F_i]=0$ and
$\sign(g,W)=0$.

If $g\in C_{q_i}\times C_{q_j}$ lies in the product of two factors (but not
in either of them), the fixed point set is $F_i\cap F_j$ and $\sign(g,W)$ is
a multiple of $\<[F_i],[F_j]\>=0$ (since, as above,
$[F_i]=[F_j]=0$).

In all other cases, the fixed point set is empty (there are no triple
intersections); hence, $\sign(g,W)=0$.
Summarizing, $\sign(g,W)=0$ whenever $g\ne1$;
in view of~\eqref{eq:s-s},
this implies that $\sign^\omega(W,F)=\sign W$ and concludes the proof.
\end{proof}

\subsection{Proof of Proposition~\ref{p:bounding}}\label{proof:bounding}

Consider an integral surgery presentation for $\Ss$ given by a framed oriented link $T=T_{1}\cup\ldots\cup T_{k}$ in $S^{3}$. Since $\Ss$ is a homology sphere, we may assume that $T$ is algebraically split and that the surgery coefficients of each of its components are $\pm 1$ \cite[Theorem A]{b:Mat}. The link $L$ can be represented by a collection of curves in $S^{3}\smallsetminus T$.

Let $N$ be the $4$-manifold obtained
by attaching $2$-handles to $B^4$ along
the components of
$T$ according to their framings. Since the linking matrix of $T$ is diagonal
with $\pm 1$ entries, we may slide the knots in $L$ over the attached handles
in order to obtain a presentation of $L$ in $\Ss$ such that
$\lk_{\Ss}(L_{i},T_{j})=0$ for all $i=1,\dots,\mu$ and $j=1,\dots,k$. Since
all $L_{i}$ are disjoint from the attaching tori of the handles, we can
consider a surface $F$ in $B^4$, the $0$-handle of $N$, such that $F$ is a
union of compact connected oriented smooth surfaces $F_1,\dots,F_\mu$, and
each $F_i$ is smoothly embedded with $\partial F_i=L_i$.

We have the following commutative diagram:
$$
\begin{CD}
0=H^{1}(N)@>>>H^1(N\smallsetminus F)@>>>
 H^2(N,N\smallsetminus F)@>>>H^2(N)\\
@.@.@VVV@VVV\\
@.@.H_{2}(F,\partial F)@>i_*>>H_{2}(N,\partial N),
\end{CD}
$$
where, by Alexander and Lefschetz duality, the two
vertical arrows are
isomorphisms and the
inclusion homomorphism~$i_*$ is trivial, as
$\lk_{\Ss}(L_{i},T_{j})=0$ and thus
$[F_{i},\partial F_{i}]=0\in H_{2}(N,\partial N)$ for all $i=1,\dots,\mu$.
It follows that
$H^{1}(N\smallsetminus F)$ is
canonically
isomorphic to
$H_{2}(F,\partial F)=\Z^{\mu}$,
and
the latter
group is
freely
generated by
the fundamental classes $[F_i,\partial F_i]$.
Repeating the same computation over the finite field $\Bbb F_p$, we get
$H^1(F\smallsetminus F;\Bbb F_p)=H_{2}(F,\partial F;\Bbb F_p)$ and, since the
dimension of this vector space does not depend on~$p$, we conclude that
the homology group
$H_1(F\smallsetminus F)=\operatorname{Hom}(H_1(F\smallsetminus F),\Zz)$ is
freely generated by the elements of the dual basis, i.e.,
the meridians $\bar m_{i}$ of the components $F_i\subset N$.
\qed

\section{Proof of Theorem~\ref{t:main'}}\label{s:proofs}

\subsection{The auxiliary Hopf link}\label{s:associated}

In
the proof of Theorem~\ref{t:main'} it will be useful to have some control
over the surface $F$ used to compute the colored signatures;
namely,
sometimes we want the distinguished component~$K$ to bound a disk.
The proof
of the following lemma is a straightforward adaptation of the proof of
Proposition~\ref{p:bounding}.

\begin{lemma}\label{l:disc}
Let $K \cup L$ be a $(1,\mu)$-colored link in $\Ss$. Then, the pair $(N,F)$
in Proposition \ref{p:bounding} can be chosen of the form $(N,D\cup F)$, where $D$ is a disk, $K=\partial D$ and $L_{i}=\partial F_{i}$.
\end{lemma}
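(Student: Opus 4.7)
The plan is to mirror the proof of Proposition~\ref{p:bounding}, with the extra step of arranging that the distinguished component $K$ appears as an unknot in the $S^3$-picture of the surgery presentation. First, I would choose an integral surgery presentation of $\Ss$ by an algebraically split, $\pm1$-framed link $T=T_1\cup\ldots\cup T_k\subset S^3$, represent $K\cup L$ by curves in $S^3\smallsetminus T$, and perform handle slides of $K$ and $L$ over the components of $T$ so that $\lk_\Ss(K,T_j)=0$ and $\lk_\Ss(L_i,T_j)=0$ for all $i$ and $j$ (exactly as in the proof of Proposition~\ref{p:bounding}, now applied to all of $K\cup L$).

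Next, I would modify the $S^3$-presentation to make $K$ an unknot without disturbing the preceding conditions. This is achieved by a sequence of blow-up/blow-down operations on small crossing circles: for each crossing of $K$ that must be switched in order to unknot it, insert a small $\pm1$-framed unknotted circle $c$ encircling that crossing and disjoint from $T\cup L$, then immediately blow it down. Each such operation is a change of surgery presentation of the same pair $(\Ss,K\cup L)$; it preserves the algebraic splitting and the $\pm1$ framings, realises a crossing change on $K$ in $S^3$, and leaves every $\Ss$-linking number unchanged, since $c$ is algebraically unlinked from~$T$ and~$L$. Iterating yields a presentation in which $K$ is the unknot in $S^3\smallsetminus T$ while all the previous vanishing conditions are retained.

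Given such a presentation, I would build $N:=B^4\cup(\text{$2$-handles along }T)$ as in the proof of Proposition~\ref{p:bounding}. Since $K$ is unknotted in $S^3\smallsetminus T\subset\partial B^4$, it bounds a smooth flat disk $D_0\subset S^3$; pushing $D_0$ slightly into the interior of $B^4$ yields the required disk $D\subset N$ with $\partial D=K$. Construct the smooth surfaces $F_i\subset B^4$ with $\partial F_i=L_i$ exactly as in Proposition~\ref{p:bounding}, and perturb so that $D$ and the $F_i$'s meet each other transversely at isolated points in the interior of $B^4$. Then $D\cup F_1\cup\ldots\cup F_\mu$ is properly immersed in the sense of Section~\ref{s:twisted}, and the same commutative diagram argument, now with $K$ included as an additional component, gives the freeness of $H_1(N\smallsetminus(D\cup F))\cong\Z^{\mu+1}$ on the meridians together with the vanishing of $[D,\partial D]$ and $[F_i,\partial F_i]$ in $H_2(N,\partial N)$.

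The main subtlety is the compatibility of the two modifications: killing the $\Ss$-linkings by handle slides (which can reknot $K$ in $S^3$) and unknotting $K$ by crossing-circle blow-ups/downs (which must not revive any nonzero linking). The moves decouple in the order described because the crossing circles are supported in small balls around crossings of $K$, disjoint from $T$ and $L$; hence their introduction does not affect $\lk_\Ss(-,T_j)$ or $\lk_\Ss(-,L_i)$, and no further slides over $T$ are required after the crossing changes.
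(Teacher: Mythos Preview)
Your overall strategy---arrange, via crossing circles, that $K$ appears as an unknot in the $S^3$-surgery picture---is the same as the paper's, but your implementation of the crossing change is flawed. The composite move ``insert a $\pm1$-framed circle $c$ around a crossing of $K$, then blow it down'' does \emph{not} give a new presentation of the same pair $(\Ss,K\cup L)$. If ``insert'' means simply adding $c$ (linking $K$, disjoint from $T\cup L$) to the surgery link, this already changes the pair: the small disk bounded by $c$ survives into $\Ss$, so $\pm1$-surgery on $c$ returns $\Ss$, but the identification $\Ss_c\cong\Ss$ twists $K$ near that disk; the subsequent (legitimate) blow-down then preserves this \emph{wrong} pair $(\Ss,K'\cup L)$. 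If instead ``insert'' is a genuine Kirby blow-up, then $c$ comes with a compensating twist on the strands it links, and blow-up followed by blow-down is the identity on the diagram. Either way, after your sequence the surgery link is back to the original $T$, so the knot drawn in $S^3\smallsetminus T$ determines the knot in $\Ss$; you cannot have simultaneously unknotted $K$ in $S^3$ and kept it equal to $K$ in $\Ss$. (Try $\Ss=S^3$, $T=\varnothing$, $K$ the trefoil: your recipe produces a disk bounded by the unknot, not by the trefoil.)

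The paper's fix is to \emph{keep} the crossing circles $C_1,\dots,C_t$ in the surgery link rather than blowing them down. One starts with an unknot $U$ in $S^3\smallsetminus T$ and chooses the $C_i$ (small, $\pm1$-framed, disjoint from $T\cup L$, with $\lk(C_i,U)=\lk(C_i,C_j)=0$) so that surgery on the $C_i$ realises the crossing changes turning $U$ into $K$; then surgery on $T\cup C$ applied to $U\cup L$ yields exactly $(\Ss,K\cup L)$. The $4$-manifold $N$ thus acquires extra $2$-handles along the $C_i$, the disk $D$ is pushed in from the spanning disk of $U$ in $S^3$, and the homological argument of Proposition~\ref{p:bounding} goes through verbatim with the enlarged surgery link. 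The step your blow-downs erase---those extra $2$-handles---is precisely what makes $\partial D=K$ in $\Ss$ rather than some other knot.
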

\begin{proof}
As explained in the proof of Proposition~\ref{p:bounding}, we can consider an integral surgery presentation for $\Ss$ given by a framed oriented link $T=T_{1}\cup\ldots\cup T_{k}$ in $S^{3}$, where $T$ is algebraically split and the surgery coefficients of each of its components are $\pm 1$. Moreover, the link $K\cup L$ can be represented by a collection of curves in $S^{3}\smallsetminus T$ such that $\lk(K,T_{i})=\lk(L_{j},T_{i})=0$ for all $i,j$.

Notice that we can obtain $K\cup L\subset\Ss$ by starting with $U\cup L\subset S^{3}\smallsetminus T$, where $U$ is the unknot, and performing surgery on unknotted curves $C_{1},\dots, C_{t}$ in $S^{3}\smallsetminus (T\cup U\cup L)$ with framings $\epsilon_{i}=\pm 1$ to do some crossing changes on $U$ to obtain $K$. It is clear that we might assume $\lk(U,T_{i})=0$ for all $i$ and that the curves $C_{i}$ may be chosen such that $\lk(C_{i},C_{j})=0$ if $i\neq j$ and $\lk(C_{i},T_{j})=\lk(C_{i},U)=\lk(C_{i},L_{j})=0$ for all $i$ and $j$.

 The link $U\cup L$ in $S^{3}$ bounds a
 properly immersed surface $D\cup F_1\cup\ldots$ in $B^4$.
Indeed, one has
\begin{enumerate}
\item
$L_{i}=\partial F_i=F_i\cap\partial B^{4}$ and $U=\partial D=D\cap\partial B^{4}$;
\item
$D$ and each surface~$F_i$ are transversal to~$\partial B^{4}$, and
\item
distinct surfaces intersect transversally, at double points,
and away from~$\partial B^{4}$.
\end{enumerate}

Consider the 4--manifold $N$ obtained by attaching 2--handles to $B^{4}$
along the components of $T\cup C_{1}\cup\ldots\cup C_{t}$ according to their
framings. By construction we obtain the link $K\cup L$ sitting in
$\Ss=\partial N$ and bounding $F$. Moreover, the above conditions on the
linking numbers guarantee that the proof of Proposition~\ref{p:bounding}
follows word by word with the manifold $N$ and the surface $F$ considered in
this proof.
\end{proof}

Let
$(N,D\cup F)$ be the pair constructed in Lemma~\ref{l:disc} and fix a tubular
neighborhood $B\cong D \times B^2$ of $D$ in $N$, see
Figure~\ref{fig:special_case}.
Without loss of generality, by taking~$B$ small enough,
we may assume that, \emph{up to orientation of the components},
the pair $(B, (D \cup F) \cap B)$ has boundary $(S^3,H_{1,m})$, where $m$ is
the number of points in $D \cap F$. The components of
$H_{1,m}=V\cup U_1\cup\ldots\cup U_m$
inherit an orientation from $D\cup F$,
and we color them
according to the decomposition $D \cup F_1\cup\ldots\cup F_\mu$.

Assume that the original link is given a character $(\vect v,\vect u)$.
This character extends to $D\cup F$ and restricts to a character, also
denoted by $(\vect v,\vect u)$, on $H_{1,m}$.
Occasionally, we will replace~$D$ with several parallel copies,
obtaining a link $H_{n,m}$, and change the
character on the $V$-part of $H_{n,m}$, while keeping~$\vect u$ on the
$U$-part.
We always assume that linked components are given distinct colors,
but we allow a nonstandard orientation of the $V$ part, describing it
by a linking vector $\nu$, cf.\ the paragraph prior
to~\eqref{eq.Hopf.generalized}.

\begin{lemma}\label{lem.new}
Consider the link $H_{n,m}=V\cup U$
equipped with the coloring, orientation, and character $\vect u$
on the $U$-part as explained above.
  Then, for any character~$\vect v$ on the $V$-part and any
  linking vector~$\nu$, one has
$$
\sigma_{H_{n,m}}(\vect v,\vect u)=-\delta_\nu(\vect v)\delta_\lambda(\vect u),
$$
where $\lambda:=\vlk(K,L)$.
\end{lemma}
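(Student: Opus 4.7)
My plan is to compute $\sigma_{H_{n,m}}(\vect v,\vect u)$ directly using Definition~\ref{sign} applied to the pair $(B,(D\cup F)\cap B)$, which by the construction of Section~\ref{s:associated} provides a valid bounding manifold with the Hopf link $H_{n,m}$ sitting on $\partial B$, carrying the coloring, orientation, and character prescribed in the statement. If needed, I would first enlarge the setup to $n$ parallel copies of~$D$ (with orientations described by~$\nu$) inside the tubular neighborhood $B\cong D\times B^2$; these copies remain disjoint and still intersect $F$ transversally, and the boundary link becomes exactly $H_{n,m}$ with the required data.

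The key simplification is that $B$ is diffeomorphic to a 4-ball, so $\sign(B)=0$, which reduces the problem to the evaluation of the twisted signature $\sign^{(\vect v,\vect u)}(B,(D\cup F)\cap B)$. I would then appeal to Theorem~\ref{t:hopf} and its extension~\eqref{eq.Hopf.generalized}, which give the signature of a generalized Hopf link equipped with arbitrary component orientations as a product $\defect_\nu\cdot\defect_{\lambda'}$ of defect functions. In the present construction, the linking vector~$\nu$ of the $V$-part is the prescribed one; each $U$-component is a meridional circle at a point of $D\cap F$, and grouping these by the color induced from $F=F_1\cup\cdots\cup F_\mu$ and summing the $\pm 1$ linking numbers within color~$j$ recovers $\lk(K,L_j)=\lambda_j$. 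By invariance of $\defect$ under the grouping of coordinates carrying equal characters, which is immediate from its definition, the factor $\defect_{\lambda'}(\vect u)$ simplifies to $\defect_\lambda(\vect u)$ with $\lambda=\vlk(K,L)$.

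The main subtlety is the overall sign. The orientations of the Hopf link components arise from the orientations of $D$ and $F$ inside $N$, together with the product identification $B\cong D\times B^2$ and the orientation of~$\partial B$ inherited from $B\subset N$; these conventions differ by a reversal (of either the $V$-part or the $U$-part) from those underlying~\eqref{eq.Hopf.generalized}, which equivalently amounts to assigning complex-conjugate characters to the reversed components. By Lemma~\ref{lem:defect}\iref{defect:-}, this conjugation sends $\defect$ to $-\defect$, producing the minus sign in the statement. The bulk of the remaining work is the careful orientation bookkeeping in the local model $B=D\times B^2$ near each intersection point of $D$ with $F$, which I expect to be the only real obstacle since everything else is a direct application of previously established formulas.
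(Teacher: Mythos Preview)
Your approach is essentially the same as the paper's: both reduce the statement to formula~\eqref{eq.Hopf.generalized} together with the observation that, after separating colors on the $U$-part, the $\pm1$ linking entries within each color group sum to~$\lambda_i$, so that the defect collapses to $\delta_\lambda(\vect u)$. Your preliminary detour through the pair $(B,(D\cup F)\cap B)$ and Definition~\ref{sign} is unnecessary once you invoke~\eqref{eq.Hopf.generalized} directly, but your explicit handling of the sign via orientation conventions and Lemma~\ref{lem:defect}\iref{defect:-} is if anything more careful than the paper's one-line justification.
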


\begin{proof}
The $U$-part of the link can be described as follows:
for each $i=1,\ldots,\mu$, there is a number of components, all carrying the
same color and character~$\omega_i$, oriented in a random way but so that
the entries of the linking vector
(with respect to a fixed positive component of the $V$-part) sum up
to~$\lambda_i$. (These components correspond to the geometric intersection
points, and their orientation reflects the sign of the intersection.)
Hence, the statement is an immediate
consequence of~\eqref{eq.Hopf.generalized} and
the definition of~$\delta$, as the copies of $\pm\Log u_i$ would sum up
to $\lambda_i\Log u_i$.
\end{proof}

%




%

\subsection{A special case}\label{s:special}
The
next lemma is straightforward; it is stated for references.
We will
use it to apply Wall's Theorem \ref{additivity}. Certainly, the statement on $H_0^\chi(X)$
extends to any topological space~$X$, whereas that on $H_1^\chi(X)$ extends
to any space with abelian fundamental group.

\begin{lemma}\label{lem:torus}
Let $X\cong T^2$ be a $2$-torus and $\chi\colon H_1(X)\to\Cc^*$ a multiplicative
character. Then $H_1^\chi(X)=H_1(X;\Cc)$, $H_0^\chi(X)=H_0(X;\Cc)$ if $\chi\equiv1$
and $H_1^\chi(X)=H_0^\chi(X)=0$ otherwise.
\end{lemma}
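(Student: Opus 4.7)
My plan is to prove the lemma by a direct computation with the twisted cellular chain complex of $T^{2}$. I would fix the standard CW structure on $X$ with one $0$-cell $v$, two $1$-cells $a,b$, and one $2$-cell $c$ attached along the commutator $aba^{-1}b^{-1}$. Writing $\alpha:=\chi(a)$ and $\beta:=\chi(b)$, the twisted cellular chain complex reads
$$
0\longrightarrow\C\xrightarrow{\,d_2\,}\C^{2}\xrightarrow{\,d_1\,}\C\longrightarrow0,
$$
where, by the usual covering-space/Fox calculus recipe, $d_1=(\alpha-1,\beta-1)$ and $d_2=\bigl(1-\beta,\alpha-1\bigr)^{T}$ (the entries are the images under $\chi$ of the Fox derivatives of $aba^{-1}b^{-1}$ with respect to $a$ and $b$). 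The rest of the proof is reading homology off this short complex.

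If $\chi\equiv1$, then both $d_{1}$ and $d_{2}$ vanish and we recover the untwisted homology $H_{0}(X;\C)=\C$, $H_{1}(X;\C)=\C^{2}$, matching the first clause of the lemma (the claim only concerns degrees $0$ and $1$, so $H_{2}$ is irrelevant).

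Assume now $\chi\not\equiv1$, so that $(\alpha,\beta)\ne(1,1)$. Then at least one entry of $d_{1}$ is nonzero, hence $d_{1}$ is surjective and $H_{0}^{\chi}(X)=0$. For $H_{1}^{\chi}(X)$, note that $\ker d_{1}$ is one-dimensional and contains the vector $(1-\beta,\alpha-1)$ (direct check: $(\alpha-1)(1-\beta)+(\beta-1)(\alpha-1)=0$); since this vector is exactly $d_{2}(c)$ and is nonzero whenever $\chi\ne1$, we get $\ker d_{1}=\operatorname{im}d_{2}$ and hence $H_{1}^{\chi}(X)=0$.

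There is no real obstacle here: the only place to be careful is the sign/ordering convention for $d_{2}$ arising from the $2$-cell attaching map, but any consistent convention yields a vector proportional to $(1-\beta,\alpha-1)$, so the conclusion $\operatorname{im}d_{2}=\ker d_{1}$ is forced by dimension once one checks nontriviality. As the paper's parenthetical remark suggests, the $H_{0}$ argument uses only that $\chi$ takes some value $\ne1$ (so it extends to any space), while the $H_{1}$ argument uses crucially that $\pi_{1}(X)$ is abelian, i.e.\ that the attaching map of $c$ is a commutator, which is what produces the precise relation between $\operatorname{im}d_{2}$ and $\ker d_{1}$.
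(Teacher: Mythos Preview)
Your argument is correct. The paper does not actually give a proof of this lemma---it simply declares it ``straightforward; stated for references''---so there is no paper proof to compare against. Your computation via the twisted cellular chain complex (equivalently, Fox calculus applied to the relator $aba^{-1}b^{-1}$) is the standard direct verification, and the checks $d_1d_2=0$, $\operatorname{rk}d_1=\operatorname{rk}d_2=1$ for $\chi\ne1$ are exactly right.

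If anything, the route the authors probably had in mind is even shorter from their own definition of $H_*^\chi$ via finite coverings (Section~\ref{s:twisted} and Remark~\ref{rem:G}): a finite abelian cover of a torus is again a torus, and the deck transformations, being translations, act trivially on its homology; hence the $\chi$-equitypical summand of $H_*(X^G;\C)$ vanishes unless $\chi\equiv1$. Your approach has the mild advantage that it does not use finiteness of the character at all, and it makes explicit the point the paper alludes to parenthetically: the $H_0$ vanishing needs only that $\chi$ is nontrivial on some loop, while the $H_1$ vanishing genuinely uses that the attaching word is a commutator, i.e.\ that $\pi_1(X)$ is abelian.
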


We start by proving a special case of Theorem~\ref{t:main'}, which will be
useful later on and whose proof contains the key ingredients used
to establish the general formula. In the following lemma we study the effect on the colored signatures
of changing the component $K$ of a $(1,\mu)$-colored
link $K\cup L\subset\Ss$ to a collection of $\nu$
parallel curves, i.e.\ of performing a $(\nu,0)$-cabling.
This operation is equivalent to the splice of
$K\cup L\subset\Ss$ and $H_{1,\nu}\subset S^3$.

Let $\KK \cup L$ be the resulting $(\nu+\mu)$-colored link.
Denote $\lambda:=\vlk(K,L)$
and, for a character
$\vect\omega\in\CT^{\mu}$, let
$\upsilon:=\omega^{\lambda}$.
For a character $\vect\zeta\in\CT^\nu$, let
 $\pi:=\prod_{i=1}^\nu\zeta_i$.

\begin{lemma}\label{cable}
In the notation above, assuming that $(\upsilon,\pi)\ne(1,1)$, one has
$$
\sigma_{\KK\cup L}(\vect\zeta,\vect\omega)=
 \sigma_{K\cup L}(\pi,\vect\omega)
-\defect(\zeta)\defect_\lambda(\omega).
$$
\end{lemma}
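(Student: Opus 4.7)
My plan is to compute both signatures using the \emph{same} ambient 4-manifold, differing only in the bounding surface, so that $\sign(N)$ cancels in the difference
\begin{equation*}
\sigma_{\KK\cup L}(\zeta,\omega)-\sigma_{K\cup L}(\pi,\omega)=\sign^{(\zeta,\omega)}(N,\mathcal{D}\cup F)-\sign^{(\pi,\omega)}(N,D\cup F).
\end{equation*}
By Lemma~\ref{l:disc}, choose $(N,D\cup F)$ bounding $(\Ss,K\cup L)$ with $K=\partial D$, and let $B$ be a tubular neighborhood of $D$ as in Section~\ref{s:associated}, taken large enough that $\nu$ parallel pushoffs $\mathcal{D}_{1},\dots,\mathcal{D}_{\nu}$ of $D$ (with $\mathcal{D}_{1}=D$) fit inside~$B$; the surface $\mathcal{D}\cup F$ then bounds $\KK\cup L$ in $N$. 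In $\partial B\cong S^{3}$, the link bounded by $D\cup F_{B}$ is the generalized Hopf link $H_{1,m}$, while that bounded by $\mathcal{D}\cup F_{B}$ is $H_{\nu,m}$, where $m=\sum_{j}|\lambda_{j}|$.

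I decompose $N=N_{0}\cup_{Y}B$ with $N_{0}:=N\smallsetminus B^{\circ}$ and $Y:=\partial B\cap N_{0}$, a solid torus whose core, seen on $X:=\partial Y=T^{2}$, is isotopic to the meridian $m_{K}$ of $K$ in~$\Ss$. The decomposition is identical for both setups, and I apply Wall's Theorem~\ref{additivity}. In both cases the induced character on $X$ equals $(\pi,\upsilon)$: on $m_{K}$ it is $\pi$ (directly in the original case; in the cable case, $m_{K}$ is homologous in $\Ss\smallsetminus\KK$ to $\sum_{i}m_{\KK_{i}}$, contributing $\prod_{i}\zeta_{i}=\pi$), and on $\ell_{K}$ it is $\upsilon=\omega^{\lambda}$. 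Because $(\pi,\upsilon)\ne(1,1)$ by assumption, Lemma~\ref{lem:torus} gives $H_{1}^{\chi}(X)=0$, so all the subspaces $A_{i}^{\chi}$ vanish and Wall's correction is trivial. Moreover, the characters induced on $N_{0}\smallsetminus (F\cap N_0)$ coincide in the two setups (all meridians $m_{\KK_{i}}$ get identified in $N_{0}$ with the core of~$Y$, carrying value~$\pi$), so the two contributions $\sign^{\chi}(N_{0},F\cap N_0)$ cancel in the difference.

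What remains is the local computation on $B\cong B^{4}$. Since $\sign(B)=0$, Proposition~\ref{invariance} identifies
\begin{equation*}
\sign^{(\pi,\omega)}(B,D\cup F_{B})=\sigma_{H_{1,m}}\qquad\text{and}\qquad\sign^{(\zeta,\omega)}(B,\mathcal{D}\cup F_{B})=\sigma_{H_{\nu,m}},
\end{equation*}
each evaluated at the induced characters (the $V$-part receiving $\pi$ or $\zeta$; the $U$-part the values of $\omega$ distributed over the intersection points of $D$, resp.\ $\mathcal{D}$, with $F$, with signs). Lemma~\ref{lem.new} gives $\sigma_{H_{1,m}}=-\defect(\pi)\defect_{\lambda}(\omega)=0$ by Lemma~\ref{lem:defect}\iref{defect:0}, and $\sigma_{H_{\nu,m}}=-\defect(\zeta)\defect_{\lambda}(\omega)$. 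Hence
\begin{equation*}
\sigma_{\KK\cup L}(\zeta,\omega)-\sigma_{K\cup L}(\pi,\omega)=-\defect(\zeta)\defect_{\lambda}(\omega),
\end{equation*}
which is the claim.

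The main obstacle I expect is the topological bookkeeping to verify that the characters on~$X$ and on~$N_{0}$ really coincide in the two setups, and that $(\upsilon,\pi)\ne(1,1)$ is precisely what makes Wall's correction vanish. Dropping this assumption would require analyzing $K(A_{0}^{\chi},A_{1}^{\chi},A_{2}^{\chi})$ directly, producing the extra discrepancy alluded to in the remark after Addendum~\ref{add:L=0}.
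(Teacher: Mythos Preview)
Your proof is correct and follows essentially the same approach as the paper: the same pair $(N,D\cup F)$ from Lemma~\ref{l:disc}, the same tubular neighborhood~$B$ of~$D$, the same application of Wall's Theorem~\ref{additivity} with the key observation that $(\upsilon,\pi)\ne(1,1)$ kills $H_1^\chi(X)$ via Lemma~\ref{lem:torus}, and the same identification of the local contribution on~$B$ as a generalized Hopf link signature via Lemma~\ref{lem.new}. The only organizational difference is that you take the difference of the two signatures at the outset, so that $\sign(N)$ cancels immediately and you never need the \emph{untwisted} Wall additivity (the paper's equation~\eqref{e:down}); this is a minor streamlining, not a different argument.
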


\begin{figure}\centering
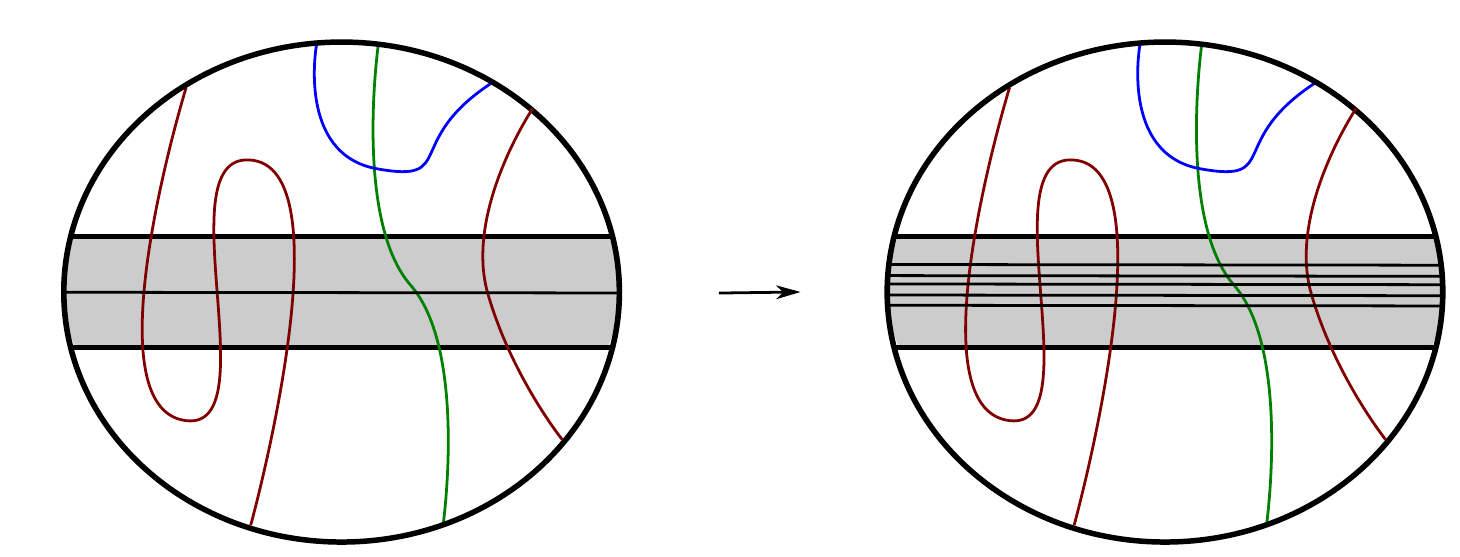
\caption{This diagram represents the pairs $(N,D\cup F)$ and $(N,\DD\cup F)$. The gray band is the four ball $N_{2}=B=D\times B^{2}$.}
\label{fig:special_case}
\end{figure}

\begin{proof}
 The diagram in Figure~\ref{fig:special_case} might
 help one
follow the construction.
Let $(N,D\cup F)$ be the pair constructed in Lemma~\ref{l:disc} for the link
$K \cup L \subset\mathbb S=\partial N$  and fix a tubular
neighborhood $B\cong D \times B^2$ of $D$ in $N$.
The pair $(N,D \cup F)$ can be written as the union
$$
(N\smallsetminus B, F\cap (N\smallsetminus B)) \cup (B, (D \cup F) \cap B)
$$
glued along $(D \times S^1, F \cap \partial B)$.
As explained in Section~\ref{s:associated},
the boundary of $(B, (D \cup F) \cap B)$ is
$(S^3,H_{1,m})$, where $H_{1,m}$ inherits orientations of the components and coloring with the
associated character $(\pi,\omega)$.


We use Wall's Theorem~\ref{additivity} to relate the twisted and non-twisted
signatures of $(N,D \cup F)$ and
$(N\smallsetminus B,F\cap (N\smallsetminus B))$.
To this end, define $N_1=N \smallsetminus B$,
$M_1=\Ss\smallsetminus\iTN(K)$, $N_{2}=B$, $M_{2}=\TN(K)$ and
$M_0= D\times S^1$, where $\TN$ stands for a small tubular neighborhood and
$\iTN$ is its interior. One has
$\partial N_1= M_1 \cup M_0$ and
$\partial N_2=M_2\cup{-M_0}$,
and in both cases the manifolds are glued along
$X:=\partial D\times S^1=K\times S^{1}$.
Let $m$ and $\ell$ be the meridian and longitude
of $K$, which generate $H_{1}(X)$. Following the notation of
Theorem~\ref{additivity}, we have $A_0= A_1=\langle\ell\rangle$ and
$A_2=\langle m\rangle$, which implies that $K(A_0,A_1,A_2) =0$ and
thus
\begin{equation}\label{e:down}
\sign(N)=\sign(N_1 \cup N_2) =\sign(N_{1})+\sign(N_{2})=\sign(N_{1})
\end{equation}
since $N_2$ is contractible.

We now make the corresponding computation with twisted coefficients.

Let $\vrho:=(\pi,\vect\omega)$; we will use the same notation for the
extensions of~$\vrho$ to the other spaces involved.
We need to study the relationship between
$\sign^\vrho(N_1\cup N_2,D\cup F)$ and the twisted signatures of~$N_{1}$
and~$N_{2}$.
The group
$H_1(X)$ is generated by $m,\ell$ and,
since $\rho(m)=\upsilon$ and $\rho(\ell)=\pi$ are not both trivial,
we have $H_1^\vrho(X)=0$, see Lemma~\ref{lem:torus}.
This
trivially implies
$ K\bigl(A^\vrho_0, A^\vrho_1,A^\vrho_2\bigr)=0$,
and Wall's Theorem~\ref{additivity} yields
\begin{equation}\label{e:twist}
\sign^\vrho(N_1\cup N_2,D\cup F)=
 \sign^\vrho(N_{1},F\cap N_{1})+
 \sign^\vrho
 (N_{2},(D\cup F)\cap N_{2}).
\end{equation}

Since the boundary of $(N_{2},(D\cup F)\cap N_{2})$ is
$(S^3,H_{1,m})$, by Lemmas~\ref{lem.new} and \ref{lem:defect}\iref{defect:0}
we have
$$
\sign^\vrho
 {(\pi,\omega)}(N_{2},(D\cup F)\cap N_{2})-\sign(N_{2})=
 \sigma_{H_{1,m}}(\pi,\omega)=\delta(1) \delta_\lambda(\omega)=0.
$$
%
Combining equations~\eqref{e:down}
and~\eqref{e:twist} with Definition~\ref{sign} of signature, we get
\begin{equation}\label{e:first}
\sigma_{K\cup L}(\vrho)=\sign^\vrho(N,D\cup F)-\sign(N)=
 \sign^\vrho(N_1,F\cap N_{1})-\sign(N_1).
\end{equation}

Now, consider the link $\KK\cup L$. We can assume that $\KK$ lies in the
tubular neighborhood $M_{2}=\partial D \times B^2$ of $K$ in $\Ss$. The link
$\KK$ bounds a collection of $\nu$ parallel disks $\DD\subset N_{2}=B$ and
the pair $(N_2,(\DD\cup F)\cap N_{2})$ has boundary $(S^3, H_{\nu,m})$, with the generalized Hopf link $H_{\nu,m}$ carrying the character
$(\zeta,\omega)$ and corresponding orientations.
Similar to~\eqref{e:twist}, one has
%
\begin{equation}\label{e:twistmult}
\sign^{(\vect\zeta,\vect\omega)}(N_1\cup N_2,\DD\cup F)
 =\sign^\vrho(N_1, F\cap N_{1})
 +\sign^{(\vect\zeta,\omega)}(N_2, (\DD\cup F)\cap N_{2}).
\end{equation}
Moreover, we can compute the signature of $H_{\nu,m}$
from the pair $(B,(\DD\cup F)\cap B)$;
thus,
 since $N_2$ is contractible,
\begin{equation}\label{e:hopfm}
\sign^{(\vect\zeta,\omega)}(N_2, (\DD\cup F)\cap N_{2})=
-\defect(\zeta)\defect_\lambda(\omega),
\end{equation}
see  Lemma~\ref{lem.new}.
Using the pair $(N,\DD\cup F)$ to compute the signature of $\KK\cup L$, we have
\begin{align*}
\sigma_{\KK\cup L} (\vect\zeta,\vect\omega)
&\underset{\hphantom{\eqref{e:down},\eqref{e:twistmult}}}=
 \sign^{(\vect\zeta,\vect\omega)}(N,\DD \cup F) - \sign(N)\\
&\underset{\eqref{e:down},\eqref{e:twistmult}}{=}
 \sign^\vrho(N_1, F\cap N_{1}) + \sign^{(\vect\zeta,\omega)}(N_2, (F\cup\DD)\cap N_{2})-\sign(N_{1})\\
&\underset{\eqref{e:first},\eqref{e:hopfm}}{=}
 \sigma_{K\cup L}(\vrho)
-\defect(\zeta)\defect_\lambda(\omega).\qedhere
\end{align*}
%
\end{proof}

\begin{figure}\centering
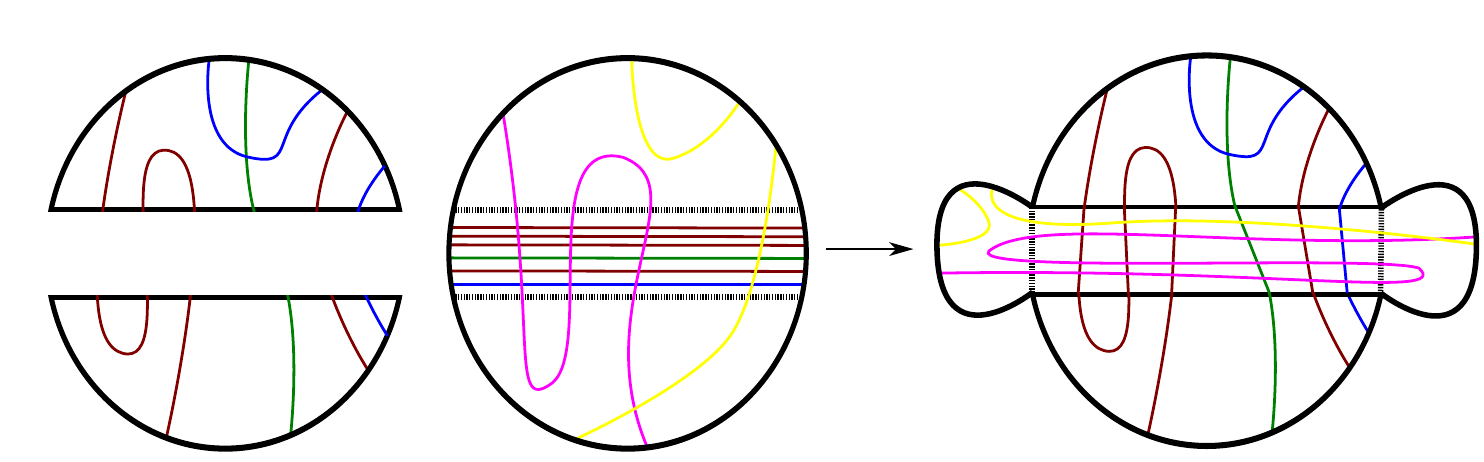
\caption{The third diagram represents the pair $(N,F)$ used to compute the signature of the splice of $K'\cup L'$ and $K''\cup L''$. This pair is obtained identifying parts of the boundary of  $(N'\smallsetminus B', F' \cap ( N'\smallsetminus B'))$ and $(N'',\DD''\cup F''$).}
\label{fig:main}
\end{figure}

\subsection{Proof of Theorem~\ref{t:main'}}\label{proof:main}

 The diagram in Figure~\ref{fig:main} might be useful to follow the details.
Let $(N',D'\cup F')$ be the pair
constructed in Lemma~\ref{l:disc} for the link $K'\cup L'\subset\Ss'$
and fix a small tubular neighborhood
$B'\cong D'\times B^2$
of $D'$ in $N'$. Since $(\upsilon',\upsilon'')\neq (1,1)$, we can repeat the arguments
in
the proof of
Lemma~\ref{cable} involving Wall's theorem to obtain
\begin{equation}\label{e:complement}
\sigma_{K'\cup L'}(\upsilon'',\vect\omega')= \sign^{(\upsilon'',\vect\omega')}
 (N'\smallsetminus B', F' \cap ( N'\smallsetminus B')) - \sign(N'\smallsetminus B').
\end{equation}
%

By
construction, the
surface $(D'\cup F')\cap B'$ consists of
the disk $D'$ and a union of $m'$ parallel disks transversal to $D'$
(those
coming from $F'$).
Consider now a pair $(N'',D''\cup F'')$ given by
Lemma~\ref{l:disc} for $K''\cup L''\subset\Ss''$.
Replace $K''$ with $m'$ parallel copies,
{
(with the orientations coherent with the signs
of the intersection points of $D'$ and $F'$) to obtain a
$(m'+\mu'')$-colored link $\KK''\cup L''\subset\Ss''$,
to which we}
assign the character
$(\omega',\vect\omega'')\colon H_1(\Ss \smallsetminus (\KK'' \cup L'')) \to \Cc^*$.
In a similar way,
replace the disk $D''$ with $m'$ parallel
copies to obtain a pair $(N'',\DD''\cup F'')$.
We may assume that
the disks constituting~$\DD''$ lie in a small neighborhood
$B''\cong D''\times B^{2}$,
and we color the components of $\DD''$ in
{ accordance with the colors of}
the $m'$ parallel disks coming from the surface $F'$ in $(D'\cup F')\cap B'$.

In the boundary of $B''$, we obtain a generalized Hopf link $H_{m',m''}$ (up to
orientation of the components, cf.\ Section~\ref{s:associated})
carrying the character $(\omega',\omega'')$.
Lemma~\ref{cable} applied to the
$(m',0)$-cabling of $K''\cup L''$ along~$K''$ yields
\begin{multline}\label{e:copies}
\sigma_{\KK''\cup L''}(\omega',\vect\omega'')
 =\sign^{(\omega',\vect\omega'')}(N'',\DD''\cup F'')-\sign(N'')\\
 =\sigma_{K''\cup L''}(\upsilon',\vect\omega'')
   -\delta_{\lambda'}(\omega')\delta_{\lambda''}(\omega'').
 \end{multline}
%
{(For the last term, Lemma~\ref{lem.new} is applied twice, first
to~$\omega'$, then to~$\omega''$.)}
Now, let us look at the pair $(N,F)$ obtained as the gluing
\begin{equation}\label{e:glue}
(N'\smallsetminus B', F' \cap ( N'\smallsetminus B'))\cup(N'',\DD''\cup F''),
\end{equation}
%
with the solid torus $\TN'=D'\times\partial B^{2}$ in the boundary of
$B'$
identified with the solid torus
$\TN''=\partial D''\times B^{2}$,
which is a tubular neighborhood $\TN(K'')$ of $K''$ in
$\Ss''$. The identification is made in such a way that the disk
$D'\subset\TN'$ is glued to $B^{2}\subset\TN''$. Moreover, the
$m'$ disks removed from the surfaces $F'$ in the intersection
$F' \cap ( N'\smallsetminus B')$ are filled with the corresponding
$m'$ disks constituting~$\DD''$. Notice that the boundary of
$(N,F)$ is nothing but $(\Ss,L)$, i.e., the splice in question.
Furthermore, by the construction, the pair
$(N,F)$ can be used to compute the colored signature of $(\Ss,L)$,
that is,
$$
\sigma_{L}(\vect\omega',\vect\omega'')=\sign^{(\vect\omega',\vect\omega'')}(N,F)-\sign(N).
$$
To complete
the proof we shall study the behavior of the twisted and classical
signatures of $N$ with respect to the decomposition \eqref{e:glue}. By
Theorem~\ref{additivity}, the signatures will be additive with respect to this
decomposition if at least two of the kernels $A_{0},A_{1},A_{2}$ coincide in
the classical and in the twisted version. In the classical version, we are
dealing with the group $H_{1}(\partial\TN')$, generated by
$m_{K'}=\ell_{K''}$ and $\ell_{K'}=m_{K''}$, the meridian and longitude of
$K'$ and $K''$ which are identified in \eqref{e:glue}. It is clear that the
kernels of the inclusion of $H_{1}(\partial\TN')$ into both
$H_{1}(\Ss'\smallsetminus\inte\TN(K'))$ and $H_{1}(\TN(K''))$ are generated
by $\ell_{K'}=m_{K''}$, and thus, by Wall's theorem we have
\begin{equation}\label{e:d}
\sign(N)=\sign(N'\smallsetminus B')+\sign(N'').
\end{equation}
In the twisted version,
the space
$H_{1}^{(\vect\omega',\vect\omega'')}(\partial\TN')
 =H_{1}^{(\upsilon',\upsilon'')}(\partial\TN')$
vanishes due to Lemma~\ref{lem:torus} and the assumption
$(\upsilon',\upsilon'')\neq (1,1)$. Hence,
Theorem~\ref{additivity} yields
\begin{equation}\label{e:t}
\sign^{(\vect\omega',\vect\omega'')}(N,F)=\sign^{(\upsilon'',\vect\omega')}(N'\smallsetminus B', F' \cap ( N'\smallsetminus B'))+\sign^{(\omega',\vect\omega'')}(N'',\DD''\cup F'').
\end{equation}
Putting these equations together, we obtain
\def\eqbox#1{\setbox0\hbox{$\scriptstyle\eqref{e:d},\eqref{e:t}$}%
 \hbox to\wd0{\hss$\scriptstyle#1$\hss}}
\begin{align*}
\sigma_{L}(\vect\omega',\vect\omega'')
&\mathrel{\smash{\underset{\eqbox{\eqref{e:d},\eqref{e:t}}}{=}}}
\sign^{(\upsilon'',\vect\omega')}(N'\smallsetminus B',F'\cap( N'\smallsetminus B'))
 +\sign^{(\omega',\vect\omega'')}(N'',\DD''\cup F'')\\
 &\eqbox{}\qquad{}-\sign(N'\smallsetminus B')-\sign(N'')\\
&\underset{\eqbox{\eqref{e:complement},\eqref{e:copies}}}{=}
 \sigma_{K'\cup L'}(\upsilon'',\vect\omega')
 +\sigma_{K''\cup L''}(\upsilon',\vect\omega'')
  -\delta_{\lambda'}(\omega')\delta_{\lambda''}(\omega'').
\pushQED\qed\qedhere
\end{align*}
%

%
\subsection{Proof of Addendum~\ref{add:L=0}}\label{proof:L=0}

Applying Theorem~\ref{t:main'}
to the splice of $K'$ and $K''\cup L''$, we obtain

$$
\sigma_{L}(\vect\omega)
 =\sigma_{K'}(\upsilon'')+\sigma_{K''\cup L''}(1,\vect\omega'')
 -\defect(1)\defect_{\lambda''}(\omega'')
 =\sigma_{K'}(\upsilon'')+\sigma_{L''}(\vect\omega'').
$$
Thus,
it suffices to justify that, in this particular case,
Theorem~\ref{t:main'} holds even if $\upsilon''=1$.

Let $\vrho:=(\upsilon',\upsilon'')$.
In the proof of
Theorem~\ref{t:main'},
the assumption
$\vrho\neq(1,1)$ was only used to establish that the
twisted homology group $H_{1}^\vrho(\partial\TN')$ is
trivial,
yielding~\eqref{e:t}.
If $\vrho=(1,1)$,
this group
is no longer trivial, but we
shall see that \eqref{e:t} still holds if $L'$ is empty.

By Remark~\ref{r:wall}, we only need to show that two among the three
groups
$A^\vrho_{0}$, $A^\vrho_{1}$, and $A^\vrho_{2}$ are equal.
We are dealing with the kernels
of the inclusions
$H_{1}^\vrho(\partial\TN')\to H_{1}^{\vect\omega}(M_{i})$,
where $M_{0}=\Ss'\smallsetminus\inte\TN(K')$,
$M_{1}=\Ss''\smallsetminus\inte\TN(K'')$ and $M_{2}=\TN(K'')$. Since
$\vrho=(1,1)$, the restriction of the covering to
$\partial\TN'$ is trivial and the group
$H_{1}^\vrho(\partial\TN')$ is
generated
by the lifts $\widetilde m_{K'}$ and $\tilde\ell_{K'}$ of the meridian and
longitude of $K'$, which are identified respectively with the longitude and
meridian of $K''$.
While the generators of
$A^\vrho_{1}$
are not evident,
the groups $A^\vrho_{0}$
and $A^\vrho_{2}$ are easily seen to be equal. Indeed,
since $L'$ is empty and $\upsilon''=1$, the group
$H_{1}^{\upsilon''}(\Ss'\smallsetminus\inte\TN(K'))$ is
the homology group of the trivial covering of $M_{0}$, therefore,
$\tilde\ell_{K'}=\widetilde m_{K''}$ generates
$A^\vrho_{0}$.
On the other hand, since $L'$ is empty,
$|\vect\lambda'|=0$ and we do not need to work with parallel
copies of $K''$. It follows that the group $H_{1}^{\upsilon''}(\TN(K''))$ is
the homology group of the trivial covering of $M_{2}$ and
$\widetilde m_{K''}=\tilde\ell_{K'}$ generates
$A^\vrho_{2}$. We conclude
that
$A^\vrho_{0}=A^\vrho_{2}$, completing the proof.
\qed

\section{The generalized Hopf link}\label{s:Hopf}

In this section, we compute the signature of a generalized Hopf link
using the $C$-complex approach of \cite{CF}. This approach works only for
characters with all components distinct from one. Thus, we define
the \emph{open character torus} $\torus^\mu$, obtained from $\CT^\mu$
by removing all ``coordinate planes'' of the form $\omega_i=1$,
$i=1,\ldots,\mu$.

\begin{figure}\centering
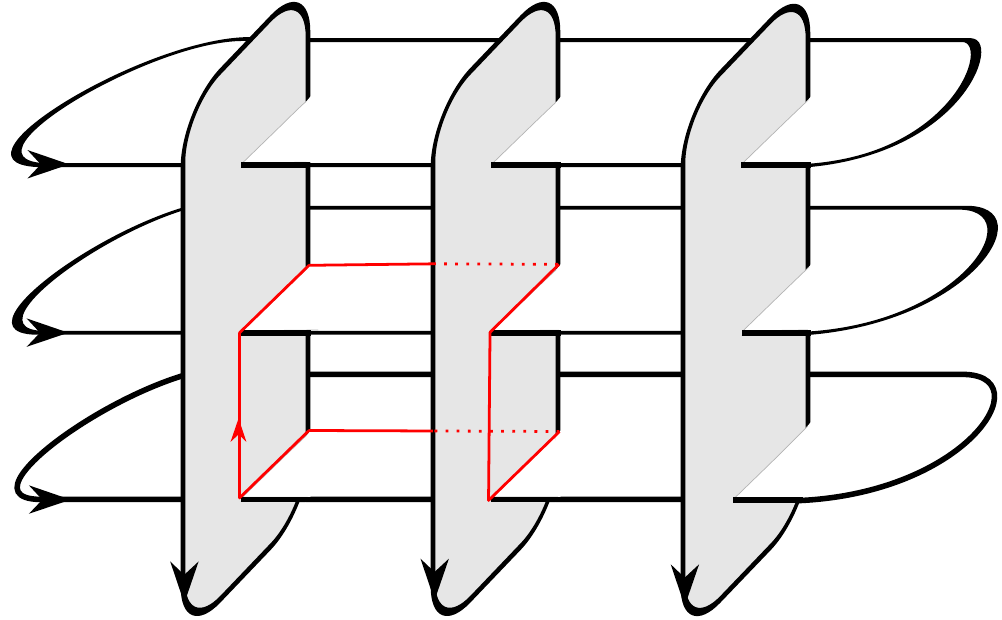
\caption{This diagram represents a generalized Hopf link of type $H_{3,3}$. The link is depicted bounding a bicolored oriented $C$-complex $S$, which is the union of the disks $E_{i}$ and $F_{j}$. The red loop is $\aa_{11}$, whose homology class $\alpha_{11}$ is an element of $H_{1}(S)$.}
\label{fig:hopf}
\end{figure}

\subsection{$C$-complexes and Seifert forms}\label{s:C-complex}

We
recall briefly the notion of $C$-complex of a $\mu$-colored link
and its application to the computation of the signature. To avoid excessive
indexation, we consider the special case of the bivariate signature of a
bicolored link; for the general case and further details, see~\cite{CF}.

Thus, let $K\cup L$ be a bicolored link, with the coloring $K\mapsto1$,
$L\mapsto2$. (We do not assume $K$ or $L$ connected.)
A \emph{$C$-complex} is a pair of
Seifert surfaces~$E$ for~$K$ and~$F$ for~$L$,
possibly disconnected, which intersect each other transversally (in the
stratified sense) and only at \emph{clasps}, i.e., smooth simple arcs, each
connecting a point of~$K$ to a point of~$L$.
(In the general case of more than two colors, the only additional requirement
is that all triple intersections of Seifert surfaces involved must be empty.)
Let $S:=E\cup F$. Then, for each
pair $\epsilon,\delta=\pm1$, one can consider the \emph{Seifert
form}
$$
\theta^{\epsilon\delta}\colon H_1(S)\otimes H_1(S)\to\Zz,
$$
defined as follows. Pick a class $\alpha\in H_1(S)$ and represent it by a
simple closed curve $\aa\subset S$ satisfying the following condition:
each clasp
$\cc\subset E\cap F$ is either disjoint from~$\aa$ or
entirely contained in~$\aa$. It is immediate that such a curve~$\aa$ can be
pushed off~$E$ in the direction~$\epsilon$ (with respect to the coorientation
of~$E$, which is part of the structure)
and off~$F$ in the
direction~$\delta$, so that the resulting curve~$\aa'$ is disjoint from~$S$.
Then, for another class $\beta\in H_1(S)$, the value
$\theta^{\epsilon\delta}(\alpha\otimes\beta)$ is the linking coefficient of
the shift~$\aa'$ and a cycle representing~$\beta$.

Now, given a pair of complex units $(\eta,\zeta)\in\torus^2$, consider the
form
\begin{equation}
H(\eta,\zeta):=(1-\bar\eta)(1-\bar\zeta)
\bigl(\theta^{1,1}-\zeta \theta^{1,-1}-\eta \theta^{-1,1}+\eta\zeta \theta^{-1,-1}\bigr).
\label{eq:H}
\end{equation}
  The extensions of $\theta^{\epsilon\delta}$ to
$H_1(S)\otimes\Cc$ are  chosen \emph{sesquilinear}; hence
this form is Hermitian and it has a well-defined
signature.
It computes the signature of $K\cup L$.

\begin{thm}[see~\cite{CF}]\label{t:CF}
The restriction to the open torus $\torus^2$ of the bivariate signature of a
bicolored link $K\cup L$ is given by
$$
\sigma_{K\cup L}\colon(\eta,\zeta)\mapsto\sign H(\eta,\zeta).
$$
\end{thm}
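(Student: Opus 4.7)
The plan is to compute both sides by producing an explicit pair $(N,\widehat S)$, built from the $C$-complex, to which Definition~\ref{sign} can be applied. First, push $S=E\cup F$ slightly into $B^4$ so that $\widehat S=\widehat E\cup\widehat F$ becomes a properly immersed surface in $N:=B^4$ whose only singularities are transverse double points, one per clasp of $S$. This uses the standard trick that a clasp, which is a $1$-dimensional arc of intersection in $S^3$, resolves into a single transverse double point after pushing one sheet slightly into the interior. Then $(N,\widehat S)$ satisfies the hypotheses of Proposition~\ref{p:bounding}, so that
\[
\sigma_{K\cup L}(\eta,\zeta)\;=\;\sign^{(\eta,\zeta)}(B^4,\widehat S)-\sign(B^4)\;=\;\sign^{(\eta,\zeta)}(B^4,\widehat S).
\]

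Next, I would compute the twisted signature using a branched cover. Let $q_1,q_2$ be the orders of $\eta,\zeta$; by Remark~\ref{rem:G} we may take $G:=\Z/q_1\times\Z/q_2$ and work with the associated branched cover $\widetilde N\to N$. Decompose $N=U\cup V$, where $U$ is a regular neighborhood of $\widehat S$ and $V=N\smallsetminus\mathring U$, and pull back to the cover. Since $B^4$ is contractible and $U$ deformation-retracts onto $\widehat S\simeq S$, a Mayer--Vietoris argument on the cover identifies $H_2(\widetilde N;\C)$ with a quotient of $H_1(S;\C)\otimes\C[G]$. Extracting the $(\eta,\zeta)$-eigen-summand, a class $\alpha\in H_1(S;\C)$ lifts to a $2$-chain $\widetilde\alpha$ whose boundary is killed in the eigenspace only after multiplication by $(1-\bar\eta)(1-\bar\zeta)$, the factor recording the nontrivial monodromy across $\widehat E$ and $\widehat F$; this accounts for the prefactor in~\eqref{eq:H}. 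The intersection number $\langle\widetilde\alpha,\widetilde\beta\rangle$ in $\widetilde N$ is then computed by representing $\alpha,\beta$ by curves $\aa,\bb\subset S$ compatible with each clasp (each clasp is either avoided or fully contained), pushing them into the four orthants cut out by $\widehat E,\widehat F$, and evaluating the resulting linking in $S^3$. This produces precisely the four Seifert contributions $\theta^{\epsilon\delta}(\alpha,\beta)$ weighted by the corresponding powers $\eta^{(1-\epsilon)/2}\zeta^{(1-\delta)/2}$, reproducing the bracketed expression in~\eqref{eq:H}.

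The main obstacle is the bookkeeping in the Mayer--Vietoris step: matching orientations of the sheets of $\widetilde N$ around $\widehat E$ and $\widehat F$, tracking how chain-level lifts of cycles of $S$ cross the clasps (which is why the curves in the definition of $\theta^{\epsilon\delta}$ must be chosen to either avoid or entirely contain each clasp), and verifying that degenerate contributions at the double points of $\widehat S$ cancel. A secondary issue is the passage from the $(\eta,\zeta)$-equitypical summand in $H_2(\widetilde N;\C)$ to the twisted homology $H_2^{(\eta,\zeta)}(N,\widehat S)$: by Remark~\ref{rem:G} these carry proportional Hermitian forms, so signatures coincide, but one must verify that no nontrivial kernel is introduced in the identification with $H_1(S;\C)$ beyond what is already visible on the $C$-complex side (i.e.\ that the bounding relations on both sides match). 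Once these geometric conventions are consistent, the identification of $H(\eta,\zeta)$ with the restricted twisted intersection form is immediate, and equality of signatures follows from Definition~\ref{sign}.
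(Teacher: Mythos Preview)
The paper does not supply its own proof of this statement: it is imported wholesale from \cite{CF}, and the Remark immediately following the theorem merely records that in \cite{CF} the right-hand side is taken as the \emph{definition} of the signature, with the equivalence to the $4$-manifold definition established in \cite[Section~6.2]{CF}. So there is nothing to compare against beyond that citation.

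Your outline is in fact the strategy carried out in \cite[Section~6.2]{CF}: push the $C$-complex into $B^4$ so that each clasp becomes a transverse double point, observe that the resulting pair $(B^4,\widehat S)$ satisfies the hypotheses of Proposition~\ref{p:bounding}, and identify the $(\eta,\zeta)$-eigenspace of the intersection form on the branched cover with the form $H(\eta,\zeta)$ on $H_1(S;\C)$. At the level of approach you are on the right track, and your heuristic for the prefactor $(1-\bar\eta)(1-\bar\zeta)$ and the four push-off directions is the correct one.

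What you have written, however, is a programme rather than a proof, and you say so yourself. The two points you flag as ``obstacles'' are precisely where the content lies. The assertion that a class $\alpha\in H_1(S)$ lifts to a $2$-chain in the cover whose boundary is killed by $(1-\bar\eta)(1-\bar\zeta)$ requires an explicit construction of that chain and a check of how it interacts with the branch locus near the double points of $\widehat S$; this does not fall out of a formal Mayer--Vietoris argument, and the sentence ``once these geometric conventions are consistent, the identification \ldots\ is immediate'' is exactly the step that \cite{CF} spends several pages on. Likewise, the control of the kernel (your ``secondary issue'') is needed to conclude equality of signatures rather than merely a relation modulo nullities. If you intend this to stand on its own, you must either carry out that computation in detail or cite \cite[Section~6.2]{CF} and be done with it, as the present paper does.
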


\begin{rmk}
Strictly
speaking, the statement of Theorem~\ref{t:CF} is the \emph{definition} of
signature in~\cite{CF}.
This
definition is equivalent to the
conventional one, see \cite[Section 6.2]{CF}.
\end{rmk}

In general, for a $\mu$-colored link~$L$, one should consider a
$\mu$-component $C$-complex~$S$ and all $2^\mu$ possible shift directions,
arriving at a Hermitian form $H(\omega)$, $\omega\in\torus^\mu$, computing
the signature $\sigma_L(\omega)$.
The \emph{nullity} $\nl_L(\omega):=\nl H(\omega)$ is also an
invariant of~$L$; it is given by the following theorem.

\begin{thm}[{see \cite[Theorem 6.1]{CF}}]\label{th.nullity}
For any character
$\omega\in\torus^\mu$ in the open character torus,
one has $\nl_L(\omega)=\dim H_1^\omega(S^3\smallsetminus L)$.
\end{thm}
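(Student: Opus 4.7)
The plan is to realise the Hermitian form $H(\omega)$ as arising from a Mayer--Vietoris sequence for the $\omega$-twisted homology of $X := S^3\smallsetminus L$, using the $C$-complex $S$ as a cut locus, and then to identify $H_1^\omega(X)$ directly with the kernel of $H(\omega)$, giving $\dim H_1^\omega(X) = \nl H(\omega) = \nl_L(\omega)$.

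First, I would take a regular neighbourhood $N(S)$ of $S$ in $S^3$ and set $Y := \overline{X\smallsetminus N(S)}$. Away from the clasps, $N(S)$ is locally $F_i\times[-1,1]$; near each clasp $c\subset F_i\cap F_j$ it carries the local model of a ``4-page book'' crossed with an interval. The decomposition $X=N(S)\cup Y$, glued along $\partial N(S)\cap Y$, pulls back to the $\ZZ^\mu$-cover $\widetilde X\to X$ defined by the coloring homomorphism $c\colon H_1(X)\onto\ZZ^\mu$; tensoring the corresponding Mayer--Vietoris sequence with the $\ZZ[\ZZ^\mu]$-module $\Cc_\omega$ produces a long exact sequence in $\omega$-twisted homology.

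Next, I would compute each term. The $2^\mu$ sheets of $\widetilde{N(S)}$ incident to a generic point of the regular part of $S$ correspond to the choices of pushoff $\epsilon\in\{\pm1\}^\mu$. For a class $\alpha\in H_1(S)$ represented by a curve satisfying the clasp condition of Section~\ref{s:C-complex}, the various lifts of its pushoffs to the cover encode the Seifert forms $\theta^\epsilon$ via linking computations in $\widetilde X$. The goal is to check that the connecting map of the Mayer--Vietoris sequence, restricted to the $H_1(S)\otimes\Cc$ summand coming from $N(S)$, is (up to sign) the $\Cc$-linear map with matrix
$$
\sum_{\epsilon\in\{\pm1\}^\mu}\Bigl(\prod_{i=1}^\mu(-\omega_i)^{(1-\epsilon_i)/2}\Bigr)\theta^\epsilon,
$$
and that the factors $\prod_i(1-\bar\omega_i)$ arise from the twisted chain complex of the intersection $N(S)\cap Y$ (namely, from the tangential circles transverse to each $F_i$, whose twisted homology cancels by a factor of $1-\bar\omega_i$). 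Together these reproduce $H(\omega)$ exactly.

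Finally, I would verify that $H_1^\omega(Y)$ vanishes and that the connecting maps to $H_0^\omega(N(S)\cap Y)$ contribute no extra nullity. The assumption $\omega\in\torus^\mu$ enters precisely here: each factor $(1-\bar\omega_i)$ must be nonzero for the cover to be non-trivially twisted in the $i$-th direction and for the relevant boundary pieces to be acyclic in $\omega$-twisted homology. The long exact sequence then collapses to identify $H_1^\omega(X)$ with the kernel of $H(\omega)$ on $H_1(S)\otimes\Cc$, yielding the claim. The main technical obstacle is the combinatorial bookkeeping at the clasps: one must choose a basis of $H_1(S)$ consistent with the enter/exit convention used to define $\theta^\epsilon$ and trace how the sheets of $\widetilde X$ are glued in each 4-page-book neighborhood, which is considerably more delicate than the single-variable Seifert-surface case and is essentially the content of \cite[Theorem~6.1]{CF}.
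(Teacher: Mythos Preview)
The paper does not contain a proof of this statement: Theorem~\ref{th.nullity} is quoted verbatim from \cite[Theorem~6.1]{CF} and used as a black box (only to establish Proposition~\ref{prop.nullity}). So there is no ``paper's own proof'' to compare your attempt against.

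That said, your sketch is a reasonable outline of the argument in~\cite{CF}: one cuts the link exterior along the $C$-complex, computes the twisted homology of the pieces, and identifies the connecting homomorphism with the matrix $\sum_\epsilon\bigl(\prod_i(-\omega_i)^{(1-\epsilon_i)/2}\bigr)\theta^\epsilon$. A few places would need tightening before this becomes an actual proof. The phrase ``$2^\mu$ sheets of $\widetilde{N(S)}$'' is misleading, since the $\ZZ^\mu$-cover is infinite; what you mean is that locally near the regular part of~$S$ the cover splits into translates indexed by $\ZZ^\mu$, and after tensoring with~$\Cc_\omega$ the relevant linear combination of pushoffs is the one you wrote. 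The vanishing of $H_1^\omega(Y)$ is not automatic and is where the hypothesis $\omega_i\ne1$ is really used; in~\cite{CF} this is handled by showing that~$Y$ deformation retracts to a space whose twisted homology is easy to compute. Finally, the clasp bookkeeping you flag at the end is indeed the crux, and your sketch does not actually carry it out; you are right that this is ``essentially the content of \cite[Theorem~6.1]{CF}'', which is precisely why the present paper cites it rather than reproving it.
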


\begin{propo}\label{prop.nullity}
Let $H:=H_{m,n}$ be a generalized Hopf link. Then, for any
$(\eta,\zeta)\in\torus^m\times\torus^n$, one has
$$
\nl_H(\eta,\zeta)=\begin{cases}
m+n-3,&\text{if $\Log\eta\in\Z$ and $\Log\zeta\in\Z$},\\
m-1,&\text{if $\Log\eta\notin\Z$, $\Log\zeta\in\Z$},\\
n-1,&\text{if $\Log\eta\in\Z$, $\Log\zeta\notin\Z$},\\
0,&\text{otherwise}.\\
\end{cases}
$$
\end{propo}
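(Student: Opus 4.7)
The plan is to invoke Theorem~\ref{th.nullity}, reducing the problem to a direct computation of $\dim H_1^\omega(S^3\smallsetminus H_{m,n})$, and to carry it out via twisted Mayer--Vietoris applied to the standard genus-one Heegaard splitting $S^3 = T_V\cup_T T_U$ with $T=S^1_\theta\times S^1_\phi$. Arrange the $V$-components as $m$ parallel cores of $T_V$ and the $U$-components as $n$ parallel cores of $T_U$; then
\[
A := T_V\smallsetminus\textstyle\bigcup V_i \simeq S^1_\theta\times D_m, \qquad
A':= T_U\smallsetminus\textstyle\bigcup U_j \simeq D_n\times S^1_\phi,
\]
where $D_k$ denotes a disk with $k$ interior punctures. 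A short homological calculation shows that $\omega|_T$ sends $[\theta]\mapsto\zeta$ and $[\phi]\mapsto\eta$, where $\eta:=\prod_i\eta_i$ and $\zeta:=\prod_j\zeta_j$; note $\Log\eta\in\Z\Leftrightarrow\eta=1$, and symmetrically for $\zeta$. This is exactly the dichotomy appearing in the four cases of the statement.

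Next, a chain-level computation on the bouquet of $k$ circles gives, provided all defining characters are nontrivial, $H_0^\omega(D_k)=0$ and $H_1^\omega(D_k)=\C^{k-1}$. Combining this with the standard twisted homology of $S^1$ via the twisted Künneth theorem, and using Lemma~\ref{lem:torus} for $T$, I obtain: $H_*^\omega(A)$ is concentrated in degrees $1,2$ with $H_1^\omega(A)=\C^{m-1}$ precisely when $\zeta=1$, and vanishes otherwise; symmetrically for $A'$ in terms of $\eta$; and $H_*^\omega(T)$ equals $H_*(T;\C)$ if $\eta=\zeta=1$, and vanishes otherwise.

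Cases~2, 3, and~4 of the statement follow at once from the Mayer--Vietoris sequence: either all three terms $H_*^\omega(A), H_*^\omega(A'), H_*^\omega(T)$ vanish (case~4, giving $0$), or exactly one of $A,A'$ contributes $\C^{m-1}$ or $\C^{n-1}$ (cases~2, 3). In the remaining case~1 ($\eta=\zeta=1$) the sequence reads
\[
H_1^\omega(T) \xrightarrow{d_1} H_1^\omega(A)\oplus H_1^\omega(A') \to H_1^\omega(X) \to H_0^\omega(T) \to 0
\]
with dimensions $2\to m+n-2\to \dim H_1^\omega(X)\to 1\to 0$, yielding $\dim H_1^\omega(X) = m+n-1-\dim\operatorname{image}(d_1)$, so I must show $\dim\operatorname{image}(d_1)=2$.

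The main obstacle is this injectivity of $d_1$. The two generators $[\theta],[\phi]\in H_1(T;\C)$ include into $A$ and $A'$; under the Künneth decomposition the $S^1_\theta$- and $S^1_\phi$-contributions land in $H_1\otimes H_0$, which vanishes because $H_0^\omega(D_k)=0$. Only the $\partial D_k$-contributions survive, yielding the classes $[\partial D_m]^\omega\in H_1^\omega(D_m)$ and $[\partial D_n]^\omega\in H_1^\omega(D_n)$; in the bouquet chain complex they are represented, using $\prod\eta_i=1$ and $\prod\zeta_j=1$ respectively, by the explicit cycles $(1,\eta_1,\eta_1\eta_2,\ldots,\eta_1\cdots\eta_{m-1})$ and its $\zeta$-analogue, which are manifestly nonzero and land in orthogonal direct summands of $H_1^\omega(A)\oplus H_1^\omega(A')$. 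This gives $\dim\operatorname{image}(d_1)=2$ and completes the proof.
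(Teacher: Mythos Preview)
Your proof is correct and follows essentially the same strategy as the paper's: both reduce via Theorem~\ref{th.nullity} to a twisted Mayer--Vietoris computation for the decomposition of $S^3\smallsetminus H_{m,n}$ into two pieces homotopy equivalent to $S^1\times D_m$ and $S^1\times D_n$ glued along a torus, invoking Lemma~\ref{lem:torus} for the intersection. The only cosmetic difference is that the paper obtains this decomposition by viewing $H_{m,n}$ as the splice of $H_{1,m}$ and $H_{1,n}$, whereas you use the genus-one Heegaard splitting; these yield the identical pieces, and your explicit verification of the injectivity of $d_1$ in the case $\eta=\zeta=1$ spells out a detail the paper leaves implicit.
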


\begin{proof}
The generalized Hopf
link $H_{m,n}$ can be thought of as the splice of the links
$H_{1,m}$ and $H_{1,n}$.
Since obviously
$S^3\smallsetminus H_{1,m}\cong S^1\times D_m$, where $D_m$ is an
$m$-punctured disk, for any pair $(\upsilon,\eta)\in\CT^1\times\torus^m$ we
have
$$
\dim H_1^{(\upsilon,\eta)}(S^3\smallsetminus H_{1,m})=\begin{cases}
m-1,&\text{if $\upsilon=1$},\\
0,&\text{if $\upsilon\ne1$}.
\end{cases}
$$
A similar relation holds for $H_{1,n}$;
in view of Theorem~\ref{th.nullity},
the statement of the proposition
follows from the Mayer--Vietoris exact sequence,
with Lemma~\ref{lem:torus} taken into account.
\end{proof}

\subsection{Proof of Theorem \ref{t:hopf}}\label{proof:hopf}

Due to Remark~\ref{r:one}, we have
$$
\sigma_{H_{m,n}}(\ldots,1,\ldots,\vect\zeta)=
 \sigma_{H_{m-1,n}}(\ldots,\hat1,\ldots,\vect\zeta),\qquad
\sigma_{H_{m,n}}(\vect\eta,\ldots,1,\ldots)=
 \sigma_{H_{m,n-1}}(\vect\eta,\ldots,\hat1,\ldots).
$$
These formulas agree with the statement of the theorem,
see Lemma~\ref{lem:defect}\iref{defect:one},
and it suffices
to compute the restriction of
$\sigma_{H_{m,n}}$ to the
open character torus $\torus^{m+n}$.

Consider the group $G:=\Zm\times\Zn$.
We will use the cyclic indexing for the components of the link and other
related objects. Let $K_i$, $i\in\Zm$ be the first $m$ parallel components
and $L_j$, $j\in\Zn$, the last $n$ parallel components.

By an obvious semicontinuity argument, for any $\mu$-colored link~$L$, the
multivariate signature $\sigma_L(\omega)$ is constant on
each connected component of each stratum
$\{\omega\in\torus^\mu\,|\,\nl_L(\omega)=\const\}$.
If $L=H_{m,n}$,
the strata are given by Proposition~\ref{prop.nullity}: they
are the hyperplanes
$P_p\times\torus^n$ and $\torus^m\times Q_q$, where
$$
P_p:=\{\vect\eta\in\torus^m\,|\,\Log\vect\eta=p\},\quad
Q_q:=\{\vect\zeta\in\torus^n\,|\,\Log\vect\zeta=q\},\quad
p,q\in\Zz,
$$
and all pairwise intersections thereof.
It is immediate that the bi-diagonal
$\eta_1=\ldots=\eta_m$, $\zeta_1=\ldots=\zeta_n$ meets each
component of each stratum;
hence, it suffices to compute the restriction of the signature
function to this bi-diagonal.
Due to Corollary~\ref{c:coloring},
this is equivalent to computing the
\emph{bivariate} signature
$\tilde\sigma\colon\torus^2\to\Zz$,
of the \emph{bicolored} generalized Hopf link
(with the coloring $K_i\mapsto1$, $L_j\mapsto2$, $(i,j)\in G$),
and
the formula to
be established takes the form
$$
\tilde\sigma(\eta,\zeta)=
 \delta_{[m]}(\eta)\delta_{[n]}(\zeta)=
 \bigl(\ind(m\Log\eta)-m\bigr)\bigl(\ind(n\Log\zeta)-n\bigr),\qquad
 (\eta,\zeta)\in\torus^2.
$$

Consider $m$ disjoint parallel disks~$E_i$ and $n$ disjoint parallel disks
$F_j$, so that $\partial E_i=K_i$, $i\in\Zm$, and $\partial F_j=L_j$,
$j\in\Zn$. We can assume that each component~$L_j$ intersects each disk~$E_i$
at a single point $e_{ij}$, so that these points appear in~$L_j$ in the
cyclic order given by the orientation.
These points cut~$L_j$ into segments
$\ll_{ij}:=[e_{ij},e_{i+1,j}]$, $i\in\Zm$.
Likewise, each component~$K_i$
intersects each disk~$F_j$ at a single point~$f_{ij}$, the points appearing
in~$K_i$ in the cyclic order given by the orientation,
and we will speak about the segments
$\kk_{ij}:=[f_{ij},f_{i,j+1}]\subset K_i$,
$j\in\Zn$.
Finally, assume that the intersection $E_i\cap F_j$ is a segment
$\cc_{ij}:=[e_{ij},f_{ij}]$ (a clasp).
Then,
letting $E:=\bigcup_iE_i$ and $F:=\bigcup_jF_j$,
the union $S:=E\cup F$ is a bicolored $C$-complex for $H_{m,n}$,
and we can apply Theorem~\ref{t:CF}.

\begin{rmk}\label{rem:mn>1}
If $m\le1$ or $n\le1$, then $H_1(S)=0$
and
the signature is trivially zero.
Hence, from now on we can assume that $m,n\ge2$.
Note though that formally this case does agree with the statement of the
theorem, as $\defect\equiv0$ on $\CT^0$ and~$\CT^1$.
\end{rmk}

In each disk~$E_i$, consider a collection of segments (simple arcs)
$\ee_{ij}:=[e_{ij},e_{i,j+1}]$, $j\in\Zn$,
disjoint except the common boundary points
and such that their union is a circle~$C_i$ parallel to $\partial E_i=K_i$
(and the points appear in this circle
in accordance with their cyclic order).
Consider similar segments
$\ff_{ij}:=[f_{ij},f_{i+1,j}]\subset F_j$, $i\in\Zm$,
forming circles $D_j\subset F_j$ parallel to $\partial F_j=L_j$.
Then, the group $H_1(S)$ is generated by the classes~$\alpha_{ij}$
of the loops
$$
\aa_{ij}:=\cc_{ij}\cdot\ff_{ij}\cdot\cc_{i+1,j}\1\cdot\ee_{i+1,j}\cdot
 \cc_{i+1,j+1}\cdot\ff_{i,j+1}\1\cdot\cc_{i,j+1}\1\cdot\ee_{ij}\1,
$$
$(i,j)\in G$, connecting the points
$$
e_{ij}\to f_{ij}\to f_{i+1,j}\to e_{i+1,j}\to e_{i+1,j+1}\to f_{i+1,j+1}
 \to f_{i,j+1}\to e_{i,j+1}\to e_{ij}
$$
(in the order of appearance). The construction is illustrated in Figure \ref{fig:hopf}.
Note that we do not assert that these elements form a basis: they are
linearly dependent. However, we will do the computations in the free abelian
group $\HH:=\bigoplus_{i,j}\Zz\alpha_{ij}$, $(i,j)\in G$; this
change will increase the kernel of the form, but it will not affect the
signature.

The proof of the following lemma is postponed till
Section~\ref{proof:Seifert}.

\begin{lemma}\label{lem:Seifert}
Given $\epsilon,\delta=\pm1$,
the only nontrivial values taken by the Seifert form
$\theta^{\epsilon\delta}$ on the pairs of
generators~$\alpha_{ij}$
are as follows:
$$
\alpha_{ij}\otimes\alpha_{ij}\mapsto-\epsilon\delta,\quad
\alpha_{ij}\otimes\alpha_{i-\epsilon,j}\mapsto\epsilon\delta,\quad
\alpha_{ij}\otimes\alpha_{i,j+\delta}\mapsto\epsilon\delta,\quad
\alpha_{ij}\otimes\alpha_{i-\epsilon,j+\delta}\mapsto-\epsilon\delta,
$$
where $(i,j)\in G$.
\end{lemma}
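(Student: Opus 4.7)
The plan is to carry out a direct linking-number computation using the $C$-complex $S=E\cup F$ constructed above. First, I would establish a localization principle: the loop $\aa_{ij}$, and hence any push-off $\aa'_{ij}$ in the $(\epsilon,\delta)$ direction, lies in an arbitrarily small tubular neighborhood of the four disks $E_i,E_{i+1},F_j,F_{j+1}$ and the four clasps $\cc_{ij},\cc_{i+1,j},\cc_{i,j+1},\cc_{i+1,j+1}$. Consequently, $\lk(\aa'_{ij},\aa_{kl})=0$ unless $\aa_{kl}$ meets this neighborhood; inspecting the supports of the generators, this happens only when $k\in\{i-1,i,i+1\}$ and $l\in\{j-1,j,j+1\}$.

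This reduces the lemma to checking at most nine pairs. For each of them, I would compute $\lk(\aa'_{ij},\aa_{kl})$ as a signed intersection count between $\aa_{kl}$ and a ``plaquette'' $Q_{ij}$ bounded by $\aa'_{ij}$. A natural choice of $Q_{ij}$ is obtained by taking a $2$-chain in $S$ with boundary $\aa_{ij}$ (its four pieces sit in $E_i,E_{i+1},F_j,F_{j+1}$ and are glued along the four corner clasps) and pushing it slightly off $S$ in the chosen direction $(\epsilon,\delta)$. Intersections of $\aa_{kl}$ with $Q_{ij}$ can then only take place in a neighborhood of one of the four corner clasps. A straightforward local model near a single clasp shows that exactly one of the four neighbors $(k,l)\in\{i,i-\epsilon\}\times\{j,j+\delta\}$ contributes at each corner, while the other five potentially neighboring loops avoid $Q_{ij}$ because, with the prescribed signs $(\epsilon,\delta)$, they lie on the wrong side of $E$ or of $F$.

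The local model also fixes the signs. At each contributing corner, the push-off crosses one $E$-disk and one $F$-disk, giving a factor $\epsilon\delta$, while the overall sign is determined by the cyclic orientation of $\aa_{ij}$ and the coorientations of the disks. The self-contribution at the clasp $\cc_{ij}$ yields $\alpha_{ij}\otimes\alpha_{ij}\mapsto-\epsilon\delta$; the corner at $\cc_{i-\epsilon+1,j}$ pairs $\alpha_{ij}$ with $\alpha_{i-\epsilon,j}$ giving $+\epsilon\delta$; the corner at $\cc_{i,j+1+\delta}$ pairs $\alpha_{ij}$ with $\alpha_{i,j+\delta}$ giving $+\epsilon\delta$; and the diagonally opposite corner pairs $\alpha_{ij}$ with $\alpha_{i-\epsilon,j+\delta}$ giving $-\epsilon\delta$. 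Summing these is unnecessary since each pair corresponds to a single corner, so the four values appear uniquely and match the statement.

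The main obstacle is pure orientation bookkeeping: one must simultaneously track the coorientations of $E_i$ and $F_j$, the induced orientations of $K_i=\partial E_i$ and $L_j=\partial F_j$, the cyclic ordering of the puncture points $e_{ij}$ and $f_{ij}$, the direction of traversal of $\aa_{ij}$, and the signs $\epsilon,\delta$. Once a consistent local model is set up near one clasp---say by fixing coordinates so that $E_i$ lies in $\{z=0\}$ and $F_j$ in $\{y=0\}$ with $\cc_{ij}$ on the $x$-axis---the four nonzero entries follow uniformly from this model applied at each corner, and the vanishing of the remaining five neighboring pairs reduces to the observation that the corresponding $\aa_{kl}$ avoids the half-space in which $Q_{ij}$ has been pushed.
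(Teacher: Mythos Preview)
Your plan has a genuine gap at its starting point: the loop $\aa_{ij}$ does \emph{not} bound a $2$-chain in $S$. Indeed, $S$ deformation retracts to the graph $S'$ described in Section~\ref{proof:hopf}, and $\alpha_{ij}$ is precisely one of the (nonzero) generators of $H_1(S)$; if it bounded in $S$, the whole computation would be vacuous. Any four ``plaquette'' pieces sitting in $E_i,E_{i+1},F_j,F_{j+1}$ and glued along the corner clasps will have residual boundary arcs $\kk_{ij},\kk_{i+1,j},\ll_{ij},\ll_{i,j+1}$ on $\partial S$, so their union is not a Seifert surface for $\aa_{ij}$. Worse, if such a $2$-chain in $S$ did exist, pushing it off $S$ in the direction $(\epsilon,\delta)$ would make it disjoint from every $\aa_{kl}\subset S$ and force all Seifert pairings to vanish, contradicting the lemma. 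A second, independent gap is your localization step: the assertion ``$\lk(\aa'_{ij},\aa_{kl})=0$ unless $\aa_{kl}$ meets a tubular neighborhood of $\aa_{ij}$'' is false in general (two linked curves can certainly have disjoint supports), and in fact $\aa_{kl}$ meets that neighborhood whenever $k\in\{i-1,i,i+1\}$ \emph{or} $l\in\{j-1,j,j+1\}$, not \emph{and}, so even the bookkeeping reduction to nine pairs is incorrect.

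The paper's proof avoids both issues by building an explicit Seifert surface $\Pi_{ij}$ out of \emph{three} squares: two lying in $E_i$ and $E_{i+1}$ and a third one lying \emph{outside} $S$, bounded by $\ff_{ij}\cdot\kk_{i+1,j}\cdot\ff_{i,j+1}^{-1}\cdot\kk_{ij}^{-1}$. It is this external piece that produces all the nonzero pairings. The intersection numbers are then read off uniformly by projecting the whole picture to an $m\times n$ grid on a torus (contracting each clasp to a point), where the computation reduces to counting signed crossings between horizontal and vertical edges of adjacent grid cells. Your local-model idea near each clasp could in principle be made to work, but only after replacing the nonexistent $2$-chain in $S$ by a genuine Seifert surface such as $\Pi_{ij}$; at that point the argument essentially becomes the paper's.
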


Consider
the Hermitian inner product $\iif$ on
$\HH\otimes\Cc$ with respect to which $\alpha_{ij}$ is an orthonormal basis,
and use this inner product to identify operators
$A\colon\HH\otimes\Cc\to\HH\otimes\Cc$ and sesquilinear forms
$\alpha\otimes\beta\mapsto\<\alpha A,\beta\>$.
(In accordance with the contemporary right group action conventions, our
matrices act on \emph{row} vectors by the \emph{right} multiplication.)
Then, in order to complete the proof, we need to find the eigenvalues of the
self-adjoint operator $H(\eta,\zeta)$ as in~\eqref{eq:H}.
To this end, consider the unitary
representation $\rho\colon G\to U(\HH\otimes\Cc)$
given by the index shifts of
the basis elements, viz.
$$
\rho(p,q)\colon\alpha_{ij}\mapsto\alpha_{i+p,j+q},\qquad
 (p,q), (i,j)\in G.
$$
This is the regular representation of~$G$, and its equitypical summands are
all of dimension one;
letting $\xi_k:=\exp(2\pi i/k)$, the
summands are generated by the bi-eigenvectors
$$
v_{ij}:=\frac1{mn}\sum_{(r,s)\in G}\xi_m^{-r}\xi_n^{-s}\alpha_{i+r,i+s},
 \qquad(i,j)\in G,
$$
so that $v_{ij}$ is an eigenvector of $\rho(p,q)$
with the eigenvalue $\xi_m^{pi}\xi_n^{qj}$, $(p,q)\in G$.
It is immediate from Lemma~\ref{lem:Seifert} that all forms
$\theta^{\epsilon\delta}$ are $G$-invariant; hence, they all have
the same eigenvectors $v_{ij}$.
In fact, we have more: using Lemma~\ref{lem:Seifert}, one easily concludes
that
$$
\theta^{\epsilon\delta}=-\epsilon\delta\bigl(\rho(0,0)-\rho(-\epsilon,0)
 -\rho(0,\delta)+\rho(-\epsilon,\delta)\bigr).
$$
Combining this with~\eqref{eq:H} and simplifying,
we see that the spectrum of $H(\eta,\zeta)$
consists of the $mn$ real numbers
$\lambda(\eta,\xi_m^{i})\lambda(\zeta,\bar\xi_n^{j})$,
$(i,j)\in G$,
where
$$
\lambda(x,y):=i(1-\bar x)(1-\bar y)(1-xy).
$$
(Note that $\lambda(x,y)\in\RR$ whenever $|x|=|y|=1$, which we always assume.)
Thus, the signature of $H(\eta,\zeta)$ equals
$\sigma_m(\eta)\sigma_n(\zeta)$,
where $\sigma_k(x)$
stands for the ``signature'' of
the sequence of real numbers $\lambda(x,\xi_k^i)$, $i\in\ZZ/k$.

To compute $\sigma_k(x)$, we make the following simple observations (where
$x,y\in\Cc$ are complex units, $|x|=|y|=1$):
\begin{enumerate}
\item\label{i:Hopf.1}
$\lambda(x,1)=\lambda(1,x)=\lambda(x,\bar x)=0$;
\item\label{i:Hopf.2}
$\lambda(x,-1)=\lambda(-1,x)=-4\operatorname{Im}x$;
\item\label{i:Hopf.3}
with $y\ne1$ fixed, the function $\lambda(x,y)$, $x\ne1$,
changes sign only at $x=\bar y$.
\end{enumerate}
Combining items~\ref{i:Hopf.2} and~\ref{i:Hopf.3}, we see that
$\sg\lambda(x,y)=\sg(\Log x+\Log y-1)$ for all $x,y\ne1$.
Consider the function $\phi\colon(0,1)\to\ZZ$,
$t\mapsto\sigma_k(\exp(2\pi it))$.
It follows that $\phi$ is locally constant at each point $t\in(0,1)$
such that $kt\notin\ZZ$,
whereas $\phi(t+0)-\phi(t)=\phi(t)-\phi(t-0)=1$ for $kt\in\ZZ$.
Together with the normalization $\phi(\frac12)=\sigma_k(-1)=0$ given by
items~\ref{i:Hopf.1} and~\ref{i:Hopf.2} above, we have $\phi(t)=\ind(kt)-k$.
In other words,
$\sigma_k(x)=\ind(k\Log x)-k$,
which concludes the proof of the theorem.
\qed

\subsection{Proof of Lemma~\ref{lem:Seifert}}\label{proof:Seifert}
We keep the notation introduced in Section~\ref{proof:hopf}.

The cycles~$\aa_{ij}$
do satisfy the conditions imposed in
the definition of~$\theta^{\epsilon,\delta}$, see Section~\ref{s:C-complex}.
To compute the linking coefficients,
we consider the $\Pi$-shaped Seifert surface
$\Pi_{ij}$ for $\aa_{ij}$ composed of three ``squares'':
\begin{itemize}
\item
one in~$E_i$, bounded by the loop
$\cc_{ij}\cdot\kk_{ij}\cdot\cc_{i,j+1}\1\cdot\ee_{ij}\1$,
\item
one in~$E_{i+1}$, bounded by the loop
$\ee_{i+1,j}\cdot\cc_{i+1,j+1}\cdot\kk_{i+1,j}\1\cdot\cc_{i+1,j}\1$, and
\item
one bounded by
$\ff_{ij}\cdot\kk_{i+1,j}\cdot\ff_{i,j+1}\1\cdot\kk_{ij}\1$,
disjoint from~$S$ except the boundary.
\end{itemize}
Shift this surface together with the cycle, first off~$S$ in the direction
$(\epsilon,\delta)$, and then "towards the reader", in the direction of the
clasp~$\cc_{ij}$, i.e., from~$e_{ij}$ to $f_{ij}$.
Then, the intersection index of the shift~$\Pi'_{ij}$ and
another cycle $\aa_{pq}$, $(p,q)\in G$,
is easily seen geometrically; below, we give a
simple visual description of the result.

Observe
that all cycles $\aa_{ij}$ lie in the graph
$S':=\bigcup_iC_i\cup\bigcup_jD_j\cup\bigcup_{i,j}\cc_{ij}$.
Contracting each clasp to a point, we project~$S'$ to an $(m\times n)$-grid
in  the torus~$T^2$, identified with $D_0\times C_0$.
The cycle $\aa_{ij}$ projects to the boundary $\partial s_{ij}$
of the $(i,j)$-th cell~$s_{ij}$ of the grid. (This cell can be visualized as
the projection of the third square, the one not contained in~$E$,
in the description of $\Pi_{ij}$; the two other squares collapse to the two
 horizontal edges of~$s_{ij}$.)
Let $s_{ij}'\subset T^2$ be a small shift of~$s_{ij}$
off the grid in the direction $(\epsilon,\delta)$. Then one has
$$
\Pi_{ij}'\circ_{S^3}\aa_{pq}=
 -\operatorname{vert}s_{pq}\circ_{T^2}\operatorname{hor}s'_{ij},
$$
where $\operatorname{vert}s$ stands for the
sum of the two ``vertical'' edges in~$\partial s$, $\operatorname{hor}s$ stands for the sum of the two ``horizontal'' edges in the boundary of a (shifted) cell~$s$ (with their boundary orientation),
and the
second intersection index is in the torus~$T^2$, oriented so that
the projection of each cycle~$\aa_{ij}$ is the \emph{positive} boundary
of the respective cell~$s_{ij}$.
From here, the statement of the lemma is immediate.
\qed

\bibliographystyle{amsalpha}
\bibliography{bibliosplice}

\end{document}